\documentclass{amsart}
\usepackage{amsmath,amssymb}
\usepackage{latexsym,amsthm,epsf,graphicx,graphics,psfrag,amscd,subfigure}
\usepackage[all]{xy}


\theoremstyle{plain}
\newtheorem{theorem}{Theorem}
\newtheorem{lemma}[theorem]{Lemma}
\newtheorem{proposition}[theorem]{Proposition}
\newtheorem{corollary}[theorem]{Corollary}
\newtheorem{claim}[theorem]{Claim}
\newtheorem{conjecture}[theorem]{Conjecture}

\newtheorem*{problem*}{Problem}
\newtheorem*{conjecture*}{Conjecture}
\newtheorem*{proposition*}{Proposition}
\newtheorem*{corollary*}{Corollary}
\newtheorem*{no label}{}

\theoremstyle{definition}
\newtheorem{definition}[theorem]{Definition}

\theoremstyle{remark}
\newtheorem{obs}[theorem]{Remark}

\numberwithin{theorem}{section}



\def\cal#1{\mathcal{#1}}
\def\Id{\textrm{Id}}

\def\inter#1#2#3{{#1}\bullet({#2},{#3})}

\def\figurefontsize{9}


%
%

\begin{document}

\title{Tightness and efficiency of irreducible automorphisms of handlebodies II}
\author{Leonardo Navarro Carvalho}
\address{Departamento de Matem\'atica Aplicada, IME---Universidade Federal Fluminense, Niter\'oi, RJ, Brazil.}
\email{leonardoc@id.uff.br}

\begin{abstract}
We finish proving that an irreducible automorphism $f$ of a handlebody is efficient if, and only if, a certain standard pair of dual $f$--invariant laminations have the geometric tightness property. In a previous paper it was proved that this tightness property implies efficiency. We now prove the converse.
\end{abstract}



\keywords{Automorphism, Mapping Class, Handlebody, pseudo--Anosov, Lamination, Irreducible.}

\maketitle

\section{Introduction}\label{S:intro}

\subsection{The problem and its context}\label{SS:intro:context}

The problem of classifying automorphisms of compact $3$--manifolds, up to isotopy, is the general motivation of this
paper. It is part of an ongoing joint project with Ulrich Oertel aiming at such a classification
\cite{UO:Autos,LC:tightness,LCUO:Classification,UO:MCG_compression}, in a sense similar to that of Nielsen-Thurston's
classification of automorphisms of compact surfaces. Here an {\em automorphism} means a self-homeomorphism (or
diffeomorphism, PL--isomorphism, depending on the category considered). In this
paper we are concerned with the case of automorphisms of handlebodies. In fact we are concerned only with {\em
irreducible}  automorphisms of handlebodies, also known as {\em generic} automorphisms, see Definition \ref{D:irreducible} or any of
\cite{UO:Autos,LC:tightness,LCUO:Classification}.  The current paper may be regarded as a follow-up to \cite{LC:tightness}.

Before stating our main result we describe its context. Given a handlebody $H$ and an orientation preserving
automorphism $f\colon H\to H$ we consider its isotopy class $[f]$, which we may refer to as $[f]\colon H\to H$.  Oertel
classifies these isotopy classes (``mapping classes'') as 1) {\em periodic}, 2) {\em reducible} or 3) {\em irreducible}
\cite{UO:Autos}, see also Subsection \ref{SS:intro:terminology} (Oertel refers to the third case as {\em generic}).  If an isotopy class of an automorphism $f\colon H\to H$ is irreducible (periodic, reducible), we will also say that $f$ is irreducible (periodic, reducible).

Given an irreducible mapping class $[f]$ of a handlebody $H$ our goal is to find a representative $f\colon H\to H$ which
is ``the best'', in some meaningful sense. Oertel \cite{UO:Autos} shows that such an automorphism will have many
similarities with a surface pseudo-Anosov diffeomorphism: he constructs a dual pair of $f$--invariant and mutually transverse measured
laminations of $\textrm{Int}(H)$ --- a $2$--dimensional $(\Lambda,\mu)$ and $1$--dimensional $(\Omega,\nu)$, the leaves of the first one being open discs, the last one with isolated
singularities.\footnote{The need for singularities comes from regarding $\Omega\subseteq \mathring{H}$
embedded. In fact Oertel's construction yields more: a non-singular abstract lamination $\Omega'$ and immersion
$i\colon \Omega'\to\mathring{H}$, $i(\Omega')=\Omega$. For the purpose of this paper all the needed information can be
extracted from its image $\Omega$.} Moreover, these laminations ``fill'' $H$ (in the sense of capturing its topology)
and there exists a scalar $\lambda>1$ such that $f(\Lambda,\mu)=(\Lambda,\lambda\mu)$,
$f(\Omega,\nu)=(\Omega,\lambda^{-1}\nu)$.

The pair $(\Lambda,\mu)$ and $(\Omega,\nu)$ of $f$--invariant measured laminations  --- and the associated {\em growth
rate} $\lambda>1$ ---
are not at all unique. When we propose a quest for ``the best'' representative or ``a best" representative $f\colon H\to H$, we expect to
characterize it
in terms of properties of its invariant laminations and growth rate. The growth $\lambda$ can be regarded as a
measure of the complexity of a representative,  so a best representative should realize the minimal
$\lambda$. Such a representative realizing minimal growth for a pair of invariant measured laminations $(\Lambda,\mu)$, $(\Omega,\nu)$, which always exists, is called {\em efficient}. The problem we address in this paper is:

\begin{problem*} Characterize efficiency of an irreducible handlebody automorphism $f$ in terms of geometric properties of the associated $f$--invariant laminations $(\Lambda,\mu)$, $(\Omega,\nu)$.
\end{problem*}

Oertel identifies a condition on the laminations which he proves to be necessary for efficiency. We say that the pair of
laminations $(\Lambda,\Omega)$ has the {\em incompressibility property} when the inclusion $(\Lambda-\Omega)\to
(H-\Omega)$ is $\pi_1$--injective on each leaf (of $\Lambda-\Omega$). While efficiency implies that the laminations have
this property \cite{UO:Autos} the converse is not true \cite{LC:tightness}. A handlebody analogue to the ``no
back-tracking'' condition of Bestvina and Handel \cite{BH:Tracks,BH:Surfaces} is easily seen to imply efficiency but it is not
realizable on every mapping class. In \cite{LC:tightness} we introduced the notion of ``tight'' invariant
laminations. In the following we assume that $(\Lambda,\mu)$, $(\Omega,\nu)$ are the invariant laminations for $f\colon
H\to H$ obtained according to \cite{UO:Autos}. If $\Delta\subseteq H$ is an immersed surface transverse to $\Omega$ for the moment we denote by $\nu(\Delta)$ the $\nu$--measure of $\Delta$ (eventually in the paper, due to the technical context, we will denote $\nu(\Delta)$ by $\inter{\Delta}{\Omega}{\nu}$).

\begin{definition}\label{D:tight}
The pair of measured laminations $\big((\Lambda,\mu),(\Omega,\nu)\big)$ is {\em tight} if given any leaf $L\in\Lambda$
(which is an open disc) and simple closed curve $\gamma\subseteq L$ the disc $\Delta'\subseteq L$ bounded by $\gamma$
has the property that $\nu(\Delta')\leq\nu(\Delta)$ for any $\Delta\subseteq H$ transverse to $\Omega$ with
$\partial\Delta=\gamma$. We may omit the measures and say that $(\Lambda,\Omega)$ is tight, or abusing notation even
more, that $\Lambda$, or $\Omega$, is tight.

By extension we may say that a representative $f$ is {\em tight} when it has $f$--invariant pair of laminations which is tight.

When $(\Lambda,\Omega)$ is not tight a disc $\Delta$ as above such that $\nu(\Delta')>\nu(\Delta)$ is called a {\em tightening disc}.
\end{definition}

Tightness is weaker than ``no back-tracking'' and also implies efficiency \cite{LC:tightness}. It is also stronger than
the incompressibility condition. It remains to verify that the tightness condition is realizable on every mapping class
to prove the following conjecture, posed in \cite{LC:tightness}:

\begin{conjecture}\label{Cj:tightness}[Tightness]
Tightness characterizes efficiency. More precisely, an irreducible automorphism $f\colon H\to H$ is efficient if and
only if the associated pair of $f$--invariant laminations is tight.
\end{conjecture}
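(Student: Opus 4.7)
Since the implication from tightness to efficiency was established in \cite{LC:tightness}, only the converse remains. I prove it by contrapositive: starting from an invariant pair $((\Lambda,\mu),(\Omega,\nu))$ for $f$ that admits a tightening disc, I will produce a representative $\tilde f\in[f]$ whose associated invariant laminations have strictly smaller growth rate, contradicting efficiency.

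Fix a leaf $L\in\Lambda$, a simple closed curve $\gamma\subset L$ bounding a sub-disc $\Delta'\subset L$, and a tightening disc $\Delta\subset H$ transverse to $\Omega$ with $\partial\Delta=\gamma$ and $\nu(\Delta)<\nu(\Delta')$ as in Definition~\ref{D:tight}. First I propagate this single tightening to $f$-equivariant data: because $f(\Lambda,\mu)=(\Lambda,\lambda\mu)$ and $f(\Omega,\nu)=(\Omega,\lambda^{-1}\nu)$, each iterate $f^n(\Delta)$ is a tightening disc for $f^n(\Delta')\subset f^n(L)$, the $\nu$-measures of both sides scaling by the same power of $\lambda^{-1}$ so that the strict inequality persists at every scale. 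The central construction is then a \emph{tightening surgery}: after a small general-position perturbation, $\Delta\cup\Delta'$ is a sphere in $H$, which by irreducibility of the handlebody bounds a $3$-ball $B$. I use $B$ as a guide to push $\Delta'$, together with a saturated transverse-product neighborhood of it in $\Lambda$, across onto $\Delta$; propagating this replacement along the holonomy of $\Lambda$ and equivariantly across the orbit $\{f^n(\Delta)\}_{n\in\mathbb{Z}}$ yields a new measured lamination $(\tilde\Lambda,\tilde\mu)$. Conjugating $f$ by the ambient isotopy that realises the surgery defines $\tilde f\in[f]$ with $\tilde f(\tilde\Lambda)=\tilde\Lambda$, and a compatible dual $(\tilde\Omega,\tilde\nu)$ is obtained by restricting $(\Omega,\nu)$ to the complement of the surgery region and capping where necessary.

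The growth-rate comparison is then forced. By construction, every transverse arc that previously met $\Delta'$ now meets $\tilde\Lambda$ in strictly smaller $\tilde\mu$-mass, while arcs disjoint from the surgery support are unchanged. Combined with the invariance equation $\tilde f(\tilde\Lambda,\tilde\mu)=(\tilde\Lambda,\tilde\lambda\tilde\mu)$ and Perron--Frobenius monotonicity for the transition data of any branched surface carrying $\tilde\Lambda$, this gives $\tilde\lambda<\lambda$, contradicting efficiency. The principal obstacle, and where I expect the bulk of the argument to lie, is the surgery step itself: one must verify that the local replacement can be made simultaneously along the entire $f$-orbit of $\Delta$, that it is compatible both with the isolated singularities of $\Omega$ and with the transverse measure on $\Lambda$, and that the resulting limit object is a genuine $\tilde f$-invariant measured lamination rather than a formal collapse. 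Choosing $\Delta$ of minimal $\nu$-measure among all transverse discs bounded by $\gamma$, and performing the surgery inductively on orbits of successively refined tightening data, should control the accumulation of modifications and yield a well-defined limiting $(\tilde\Lambda,\tilde\mu)$.
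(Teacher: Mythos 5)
Your plan conflates two things that behave very differently: modifying the lamination by an ambient isotopy, and modifying the representative $f$ in a way that genuinely lowers the growth rate. You propose to realize the tightening surgery as an ambient isotopy $\psi$ and then set $\tilde f=\psi\circ f\circ\psi^{-1}$. But conjugation preserves the growth rate: if $(\Lambda,\mu)$ is $f$--invariant with $f(\Lambda,\mu)=(\Lambda,\lambda\mu)$, then $(\psi(\Lambda),\psi_*\mu)$ is $\tilde f$--invariant with the identical $\lambda$. The apparent decrease you describe (``every transverse arc that previously met $\Delta'$ now meets $\tilde\Lambda$ in strictly smaller $\tilde\mu$--mass'') is only a relabeling by $\psi$, not a reduction in the eigenvalue; transporting everything by $\psi^{-1}$ shows the full measured structure is isomorphic to the old one. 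Nothing in the proposal ever changes the $f$--orbit dynamics that determine $\lambda$.

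The paper circumvents this by constructing $\hat f=\phi\circ f\circ\varphi$ where the pre-- and post--composed isotopies are \emph{different} and where the new invariant laminations are rebuilt from scratch out of a new admissible disc system $\mathcal{E}'=\mathcal{E}\cup\mathcal{D}$ containing $\Delta$. The decrease in growth then lives in the combinatorics of the incidence matrix $\hat M$, not in any isotopy of $\Lambda$. Getting $\mathcal{E}'$ to be admissible for $\phi\circ f$ takes the bulk of the paper (push--aways, standard incidence moves, stabilization of the nested systems $\mathcal{D}_j$, Propositions \ref{P:main_construction} and \ref{P:stabilized}); and the strict drop in $\lambda$ requires showing the local improvement at the row of $\Delta$ propagates to the irreducible submatrix that actually computes $\lambda(\hat f)$ (Claim \ref{Cl:claim} via Lemma \ref{L:propagation_matrix} and Corollary \ref{C:submatrix}). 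Your ``Perron--Frobenius monotonicity'' line gestures at this but ignores that the irreducible piece of the new transition data need not involve $E_1$ at all, which is exactly what the propagation lemmas are for. In short: besides the well--definedness of the surgery (which you flag), the central missing idea is that you must leave the conjugacy orbit of $f$ in a controlled way and compare the resulting transition matrices; a lamination surgery realized as conjugation cannot, even in principle, change $\lambda$.
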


In this paper we fill this gap, proving the theorem below and thus asserting the conjecture.

\begin{theorem}\label{Th:tightness}
Let $[f]\colon H\to H$ be an irreducible mapping class. If $f\colon H\to H$ is an efficient representative then it is
tight. In particular tightness is realizable in every irreducible mapping class.
\end{theorem}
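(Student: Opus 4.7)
My plan is to prove the contrapositive: if the invariant pair $(\Lambda,\Omega)$ is not tight, then $f$ is not efficient. Suppose $\Delta$ is a tightening disc, so $\partial\Delta=\gamma\subseteq L\in\Lambda$ bounds a sub-disc $\Delta'\subseteq L$ with $\nu(\Delta)<\nu(\Delta')$. I would then construct a representative $g\colon H\to H$ isotopic to $f$ whose growth rate $\lambda_g$ is strictly less than $\lambda$, contradicting efficiency of $f$.

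The first key observation is that the tightening ``slack'' is magnified by the dynamics: since $f(\Omega,\nu)=(\Omega,\lambda^{-1}\nu)$, the images $(f^n(\Delta),f^n(\Delta'))$ form a family of tightening pairs with $\nu(f^n(\Delta'))-\nu(f^n(\Delta))=\lambda^n\bigl(\nu(\Delta')-\nu(\Delta)\bigr)\to\infty$. For large $n$, the leaf disc $f^n(\Delta')$ carries far more $\Omega$-intersection than is topologically necessary, and this excess is the resource to be exploited. Next, I would perform a surgery: after putting the two discs in general position, $\Delta\cup\Delta'$ cobounds a region $B\subseteq H$, and the components of $\Omega\cap B$ can be isotoped so that they cross $\Delta$ rather than $\Delta'$, strictly decreasing the $\nu$-measure recorded on the relevant portion of $\Lambda$. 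Finally, I would promote this local modification to a global isotopy of $f$ and compute the growth rate of the resulting $g$ using the modified transverse measure.

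The main obstacle will be propagating the surgery compatibly with the full $f$-orbit of the tightening disc: a naive surgery at $\Delta$ must be consistent with what happens at every $f^k(\Delta)$, lest the resulting object fail to be a lamination or fail to be $g$-invariant for some $g$ isotopic to $f$. I expect to address this by performing the surgery on a suitable fundamental region relative to the dynamics --- using that sufficiently deep preimages of $\Delta$ become geometrically small and can be absorbed into the standard neighborhood structure of the branched surface carrying $(\Lambda,\Omega)$ --- and then verifying, via a Perron--Frobenius-type argument, that the resulting eigenvalue drops by an amount bounded below by a positive quantity depending on $\nu(\Delta')-\nu(\Delta)$. Additional technical care is required to track the singularities of $\Omega$ and the intersection of $\Lambda$ with $\partial H$ across the surgery region, which is where the bulk of the work should sit.
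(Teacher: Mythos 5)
Your high‑level plan (prove the contrapositive, use the ``slack'' $\nu(\Delta')-\nu(\Delta)>0$ to beat the growth rate, invoke a Perron--Frobenius argument at the end) matches the paper's strategy, but the central mechanism you propose is logically inverted, and you have not identified the actual technical obstacles.

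The main gap is conceptual: you propose to \emph{surger $\Omega$ directly}, isotoping the parts of $\Omega$ meeting the region $B$ bounded by $\Delta\cup\Delta'$ so that they cross $\Delta$ rather than $\Delta'$. But $\Omega=\bigcap_j f^j(H_0)$ is \emph{determined} by the representative $f$ and its handle structure; it is not a free object you can push around and then ``promote to a global isotopy of $f$.'' The logical arrow has to go the other way: one changes $f$ by an isotopy (equivalently, changes the admissible disc system), and the invariant pair $(\Lambda,\Omega)$ changes as a \emph{consequence}. The paper does this by working entirely at the level of the disc system $\mathcal{E}$: one replaces the disc $E_1\in\mathcal{E}$ parallel to $\Delta$ with $\Delta$ itself via a parallelism isotopy $\varphi$, so the tightening move is a change of handle structure. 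In your write-up there is no comparable concrete move whose effect on the incidence matrix you could compute.

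There is a second genuine gap: the hard part of the proof is to make the enlarged system $\mathcal{E}\cup\{\Delta\}$ (and more, the iterates $f^p(\Delta)\cap H_0$) into an \emph{admissible} system, which requires (a) making $f^p(\Delta)\cap H_0$ consist of essential incidence discs --- this needs the standard incidence moves of Section 2 --- and (b) making $f^p(\Delta)$ disjoint from $\Delta$ for all $p\ge1$ --- this needs the push‑away moves of Section 3, together with the crucial minimality choice of $\Delta$ (minimal height, then minimal $\hat\nu$‑intersection) so that push‑aways cannot increase measure. Your remark that ``sufficiently deep preimages of $\Delta$ become geometrically small'' does not correspond to anything in the setup (there is no metric in play) and cannot replace this; what the paper actually uses is finiteness of isotopy classes of $i_0$‑transverse discs plus well‑orderedness of the possible $\hat\nu$‑intersection values to get stabilization (Lemma \ref{L:stable}). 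Finally, even granting the construction, the Perron--Frobenius step is not automatic: the tightening move changes only one row of the incidence matrix, and one must show the strict inequality propagates to every index after passing to an irreducible submatrix (Lemmas \ref{L:propagation_matrix}, \ref{L:first_move}, Corollary \ref{C:submatrix}); this is where the ``magnification by dynamics'' observation you make actually enters, but it has to be implemented at the matrix level, which your sketch omits.
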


We recall that in \cite{LC:tightness} we proved a partial version of the theorem, with some other technical hypotheses. We also proved that tightness yields nice properties:

\begin{corollary*}[3.22 of \cite{LC:tightness}]
If $f$ is tight then the minimal growth rate in the class of any power $\lambda_{\textrm{min}}([f^n])$ is $(\lambda(f))^n$.
\end{corollary*}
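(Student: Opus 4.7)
The plan is to reduce the corollary to the main theorem \ref{Th:tightness} (together with the known implication tight $\Rightarrow$ efficient from \cite{LC:tightness}) by observing that tightness is both a geometric property of the laminations alone and is preserved when passing to powers of $f$. The key identity I would exploit is that if $f(\Lambda,\mu)=(\Lambda,\lambda\mu)$ and $f(\Omega,\nu)=(\Omega,\lambda^{-1}\nu)$, then
\[
f^{n}(\Lambda,\mu)=(\Lambda,\lambda^{n}\mu),\qquad f^{n}(\Omega,\nu)=(\Omega,\lambda^{-n}\nu),
\]
so the very same pair of laminations constructed by Oertel for $f$ is also an invariant pair for $f^{n}$, with growth rate $\lambda^{n}$. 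This immediately gives the upper bound $\lambda_{\textrm{min}}([f^{n}])\le\lambda(f)^{n}$.

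For the matching lower bound I would argue that the pair $(\Lambda,\Omega)$ remains tight when regarded as an $f^{n}$--invariant pair. Tightness, as stated in Definition \ref{D:tight}, is a purely geometric statement comparing $\nu(\Delta')$ with $\nu(\Delta)$ for discs bounded by a curve $\gamma\subseteq L\in\Lambda$; it involves neither the dynamics of $f$ nor the specific normalization of $\nu$. In particular, scaling $\nu$ by any positive constant leaves every such inequality intact, and taking powers of $f$ does not alter either the underlying laminations or the family of transverse discs in $H$. Therefore $(\Lambda,\Omega)$ is tight as an $f^{n}$--invariant pair.

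Applying the tight $\Rightarrow$ efficient direction of Conjecture \ref{Cj:tightness}, proved in \cite{LC:tightness}, to the mapping class $[f^{n}]$ and the tight pair $(\Lambda,\Omega)$ then yields that $f^{n}$ is an efficient representative of $[f^{n}]$. By definition of efficiency, its growth rate is the minimal growth in $[f^{n}]$, so
\[
\lambda_{\textrm{min}}([f^{n}])=\lambda(f^{n})=\lambda(f)^{n},
\]
which completes the proof.

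I do not anticipate a serious obstacle here: once Theorem \ref{Th:tightness} is in hand, the only content is the observation that the tightness condition is scale--invariant in $\nu$ and is a property of the geometric pair $(\Lambda,\Omega)$ rather than of the automorphism. The mildly delicate point, which I would be careful to verify, is that the invariant pair produced by Oertel for $[f^{n}]$ can indeed be taken to be the same $(\Lambda,\Omega)$; this is where irreducibility of $[f]$ (and hence of $[f^{n}]$) is used, so that the Oertel construction applies uniformly and the minimal growth over the mapping class really is computed from this pair.
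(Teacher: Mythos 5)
The paper itself gives no proof of this statement: it is imported verbatim as Corollary 3.22 of \cite{LC:tightness}, and in the present paper it is only used, together with Theorem \ref{Th:tightness}, to deduce Corollary \ref{C:powers}. Judged on its own, your reduction is the natural one and is essentially the argument that cited corollary rests on: the same pair $(\Lambda,\mu)$, $(\Omega,\nu)$ is $f^n$--invariant with growth $\lambda^n$ (the system $\mathcal{E}$ is admissible for $f^n$ with incidence matrix $M^n$, and $\bigcap_{j} f^{nj}(H_0)=\bigcap_{j} f^{j}(H_0)$, so the standard pair for $f^n$ can be taken to be the same one), tightness in the sense of Definition \ref{D:tight} refers only to the measured pair and is unaffected by rescaling $\nu$ or by replacing $f$ by $f^n$, and the ``tight implies efficient'' theorem of \cite{LC:tightness} applied to $f^n$ gives $\lambda_{\min}([f^n])=\lambda(f^n)=\lambda(f)^n$. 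Note that Theorem \ref{Th:tightness} of the present paper is not needed, contrary to your opening sentence: the hypothesis is tightness, not efficiency, so only the direction already proved in \cite{LC:tightness} enters.

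Two points deserve more than the parenthetical treatment you give them. First, the claim that $[f^n]$ is irreducible: condition (2) of Definition \ref{D:irreducible} passes to powers because powers of pseudo-Anosov maps are pseudo-Anosov, but the nonexistence of closed reducing surfaces for $[f^n]$ does not follow formally from its nonexistence for $[f]$ and needs an argument (for instance via the $f^n$--invariant filling laminations); without it, neither $\lambda_{\min}([f^n])$ nor the application of the machinery of \cite{LC:tightness} to $f^n$ is even meaningful. Second, $M^n$ may fail to be irreducible when $M$ is irreducible but imprimitive; this is harmless for the growth rate (one still has $\lambda(M^n)=\lambda^n$, and one may pass to an irreducible block), but irreducibility of the incidence matrix is the standing hypothesis under which the tight-implies-efficient theorem is formulated, so it should be checked or circumvented when invoking that theorem for $f^n$.
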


\begin{corollary*}[3.23 of \cite{LC:tightness}]
If $f$ is tight then the growth rate $\lambda$ (which is minimal) is less than or equal to the growth rate $\partial\lambda$ of its restriction $\partial f\colon \partial H\to \partial H$ to the
boundary $\partial H$ (which is pseudo-Anosov).
\end{corollary*}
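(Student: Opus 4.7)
The plan is to descend from the interior $2$-dimensional invariant measured lamination $(\Lambda,\mu)$ to a boundary measured lamination on $\partial H$, and then invoke the rigidity of $\partial f$-invariant measured laminations implied by the pseudo-Anosov hypothesis on $\partial H$.

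First, the closure of each open-disc leaf of $\Lambda$ meets $\partial H$ along a $1$-dimensional trace, yielding a lamination $\partial\Lambda\subset\partial H$. The transverse measure $\mu$ restricts to a transverse measure $\partial\mu$ on $\partial\Lambda$ via $\partial\mu(\alpha)=\mu(\alpha)$ for arcs $\alpha\subset\partial H$ transverse to $\partial\Lambda$. Because $f$ preserves $\partial H$ and satisfies $f(\Lambda,\mu)=(\Lambda,\lambda\mu)$, restricting to the boundary gives $\partial f(\partial\Lambda,\partial\mu)=(\partial\Lambda,\lambda\,\partial\mu)$; that is, the boundary trace is a $\partial f$-invariant measured lamination whose measure is stretched by exactly $\lambda$.

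Next I would verify that $\partial\mu$ is not identically zero. Since $(\Lambda,\mu)$ fills $H$, any total vanishing of $\partial\mu$ would force leaves of $\Lambda$ to meet $\partial H$ only along inessential arcs or curves, contradicting the incompressibility property that tightness entails (tightness is strictly stronger than incompressibility, cf.\ the discussion preceding Definition \ref{D:tight}). Given nontriviality, the final step is a direct application of pseudo-Anosov theory: the only $\partial f$-invariant essential measured laminations of $\partial H$ are, up to scaling of the measure, the stable and unstable measured laminations, with dilatation factors $\partial\lambda^{-1}$ and $\partial\lambda$ respectively. Since $\lambda>1$, the measured lamination $(\partial\Lambda,\partial\mu)$ must be a scalar multiple of the unstable measured lamination, yielding $\lambda\leq\partial\lambda$.

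The principal obstacle is the nontriviality step: certifying that the boundary trace $(\partial\Lambda,\partial\mu)$ is genuinely nonzero and essential on $\partial H$. Without tightness, one could imagine leaves of $\Lambda$ whose asymptotic behavior on $\partial H$ cancels, or whose boundary wraps around inessential curves so that $\partial\mu$ collapses to zero on every essential transversal; the tightness hypothesis, through incompressibility of $\Lambda-\Omega$ in $H-\Omega$, rules out precisely these pathologies and makes the pseudo-Anosov rigidity argument of the final step applicable.
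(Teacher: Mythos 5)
This corollary is quoted from Corollary 3.23 of \cite{LC:tightness}; the present paper gives no proof of it, so your argument must stand on its own, and its first step does not. The lamination $\Lambda$ lives in $\mathrm{Int}(H)$: each leaf is an open disc obtained as an increasing union, over $j$, of dual discs inside the exhaustion $H_j=f^j(H_0)$, so $\Lambda\cap\partial H=\emptyset$. Hence there is no ``boundary trace'' to restrict to, and the defining formula $\partial\mu(\alpha)=\mu(\alpha)$ for arcs $\alpha\subset\partial H$ is not meaningful: such arcs are not transversals of $\Lambda$, and if instead you push interior transversals into a collar of $\partial H$, the number of sheets of $\Lambda$ they cross is unbounded (that is exactly how the leaves exhaust the interior), so the limiting ``measure'' is $0$ or $\infty$, not a finite nonzero $\partial f$-invariant measure scaled by $\lambda$. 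Neither tightness nor incompressibility supplies such a measured lamination on $\partial H$; this construction is the missing (and, I believe, irreparable) step, not the nontriviality check you flag as the main obstacle.

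There is also a structural reason the approach cannot work: it proves too much. If a nonzero measured lamination on $\partial H$ had its measure multiplied by $\lambda>1$ under $\partial f$, then, exactly as you argue, the north--south dynamics of a pseudo-Anosov on the space of projective measured laminations would force it to be the unstable lamination up to scale, yielding the equality $\lambda=\partial\lambda$ rather than the stated inequality --- and equality is false in general; the corollary is an inequality precisely because $\lambda<\partial\lambda$ occurs. Relatedly, your argument makes no essential use of tightness (only of incompressibility, which the introduction notes is strictly weaker), whereas in \cite{LC:tightness} the hypothesis is load-bearing: tightness gives that $\lambda$ is the \emph{minimal} growth rate in the mapping class and that $\lambda_{\min}([f^n])=(\lambda(f))^n$ (the corollary quoted just before this one), and the bound $\lambda\le\partial\lambda$ is then obtained by comparing this minimal growth against quantities controlled by the boundary dynamics, where $\partial\lambda$ appears as the growth rate of intersection numbers under $\partial f^n$. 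An argument that uses only the existence of $f$-invariant laminations with scaling factor $\lambda$ cannot capture this, since non-efficient representatives also carry invariant laminations with their own (larger) scaling factors.
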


These two, together with Theorem \ref{Th:tightness} then yield immediately:

\begin{corollary}\label{C:powers}
If a representative $f\colon H\to H$ is efficient then so is any power $f^n\colon H\to H$.
\end{corollary}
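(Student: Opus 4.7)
The plan is to assemble the corollary directly from Theorem \ref{Th:tightness} together with Corollary 3.22 of \cite{LC:tightness}, both of which are now at our disposal. Since the statement is only about the growth rate of powers, and the two earlier results precisely package the relation between tightness, growth and powers, I do not expect to need any new geometric input.

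First I would apply Theorem \ref{Th:tightness} to the efficient representative $f$: this upgrades efficiency to tightness of the $f$-invariant pair $((\Lambda,\mu),(\Omega,\nu))$. Next I would invoke Corollary 3.22 of \cite{LC:tightness}, which depends only on tightness, to conclude that the minimal growth in the isotopy class of the $n$-th power satisfies $\lambda_{\min}([f^n]) = (\lambda(f))^n$ for every $n\geq 1$. Finally I would observe that the pair $((\Lambda,\mu),(\Omega,\nu))$ is automatically $f^n$-invariant, with $f^n(\Lambda,\mu)=(\Lambda,\lambda^n\mu)$ and $f^n(\Omega,\nu)=(\Omega,\lambda^{-n}\nu)$, so the growth rate of $f^n$ relative to this pair equals $(\lambda(f))^n$. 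Putting these together, $f^n$ realises the minimum growth rate in its mapping class and is efficient by definition.

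The only point that deserves a sentence of justification is that the growth rate computed via the $f$-invariant laminations is the same quantity that enters the definition of efficiency for $f^n$. This is automatic: efficiency only requires some filling pair of invariant measured laminations with a scalar stretching factor, and $((\Lambda,\mu),(\Omega,\nu))$ is such a pair for $f^n$ with factor $\lambda^n$. I do not expect Corollary 3.23 to play a role here; it is listed in the introduction as a parallel consequence of tightness rather than as a step in this particular argument. The main (and essentially only) obstacle would have been the implication ``efficient $\Rightarrow$ tight'', but that is exactly the content of Theorem \ref{Th:tightness}, so with that theorem in hand the present corollary reduces to a one-line chain.
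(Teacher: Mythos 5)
Your proof is correct and follows the same route the paper intends: Theorem \ref{Th:tightness} upgrades efficiency to tightness, Corollary 3.22 of \cite{LC:tightness} then gives $\lambda_{\min}([f^n])=(\lambda(f))^n$, and the observation that $((\Lambda,\mu),(\Omega,\nu))$ is $f^n$-invariant with stretching factor $\lambda^n$ shows $f^n$ realizes that minimum. You are also right that Corollary 3.23 plays no role here (it serves Corollary \ref{C:restriction} instead), so nothing is missing.
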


\begin{corollary}\label{C:restriction}
If a representative $f\colon H\to H$ is efficient than its growth rate $\lambda$ is less than or equal to the growth rate $\partial\lambda$ of its restriction $\partial f\colon \partial H\to \partial H$ to the
boundary.
\end{corollary}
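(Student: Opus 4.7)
The plan is to chain the theorem just proved with a quoted result from the prior paper. If $f\colon H\to H$ is an efficient representative of an irreducible mapping class, then Theorem \ref{Th:tightness} applies and produces that $f$ is tight, i.e.\ the $f$--invariant pair $((\Lambda,\mu),(\Omega,\nu))$ associated to $f$ by Oertel's construction satisfies Definition \ref{D:tight}. Once tightness is in hand, Corollary~3.23 of \cite{LC:tightness} directly yields $\lambda\leq\partial\lambda$, where $\lambda$ is the growth rate of $f$ and $\partial\lambda$ that of the pseudo--Anosov restriction $\partial f\colon\partial H\to\partial H$. So there is nothing to do beyond invoking these two results in sequence.

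Before declaring victory I would check two compatibility points. First, that the notion of tight appealed to in Corollary~3.23 of \cite{LC:tightness} is the same one being produced by Theorem \ref{Th:tightness}, namely the one given in Definition \ref{D:tight}; this is immediate since the definition is verbatim the one introduced in the earlier paper. Second, that the invariant laminations referenced in both statements are the ones constructed in \cite{UO:Autos}, which is again explicit in both the hypotheses of the present theorem and in the hypothesis of Corollary~3.23. There are no loose ends: the ``efficient representative'' in the present corollary and the ``tight representative'' in Corollary~3.23 are both unambiguously tied to the same Oertel pair $(\Lambda,\Omega)$ and the same growth rate $\lambda$.

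There is no real obstacle left here, since all the substance has been pushed into Theorem \ref{Th:tightness}. The interesting step, which is the actual content of this paper, is the implication efficient $\Rightarrow$ tight; the implication tight $\Rightarrow$ $\lambda\leq\partial\lambda$ was established in \cite{LC:tightness} by comparing the transverse measure $\nu$ of discs in $H$ with their $\nu$--measure when pushed to the boundary, an argument that already exploits the tightness inequality $\nu(\Delta')\leq\nu(\Delta)$ of Definition \ref{D:tight}. So the proof of Corollary \ref{C:restriction} is just the two-step deduction described above.
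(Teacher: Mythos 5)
Your two-step deduction — efficiency implies tightness by Theorem \ref{Th:tightness}, and tightness implies $\lambda\leq\partial\lambda$ by Corollary 3.23 of \cite{LC:tightness} — is precisely how the paper obtains this corollary. Correct, and the same approach.
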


Before sketching the strategy for proving Theorem \ref{Th:tightness} we recall some background constructions and terminology in the next section.

I deeply thank Ulrich Oertel for his encouragement, comments, ideas and suggestions, some coming from a very thorough reading of a previous version.


\subsection{Technical terminology and notation}\label{SS:intro:terminology}

We refer the reader to \cite{UO:Autos} and \cite{LC:tightness} for details on the constructions described in this subsection. Although the original construction of \cite{UO:Autos} is more complete the point of view of \cite{LC:tightness} is already adapted for our purposes, so we shall try to be consistent with its notation and terminology.

Recall that a $3$--manifold is a {\em compression body} \cite{FB:CompressionBody}, \cite{UO:Autos,LC:tightness} if it
can be cut-open along a finite collection of disjoint discs to yield a product $F\times I$, where $S$ is a compact
surface (not necessarily connected) and $I=[0,1]$ is the interval, and possibly some $3$--ball components. Note that a
handlebody is a special type of compression body. We are concerned with ``irreducible'' automorphisms of
handlebodies:

\begin{definition}\label{D:irreducible}
Let $H$ be a handlebody and $f\colon H\to H$ an automorphism. A {\em closed reducing surface} for the mapping class
$[f]$ is an embedded closed surface $S\neq S^2$, $\emptyset$ which is $[f]$--invariant (i.e., $f(S)$ is isotopic to $S$)
and separating $H$ into a handlebody and a compression body. We say that $[f]$ is {\em irreducible} (or {\em generic},
as in \cite{UO:Autos}) if
\begin{enumerate}
    \item there is no closed reducing surface for $[f]$, and
    \item the restriction $\partial f=f|_{\partial H}$ is isotopic to a pseudo-Anosov diffeomorphism.
\end{enumerate}
We may say that the actual automorphism $f\colon H\to H$ is {\em irreducible} when its mapping class $[f]$ is.
\end{definition}

From now on we assume that $[f]\colon H\to H$ is an irreducible mapping class of a handlebody $H$.

Oertel \cite{UO:Autos} makes the following construction for a representative $f$ of $[f]$. It starts with any
representative $f\colon H\to H$ and isotopes it until it has the desired properties. All these intermediate
automorphisms will be labeled $f$.

Choose an embedded ``concentric'' handlebody $H_0\subseteq H$. This means that $\overline{H-H_0}\simeq\partial H\times
I$. Now isotope $f$ so that it is ``outward expanding and exhausting'' with respect to $H_0$, meaning that
$H_0\subseteq\textrm{Int}(f(H_0))$, that $\bigcup_{i\in\mathbb{Z}} f^i(H_0)=\textrm{Int}(H)$, and that $\bigcap_{i\in\mathbb{Z}} f^i(H_0)$ has empty interior, yielding
\[
\cdots\subseteq H_{-1}\subseteq H_{0}\subseteq H_{1}\subseteq \cdots \subseteq H_{i}\subseteq H_{i+1}\subseteq\cdots
\]

Now consider $\mathcal{E}=\cal{E}_0=\{E_1,\dots,E_k\}$ a {\em system of essential discs} in $H_0$, briefly a {\em disc
system}, meaning it consists of properly embedded and pairwise disjoint discs $(E_i,\partial E_i)\subseteq
(H_0,\partial H_0)$ which are not boundary parallel. In this paper such a system will often have the {\em completeness}
property of cutting $H_0$ open into a union os balls.  We may abuse notation and also use  $\cal{E}_0$ to denote the union $\bigcup_{1\leq i\leq k} E_i$.

Such a complete disc system $\cal{E}_0$ determines a ``handle decomposition''
of $H_0$ where the $1$--handles are disjoint product neighborhoods $E_i\times I$ of the discs $E_i$ and the
$0$--handles are the components of the closure of the complement of the $1$--handles.
We label this handle decomposition $\mathcal{H}_0$, consisting of the $1$--handles $\mathcal{H}_0^1$ and $0$--handles $\mathcal{H}_0^0$.
We regard the product structure $E_i\times I$ of the $1$--handles in $\mathcal{H}_0^1$ as part of the decomposition. Both product foliations of each 1-handle $E_i\times I$ are important: each disc of the product foliation
$E_i\times\{p\}$ is called a {\em dual disc} of the $1$--handle; the interval leaves $\{q\}\times I$ of the transverse foliation are called {\em $I$--fibers}.

Let $H_1=f(H_0)\supseteq H_0$ and in it the disc system $\cal{E}_1=f(\cal{E}_0)$. Similarly we have the corresponding
$\cal{H}_1=f(\cal{H}_0)$. We isotope $f$ further so that the $1$--handles of $\cal{H}_0$ intersect those of $\cal{H}_1$
in a manner {\em compatible} with their product structures (see Figure \ref{F:admissibility}): that means that 1) $1$--handles of $\cal{H}_1$ intersect $H_0$ in $1$--handles of $\cal{H}_0$ with 2) dual discs of $\cal{H}_1$ intersecting $H_0$ in dual discs of $\cal{H}_0$ and 3) $I$--fibers of $\cal{H}_0$ intersecting $1$--handles of $\cal{H}_1$ in $I$--fibers. Such a handle decomposition is said to be {\em admissible}. In particular $\mathcal{E}_1\cap H_0=f(\mathcal{E}_0)\cap H_0$ consists of dual discs of $\mathcal{H}_0$ and we also deem $\mathcal{E}_0$ {\em admissible}. A consequence of admissibility for the $0$--handles is $\mathcal{H}_0^0\subseteq\mathcal{H}_1^0=f(\mathcal{H}_0^0)$.

\begin{figure}[ht]
\centering
\psfrag{H1}{\fontsize{\figurefontsize}{12}$1\textrm{--handle of } \mathcal{H}_1$}
\psfrag{H0}{\fontsize{\figurefontsize}{12}$1\textrm{--handles of } \mathcal{H}_0$}
\includegraphics[scale=0.5]{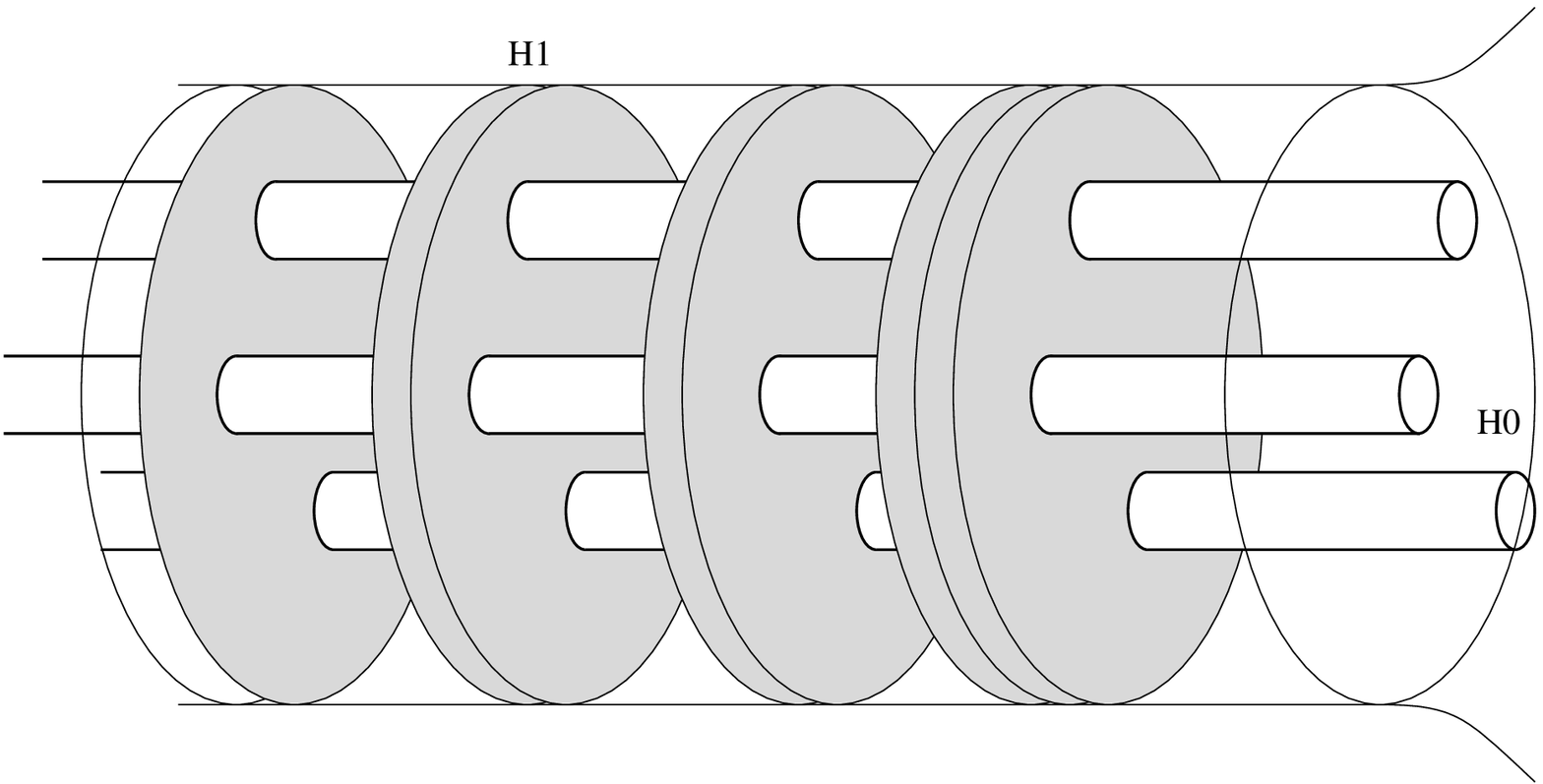}
\caption{\small A $1$--handle of $\mathcal{H}_1$ (short and wide), laminated by parallel dual discs (in dark), intersecting the $1$--handles of $\mathcal{H}_0$ (light and long and thin) in an admissible way.}\label{F:admissibility}
\end{figure}

Now for each $j\in\mathbb{Z}$ we may consider $H_j=f^j(H_0)$ and, as before, the corresponding $\cal{E}_j=f^j(\mathcal{E}_0)$ and $\cal{H}_j=f^j(\mathcal{H}_0)$.
We say that an embedded surface $S\subseteq H_n$ is {\em transverse to $\mathcal{H}_j$} if $S$ intersects $H_j$ in its $1$--handles and transverse to the $I$--fibers (of the $1$--handles of $\cal{H}_j$).\footnote{Here the definition is slightly more general than that from \cite{LC:tightness}, which requires $S\cap H_j$ to consist of dual discs for transversality to $\mathcal{H}_j$. An ambient isotopy supported in a neighborhood of the $1$--handles and preserving its $I$--fibers makes a surface transverse to $\mathcal{H}_j$ as defined in this current paper into one as in \cite{LC:tightness}.} For brevity we may also refer to such a surface as a {\em $j$--transverse surface}. From a dual point of view,  this amounts to saying that $S\cap H_j$ is {\em carried} by $\cal{E}_j$. In this case if $E\in\cal{E}_j$ and $D\subseteq S\cap H_j$ consists of components contained in the $1$--handle corresponding to $E$ then we also say that $D$ is {\em carried by} $E$. It is clear from admissibility that if $i<j$ and $S$ is $j$--transverse (i.e. $S\cap H_j$ is carried by $\cal{E}_j$) then $S$ is also $i$--transverse (i.e. $S\cap H_i$ is carried by $\cal{E}_i$).

Recall the enumeration $\mathcal{E}=\mathcal{E}_0=\{E_1,\dots,E_k\}$ and associate  to $\mathcal{E}$ the following ``incidence matrix'' $M$. For each
pair $1\leq i,j\leq k$ let the entry $M_{(i,j)}$ be the number of components of $f(E_i)\cap H_0$ which are carried by
$E_j$. We call it the {\em incidence matrix of $\mathcal{E}$ with respect to $f$}. We may label
$M=M(\mathcal{E})=M(f,\mathcal{E})$, depending on whether it is convenient to specify the representative or disc system. If $[f]$ is irreducible we can obtain a representative $f$ and a complete and admissible $\mathcal{E}$ such that the corresponding matrix is {\em
irreducible}, as defined in Subsection \ref{SS:intro:matrices} below. Therefore $M$ has an eigenvalue
$\lambda=\lambda(f)=\lambda(M)>1$ and a corresponding $\lambda$--eigenvector $\hat\nu\in\mathbb{R}^k$ with positive entries,
$M\hat\nu=\lambda \hat\nu$.

We regard the entries of the vector $\hat\nu=(\hat\nu_1,\dots,\hat\nu_k)$ as transverse measures on the foliations of $1$--handles of $\cal{H}_0$ by $I$--fibers: define in the $1$--handle corresponding to a disc $E_i$ a measure $\eta_i$ transverse to the foliation by $I$--fibers with total measure $\hat\nu_i$. We represent that as a pair $(\mathcal{H}_0,\hat\nu)$, called the {\em standard weighted handle decomposition} of $H_0$. If $S$ is a $0$--transverse surface we write its ``$\hat\nu$--measure'' as $\inter{S}{\mathcal{H}_0}{\hat\nu}=\eta_1(S)+\eta_2(S)+\dots+\eta_k(S)$, and call it {\em the weighted intersection of $S$ with $(\mathcal{H}_0,\hat\nu)$}. Is is clear form the construction that the $i$--entry $\hat\nu_i=\inter{E_i}{\mathcal{H}_0}{\hat\nu}$. In fact
\begin{equation}\label{eq:intersection-0}
\inter{S}{\mathcal{H}_0}{\hat\nu}=\sum_{D_l\subseteq S\cap H_0} \hat\nu_{i(l)},
\end{equation}
where the sum is taken over components $D_l\subseteq S\cap H_0$ and $\hat\nu_{i(l)}$ is the weight (total measure of $\eta_{i(l)}$) assigned to the $1$--handle to which $D_l$ is transverse, recalling $\hat\nu_{i(l)}$ is the $i(l)$--entry of the vector $\hat\nu$. Therefore $\inter{S}{\mathcal{H}_0}{\hat\nu}$ may be regarded as a sort of ``counting measure with weights'', by counting components of $S\cap H_0$ with weights given by the corresponding entries of $\hat\nu$.

Through $f$ we obtain a {\em standard weighted handle decomposition} on every $H_j$ as the pair $(\mathcal{H}_j,\lambda^j\hat\nu)$. For brevity we write $\hat\nu^j=\lambda^j\hat\nu$, therefore $(\mathcal{H}_j,\hat\nu^j)$ also denotes the standard weighted handle decomposition on $H_j$. Similarly, if $S$ is a $j$--transverse surface we consider the measure, or {\em weighted intersection}
$\inter{S}{\mathcal{H}_j}{\hat\nu^j}=\lambda^j\cdot(\inter{f^{-j}(S)}{\mathcal{H}_0}{\hat\nu})$. From this definition and (\ref{eq:intersection-0}) above follows
\begin{equation}\label{eq:intersection-n}
\inter{S}{\mathcal{H}_j}{\hat\nu^j}=\sum_{D_l\subseteq S\cap H_j} \lambda^j\hat\nu_{i(l)},
\end{equation}
hence, as before, $\inter{S}{\mathcal{H}_j}{\hat\nu^j}$ should be regarded as a counting measure with weights, by counting components of $S\cap H_j$ with weights given by the corresponding entries of $\lambda^j\hat\nu$.

The $\lambda$--eigenvector property of $\hat\nu$ imply that, if $S$ is both $i$--transverse and $j$--transverse then:
\[
\inter{S}{\mathcal{H}_i}{\hat\nu^i}=\inter{S}{\mathcal{H}_j}{\hat\nu^j}.
\]

Consider the limit:
\[
 \Omega=\bigcap_{j\in\mathbb{Z}} H_j=\bigcap_{j\in\mathbb{Z}} f^j(H_0),
\]
which is a $1$--dimensional lamination (with isolated singularities, corresponding to the intersection of the $0$--handles of the $H_j$'s). Define a transverse measure $\nu$ on it in the following manner. If $S$ is transverse to $\Omega$ then it is transverse to some $\mathcal{H}_j$ and define
\[
\nu(S)=\inter{S}{\mathcal{H}_j}{\hat\nu^j},
\]
obtaining a measured lamination $(\Omega,\nu)$. Note that if $i<j$ then $S$ is also $i$--transverse and $\inter{S}{\mathcal{H}_i}{\hat\nu^i}=\inter{S}{\mathcal{H}_j}{\hat\nu^j}$, hence the definition of $\nu$ does not depend on $j$. We rather denote $\nu(S)=\inter{S}{\Omega}{\nu}$ and call it
the {\em weighted intersection} (or just the {\em intersection}) of $S$ with $(\Omega,\nu)$. We can regard $(\mathcal{H}_j,\hat\nu^j)$ as a sort of combinatorial approximation of $(\Omega,\nu)$.

From a dual point of view we obtain the transverse $2$--dimensional measured lamination $(\Lambda,\mu)$. We only outline its construction, that being quite analogous to that of $(\Omega,\nu)$. In addition, in this paper $(\Lambda,\mu)$ does play a secondary role compared with $(\Omega,\nu)$.

Consider the transpose $N=M^T$ of $M=M(f,\cal{E})$ and $\lambda$--eigenvector $\hat\mu$. It corresponds to a measure transverse to the foliation of the $1$--handles by discs, which through powers $f^j$ of $f$ yields measures $\lambda^{-j}\hat\mu$ on the $1$--handles of all $\cal{H}_j$'s. Consider
\[
 \Lambda=\bigcup_{j\in\mathbb{Z}} f^j(\bigcap_{i\geq 0} \mathcal{H}_i^1),
\]
and note that $\bigcap_{i\geq 0} \mathcal{H}_i^1\subseteq H_0$, viewed as a closed set foliated by discs, is carried by $\mathcal{E}$ hence $\Lambda\cap H_j$ is
carried by $\mathcal{E}_j$ for any $j\in\mathbb{Z}$. In fact $\bigcap_{i\geq 0} \mathcal{H}_i^1=\Lambda\cap H_0$. The vectors $\lambda^{-j}\hat\mu$ then yields approximations to a transverse measure on $\Lambda$, which determine $(\Lambda,\mu)$: if $\alpha$ is an embedded compact arc transverse to $\Lambda$ then it is transverse to the foliation by discs of (the $1$--handles of) some $\mathcal{H}_j$, whose measure determines $\mu(\alpha)$.

The property that $M=M(f,\cal{E})$ is irreducible imply that both measured lamination $(\Omega,\nu)$, $(\Lambda,\mu)$ have full support.

\subsection{The strategy}\label{SS:intro:strategy}

The purpose of this subsection is to motivate our approach to proving Theorem \ref{Th:tightness}, to sketch the argument, and  to indicate the organization of the argument in the paper. There are no proofs or important definitions in the sketch.

We will prove Theorem \ref{Th:tightness} by contradiction: supposing that
the representative $f$, together with $f$--invariant lamination pair $(\Lambda,\Omega)$, is not tight, the goal is to obtain a more
efficient representative.  Let $\Delta$ be a tightening disc, as in Definition \ref{D:tight} above. Let $L\in\Lambda$ be the leaf such that $\gamma=\partial\Delta\subseteq L$ and $\Delta'\subseteq L$ be the disc bounded by $\gamma$. If $\Delta$ has certain special
properties one pre-composes $f$ with an ambient isotopy $\varphi$ which takes a laminated product neighborhood of
$\Delta'$ (an original $1$--handle of the handle decomposition) to a similar neighborhood of $\Delta$ (a new $1$--handle), taking
$\Delta'$ to $\Delta$. This ``tightening'' process of replacing $\Delta'$ with the ``more efficient'' $\Delta$ yields a
more efficient representative (perhaps after performing some standard simplifications). Therefore, if the original representative
$f$ was efficient it was necessarily also tight.

Of course the outline above hides most of the technical difficulties in the vague phrase ``if $\Delta$ has {\em certain
special properties} ...''. Both \cite{UO:Autos} and \cite{LC:tightness} deal with cases where the ``special
properties'' considered are too restrictive: in the first, essentially that $\inter{\Delta}{\Omega}{\nu}=\nu(\Delta)=0$; in the second,
essentially that $(\Delta,\partial\Delta)\subseteq(H_0,\partial H_0)$ and that $f(\Delta)\cap H_0$ is carried by
$\mathcal{E}=\mathcal{E}_0$.  These properties cannot be assumed in general, for an arbitrary lamination pair which is not tight.
We need some less restrictive properties which are achievable in full generality and sufficient for our purposes.

Recall from Section \ref{SS:intro:terminology} (and \cite{UO:Autos}, \cite{LC:tightness}) the construction of the pair $(\Lambda,\mu)$, $(\Omega,\nu)$ associated to $f$, involving a choice of admissible disc system $\mathcal{E}=\mathcal{E}_0=\{E_1,\dots,E_k\}\subseteq H_0\subseteq H$ and corresponding handle decomposition $\mathcal{H}_0$.
This yields through powers $f^j$ disc systems $\mathcal{E}_j\subseteq H_j=f^j(H_0)$.

Now we bring the tightening disc $\Delta$ back into the picture. We can assume 1) that $\Delta\subseteq H_0$ and that it is
parallel to a disc $\Delta'=E_1\in\mathcal{E}\subseteq H_0$; 2) that
$\Delta$ is disjoint from $\mathcal{E}$ so it can be added to the system, yielding a larger system \cite{LC:tightness}. Suppose
further that $f(\Delta)\cap H_0$ is also carried by $\mathcal{E}$. Then enlarging $\mathcal{E}'=\mathcal{E}\cup\{\Delta\}$ yields
$f(\mathcal{E}')\cap H_0$ still carried by $\mathcal{E}$.
If $\varphi$ is an ambient isotopy realizing the parallelism from $E_1$ to $\Delta$ then $f\circ\varphi(\mathcal{E}')\cap H_0$ is carried by $\mathcal{E}\subseteq\mathcal{E}'$. By passing to a subsystem, and possibly using other simplifications, this procedure
can be made to work \cite{LC:tightness}. The argument is not general because the assumption that $f(\Delta)\cap H_0$ is also carried by $\mathcal{E}$ is not general.

It is too much to expect that $f(\Delta)\cap H_0$ is also carried by $\mathcal{E}$ but we can consider something slightly more general: suppose the original disc system $\mathcal{E}$ can be enlarged to a system $\mathcal{E}'\supseteq \mathcal{E}$,
$\Delta\in\mathcal{E}'$ which is admissible, i.e. $f(\mathcal{E}')\cap H_0$ is carried by $\mathcal{E}'$ itself, possibly not by $\mathcal{E}$ as before. Still, if $\varphi$ is an ambient isotopy realizing the parallelism between $E_1$ and $\Delta$, then $f\circ\varphi$ is more efficient than $f$ (by choosing a suitable subsystem of $\mathcal{E}'$ and possibly some other adjusting isotopies). This will be proved in this paper, mainly in subsections \ref{S:proof:matrices} and \ref{S:proof:tracking}: Section \ref{S:proof:matrices} considers how to obtain the desired representative and its corresponding incidence matrix; and Section \ref{S:proof:tracking} shows that the representative obtained is indeed the desired one, i.e. more efficient than the original one, by carefully comparing the two corresponding incidence matrices.

So consider the task of enlarging $\mathcal{E}$ to an admissible system $\mathcal{E}'$ containing $\Delta$, possibly after some adjusting isotopies. That accounts to most of the work of the paper, culminating in Proposition \ref{P:stabilized} (in its statement $\mathcal{E}\cup\mathcal{D}$ plays the role of the enlargement $\mathcal{E}'$, and $\phi$ is one of the said adjusting isotopies).
One of the first obstacles to realizing it is the following. It may happen that when we enlarge $\mathcal{E}\subseteq H_0$, adding new discs $D\subseteq H_0$ to it, we will obtain components of $f(D)\cap H_0$ which are not essential discs. We introduce the notions of ``incidence disc'' and ``incidence moves'' in Section \ref{S:incidence} in order to realize this fitness property: obtain enlargements $\mathcal{E}'\supseteq\mathcal{E}$ with the property that $f(\mathcal{E}')\cap H_0$ consists of essential discs. Being more precise, to obtain an ambient isotopy $h\colon H\to H$ such that $h\circ f(\mathcal{E}')\cap H_0$ consists of essential discs. This isotopy $h$ realizes the incidence move. It is also necessary to control how these moves affect the $\nu$--measure of surfaces.

So we can assume, roughly, that the image $f(\mathcal{E}')$ of a given enlargement of $\mathcal{E}\subseteq\mathcal{E}'\subseteq H_0$ intersect $H_0$ in essential discs. Our goal is to obtain an admissible enlargement $\mathcal{E}'$ containing the tightening disc $\Delta$. A naive but natural approach towards this goal is the following. Start enlarging $\mathcal{E}$ by adding $\Delta$ to it, obtaining an enlargement $\mathcal{D}$. Now consider $f(\mathcal{D})\cap H_0$, which we are now assuming consists of essential discs. If it is disjoint from $\mathcal{D}$ we can enlarge $\mathcal{D}$ with $f(\mathcal{D})\cap H_0$, obtaining $\mathcal{D}'$. If $f(\mathcal{D}')\cap H_0$ is disjoint from $\mathcal{D}'$ we can repeat the process, enlarge $\mathcal{D}'$ with $f(\mathcal{D}')\cap H_0$, obtaining $\mathcal{D}''$, and so one. This yields an increasing sequence of disc systems, so this process eventually ``stabilizes'', meaning not just that no new isotopy class of discs appear, but also in a sense that no newly added disc is ``more efficient'' than a previous one. By making suitable adjustments we achieve the goal: to obtain an enlargement $\mathcal{E}'$ of $\mathcal{E}$ containing the tightening disc $\Delta$ and such that $f(\mathcal{E}')\cap H_0$ carried by $\mathcal{E}'$.

The argument sketched in the paragraph above assumes that in the process of enlargement we obtain disc systems $\mathcal{D}'$'s with the property that $f(\mathcal{D}')\cap H_0$ is disjoint from $\mathcal{D}'$, a property which is not general. This is the second main obstacle to finish the proof of Theorem \ref{Th:tightness}. Note that this property, for this construction, means that powers $f^p(\Delta)$, $p\geq 1$ are disjoint from $\Delta$. That is so because the construction starts by adding $\Delta$ and taking powers, and $f^p(\Delta)$ is always disjoint from $\mathcal{E}$. Therefore the task is to change $f$ through an ambient isotopy $\phi$ so that $(\phi\circ f)^p(\Delta)\cap\Delta=\emptyset$, $p\geq 1$. In Section \ref{S:pushaway} we discuss the operation of ``pushing a surface away'' from $\Delta$, the main tool for realizing this property.

By combining the elements sketched so far we shall obtain ambient isotopies $\phi$, $\varphi\colon H\to H$ such that $\phi\circ f\circ\varphi\colon H\to H$ is more efficient than $f\colon H\to H$. The $\phi$ constructed will be essentially a composition of (realizations) of incidence moves and push-aways. And $\varphi$ realizing the ``tightening'' move, the parallelism between $E_1$ and $\Delta$ (where $E_1\in\mathcal{E}$ is the disc parallel to $\Delta$). This final construction is obtained in Subsection \ref{S:proof:matrices}.

For technical reasons, right in the beginning of the construction the tightening disc will be replaced by a suitable ``parallel tightening disc'', as in Definition \ref{D:parallel_tightening_disc}. Recall that we mentioned that subsections \ref{S:proof:matrices} and \ref{S:proof:tracking} deal with growth rates of incidence matrices, therefore we will discuss some technical auxiliary lemmas on such matrices in Subsection \ref{SS:intro:matrices} below.

\subsection{Matrices and eigenvalues}\label{SS:intro:matrices}

All matrices and vectors in this paper have non-negative entries. If $v$ is such a vector
we say it is {\em non-negative} and write $v\geq 0$, meaning all its entries $v_i\geq 0$. In fact given vectors of the same rank $u$, $v\in\mathbb{R}_+^n$ we write $u\leq v$ (respectively $u<v$) to mean that all pair of entries $u_i\leq v_i$ ($u_i<v_i$) for all $1\leq i\leq n$. If we regard $0$ as the null-vector then $v\geq 0$ does mean the entries are not negative.
Similarly, a {\em positive vector} $v>0$ is one with all $v_i>0$.
Matrices, in addition to being non-negative will have integer entries and will be square.

The reader should bear in mind that in the context of this paper matrices appear as incidence
matrices (mostly) of disc systems: they encode the combinatorics of the image of the system through $f$ intersecting
its corresponding $1$--handles.
We shall achieve efficiency by changing disc systems and isotoping
the automorphism. These changes affect the matrices, and in this subsection we consider the kinds
of matrix changes we shall face and how they affect efficiency.
Efficiency is measured by the corresponding eigenvalue (the smaller the more efficient)
and the following proposition is a standard tool for identifying when a move improves efficiency
(see e.g. \cite{ES:73,BH:Tracks}). Recall that a $n\times n$ non-negative matrix $N$ is {\em irreducible} if
given any pair $(i,j)$, $1\leq i\leq  n$, $1\leq j\leq n$ there is a power $N^p$ such that the corresponding entry
$(N^p)_{(i,j)}>0$. Such a matrix has a real eigenvalue $\lambda(N)\geq 1$ realizing the spectral radius with
corresponding
$\lambda$--eigenvector $v\in\mathbb{R}^n$, $v>0$ (the case $\lambda=1$ being that of transitive permutation matrices, which do not appear as incidence matrices of the irreducible handlebody automorphisms considered in this paper).
Recall that for vectors $u\leq v$ means $u_i\leq v_i$ for every $i$--entry.

\begin{proposition}\label{P:subinvariance}
Let $N$ be a non-negative and irreducible $n\times n$ matrix and
$v\in \mathbb{R}^n$ with $v_i\geq 0$ and $v\neq 0$. If
$$
Nv\leq \lambda v,
$$
then $\lambda(N)\leq\lambda$ and $v>0$. If, moreover,
$(Nv)_i<\lambda v_i$ for some $i$, then $\lambda(N)<\lambda$.
\end{proposition}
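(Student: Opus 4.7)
The plan is to invoke the Perron–Frobenius structure on the transpose $N^T$ and pair the given subinvariance inequality against a positive left eigenvector. Since $N$ is non-negative and irreducible, $N^T$ is also non-negative and irreducible with the same spectral radius $\lambda(N)$, so by Perron–Frobenius there is a strictly positive vector $w > 0$ with $N^T w = \lambda(N) w$, equivalently $w^T N = \lambda(N) w^T$.

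Next I would hit the inequality $Nv \leq \lambda v$ with $w^T$ from the left. Because $w > 0$, the pairing preserves and strengthens inequalities in the natural way: one gets
\[
\lambda(N)\, w^T v \;=\; w^T N v \;\leq\; \lambda\, w^T v .
\]
Since $v \geq 0$, $v \neq 0$, and $w > 0$, the inner product $w^T v = \sum_j w_j v_j$ is strictly positive, so it can be cancelled to yield $\lambda(N) \leq \lambda$. For the strict part of the statement, if $(Nv)_i < \lambda v_i$ for some $i$, then the expansion
\[
w^T N v - \lambda w^T v \;=\; \sum_j w_j \bigl((Nv)_j - \lambda v_j\bigr)
\]
is a sum of non-positive terms, at least one of which (the $i$-th, since $w_i > 0$) is strictly negative; hence $w^T N v < \lambda w^T v$ and $\lambda(N) < \lambda$.

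It remains to show $v > 0$. Suppose for contradiction that the index set $Z = \{i : v_i = 0\}$ is non-empty, and let $P = \{j : v_j > 0\}$, which is non-empty because $v \neq 0$. For any $i \in Z$, the inequality $(Nv)_i \leq \lambda v_i = 0$ combined with $N \geq 0$ and $v \geq 0$ forces $N_{ij} v_j = 0$ for every $j$, and in particular $N_{ij} = 0$ for all $j \in P$. The same argument applied to the iterated inequality $N^p v \leq \lambda^p v$ (obtained by inducting on $Nv \leq \lambda v$) gives $(N^p)_{ij} = 0$ for all $p \geq 1$, all $i \in Z$, and all $j \in P$. This directly contradicts irreducibility of $N$, which requires $(N^p)_{ij} > 0$ for some $p$ for every pair $(i,j)$. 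Hence $Z = \emptyset$ and $v > 0$.

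The only step requiring any care is the positivity of $v$: the two eigenvalue inequalities drop out of the one-line Collatz–Wielandt pairing, but the conclusion $v > 0$ is genuinely where irreducibility is used, and the iteration $N^p v \leq \lambda^p v$ is the mechanism that converts the local vanishing of a single row of $N$ into a contradiction with the global irreducibility hypothesis.
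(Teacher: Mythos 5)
Your proof is correct. The paper does not actually prove Proposition~\ref{P:subinvariance} --- it is stated as ``a standard tool'' with a citation, so there is no internal argument to compare against. Your argument is the standard one: the Collatz--Wielandt pairing of the subinvariant vector $v$ against a strictly positive left Perron eigenvector $w$ of $N$ (which exists by Perron--Frobenius applied to $N^T$, which shares the spectral radius of $N$) immediately gives $\lambda(N)\, w^T v \le \lambda\, w^T v$, and since $w^T v > 0$ this yields $\lambda(N)\le\lambda$, with strictness when some component inequality is strict because the corresponding weight $w_i$ is strictly positive. The positivity $v>0$ is correctly derived: if $Z=\{i:v_i=0\}$ and $P=\{j:v_j>0\}$ were both non-empty, then from $(N^p v)_i\le\lambda^p v_i=0$ for $i\in Z$ together with $N^p\ge 0$, $v\ge 0$ one gets $(N^p)_{ij}=0$ for all $p\ge 1$ and all $j\in P$, contradicting irreducibility. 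The only small point worth making explicit for the reader is that the iterate inequality $N^p v\le\lambda^p v$ used here is exactly item~I of Lemma~\ref{L:subinvariance}, so no new argument is needed there.
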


The following are elementary.

\begin{lemma}\label{L:subinvariance}
Suppose that $M$ is a non-negative $m\times m$ integer matrix, $\lambda>0$, $v\in \mathbb{R}^m$ with $v\geq 0$ and
$v\neq 0$.
\begin{enumerate}
\item[I-] If $Mv\leq \lambda v$ then $M^pv\leq \lambda^p v$ for any power $p\geq 1$.
\item[II-] If moreover $N$ is a non-negative $m\times m$ integer matrix with $Nv\leq Mv$ then $N^p v\leq M^p v\leq
\lambda^p v$.
\item[III-] If in addition to I, II for some $p\geq 1$ and for some $i$, $1\le i\le m$, the inequality $(M^pv)_i<\lambda^p v_i$  holds, then also $(N^pv)_i<\lambda^pv_i$.
\end{enumerate}
\end{lemma}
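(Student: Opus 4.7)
The plan is to prove all three parts by a short induction on $p$, leveraging the single fundamental tool for non-negative matrices: if $A\geq 0$ is a non-negative matrix and $u\leq w$ are vectors (component-wise), then $Au\leq Aw$. This monotonicity is essentially the only observation required; the rest is bookkeeping.

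For Part I, the base case $p=1$ is precisely the hypothesis $Mv\leq\lambda v$. For the inductive step, assuming $M^pv\leq \lambda^p v$, I would apply $M$ to both sides (using $M\geq 0$) to get $M^{p+1}v = M(M^pv) \leq M(\lambda^pv) = \lambda^p(Mv) \leq \lambda^{p+1}v$, invoking the hypothesis again in the last step.

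For Part II, the upper bound $M^pv\leq \lambda^pv$ is just Part I. The inner bound $N^pv\leq M^pv$ I would also prove by induction: the base case $p=1$ is the hypothesis $Nv\leq Mv$, and for the inductive step I would chain two monotonicity steps together, namely
\[
N^{p+1}v \;=\; N(N^pv) \;\leq\; N(M^pv) \;\leq\; M(M^pv) \;=\; M^{p+1}v,
\]
where the first inequality uses $N\geq 0$ together with the inductive hypothesis $N^pv\leq M^pv$, and the second uses the hypothesis $Nu\leq Mu$ applied at the non-negative vector $u=M^pv$ (which in the context of this paper holds at every iterate one needs).

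Part III is then an immediate consequence of Part II: if $(M^pv)_i<\lambda^pv_i$ for some index $i$, then by Part II we have $(N^pv)_i\leq (M^pv)_i<\lambda^pv_i$, so the strict inequality transfers from $M$ to $N$. There is no substantive obstacle; the only thing to be careful about is making explicit that the non-negativity of $M$ and $N$ is what licenses each monotonicity step in the induction.
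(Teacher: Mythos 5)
Your proofs of Part I and of Part III granting Part II are correct and match the paper's approach. The problem is Part II, and you put your finger on it yourself. In the inductive step
\[
N^{p+1}v \;=\; N(N^pv) \;\leq\; N(M^pv) \;\leq\; M(M^pv) \;=\; M^{p+1}v
\]
the second inequality needs $Nw\leq Mw$ at $w=M^pv$, but the lemma only assumes $Nv\leq Mv$ for the one fixed vector $v$. Your parenthetical ``(which in the context of this paper holds at every iterate one needs)'' is not something you have established, and it is not a consequence of the stated hypotheses. No rearrangement of the induction rescues this: starting instead from $N^{p+1}v=N^p(Nv)\leq N^p(Mv)$ lands you at $N^p(Mv)$, which the inductive hypothesis $N^pv\leq M^pv$ gives you no way to compare with $M^{p}(Mv)=M^{p+1}v$.

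In fact the inner inequality $N^pv\leq M^pv$ of Part II is false under only the stated hypotheses. Take $m=2$, $v=(1,2)^T$, $\lambda=2$, $M=\left(\begin{smallmatrix}0&1\\0&0\end{smallmatrix}\right)$, $N=\left(\begin{smallmatrix}2&0\\0&0\end{smallmatrix}\right)$: then $Mv=Nv=(2,0)^T\leq(2,4)^T=\lambda v$, so the hypotheses of Parts I and II hold, yet $N^2v=(4,0)^T\not\leq(0,0)^T=M^2v$; Part III likewise fails at $p=2$, $i=1$, since $(M^2v)_1=0<4=\lambda^2v_1$ but $(N^2v)_1=4$. What does follow is the weaker conclusion $N^pv\leq\lambda^pv$, by chaining $N(N^pv)\leq N(\lambda^pv)=\lambda^p(Nv)\leq\lambda^p(Mv)\leq\lambda^{p+1}v$. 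The paper's own justification is the single line ``I and II follow easily by induction on $p$,'' so it does not record the additional entrywise assumption $N\leq M$ (equivalently $Nw\leq Mw$ for all $w\geq 0$) that your inductive step --- and any inductive step --- actually needs; that assumption is exactly what your parenthetical was quietly trying to supply, and it would have to be added to the statement for Parts II and III to hold as written.
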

\begin{proof}
I and II follow easily by induction on $p$. And III follows from II: $(N^pv)_i\leq(M^pv)_i<\lambda^pv_i$.
\end{proof}

Given a non-negative $m\times m$ integer matrix $M=\{ M_{(i,j)}\}$ we call $N$ a {\em submatrix} of $M$ if $N$ is obtained
by
erasing a choice of columns of $M$ and their corresponding rows. More precisely,
let $I=\{i_1<i_2<\cdots<i_n\}\subseteq \{1,2,\dots,m\}$ be an {\em index subset}. Then $N=\{ M_{(i,j)}\,|\, i,j\in I \}$.
Given $I=\{i_1<i_2<\cdots<i_n\}$ and a vector $v\in\mathbb{R}^m$ we can ``extract'' the corresponding vector $\check
v=\check v(I)=(
v_{i_1},v_{i_2},\dots,v_{i_n})$.

We are especially interested in irreducible submatrices. Since
submatrices are obtained by erasing entries from a non-negative matrix we obtain:

\begin{corollary}\label{C:submatrix}
Let $M$, $\lambda>0$ and $v\in\mathbb{R}^m$ be as in the statement of Lemma \ref{L:subinvariance}. Also assume
$Mv\leq \lambda v$.
Let $N$ be a submatrix of $M$ corresponding to an index subset $I\subseteq \{1,2,\dots,m\}$. Obtain from $v$ the vector
$\check v=\check v(I)$ corresponding to $I$.
\begin{enumerate}
\item [1-] For every $i\in I$ and $p\geq 1$ holds $(N^p\check v)_i\leq (M^p v)_i\leq \lambda^p v_i=\lambda^p\check v_i$.

\item [2-] If, in addition, $N$ is irreducible and for some $p\geq 1$ and $i\in I$ the inequality $(M^p v)_{i}<\lambda^p v_{i}$ holds then $\lambda(N)<\lambda$.
\end{enumerate}
\end{corollary}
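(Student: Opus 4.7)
The plan is to handle the two statements in sequence, with Part 1 being essentially bookkeeping and Part 2 requiring a Perron--Frobenius style argument.

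For Part 1, the rightmost inequality $(M^p v)_i \leq \lambda^p v_i = \lambda^p \check v_i$ is immediate from Lemma~\ref{L:subinvariance}(I), so the content is to establish $(N^p \check v)_i \leq (M^p v)_i$ for every $i \in I$. I would do this by induction on $p$. The base case $p=1$ is transparent: for $i \in I$, $(N \check v)_i = \sum_{j \in I} M_{(i,j)} v_j$, whereas $(Mv)_i = \sum_{j=1}^m M_{(i,j)} v_j$ includes exactly the same non-negative summands plus extra non-negative terms for $j \notin I$. For the inductive step, assuming $(N^p \check v)_j \leq (M^p v)_j$ for all $j \in I$, one writes
\[
(N^{p+1} \check v)_i \;=\; \sum_{j \in I} M_{(i,j)} (N^p \check v)_j \;\leq\; \sum_{j \in I} M_{(i,j)} (M^p v)_j \;\leq\; (M^{p+1} v)_i,
\]
using non-negativity of all entries at each step.

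For Part 2, Part 1 immediately delivers $N^p \check v \leq \lambda^p \check v$ coordinate-wise on $I$, together with the strict inequality $(N^p \check v)_i < \lambda^p \check v_i$ at the distinguished index $i$; note that $\check v_i = v_i > 0$, since $0 \leq (M^p v)_i < \lambda^p v_i$ forces $v_i > 0$. Since $N$ is irreducible, Perron--Frobenius applied to $N^T$ (which is also irreducible) yields a strictly positive left eigenvector $w > 0$ with $w^T N = \lambda(N)\, w^T$, and hence $w^T N^p = \lambda(N)^p w^T$. Pairing $w^T$ with the non-negative vector $\lambda^p \check v - N^p \check v$, whose $i$-th entry is strictly positive and whose $i$-th weight $w_i$ is strictly positive, produces a strict inequality
\[
\lambda(N)^p\, (w^T \check v) \;=\; w^T N^p \check v \;<\; \lambda^p\, (w^T \check v).
\]
Because $w > 0$ and $\check v \geq 0$ with $\check v_i > 0$, the scalar $w^T \check v$ is strictly positive, so dividing gives $\lambda(N)^p < \lambda^p$, hence $\lambda(N) < \lambda$.

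The one subtlety worth flagging is why I do not simply invoke Proposition~\ref{P:subinvariance} on $N^p$ with the vector $\check v$: powers of an irreducible matrix need not be irreducible (the periodic case is the obstruction), so the hypothesis of that proposition is not automatically available at level $p$. The left-eigenvector pairing sidesteps this, since it uses only the irreducibility of $N$ itself to produce the strictly positive $w$, and then converts the vector-level strict inequality into a scalar strict inequality between spectral radii.
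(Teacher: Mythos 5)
Your Part 1 is exactly the paper's proof: induction on $p$ for $(N^p\check v)_i\leq (M^pv)_i$, the middle inequality from Lemma~\ref{L:subinvariance}, and $v_i=\check v_i$ for $i\in I$.

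For Part 2 you take a genuinely different route. The paper argues by contradiction without ever leaving Proposition~\ref{P:subinvariance}: having $\lambda(N)\leq\lambda$ from the first part of that proposition, it supposes $\lambda(N)=\lambda$; then the contrapositive of the second part of Proposition~\ref{P:subinvariance}, applied at level $p=1$ to $N$ and $\check v$, forces $N\check v=\lambda\check v$ exactly (no slack anywhere), hence $N^p\check v=\lambda^p\check v$ for all $p$, contradicting the strict inequality $(N^p\check v)_i\leq (M^pv)_i<\lambda^p\check v_i$ from Part~1. You instead invoke the Perron--Frobenius existence of a strictly positive left eigenvector $w$ for the irreducible $N$ — a fact not stated among the paper's tools — and pair $w$ against the nonzero non-negative vector $\lambda^p\check v - N^p\check v$ to get the scalar strict inequality $\lambda(N)^p\,w^T\check v<\lambda^p\,w^T\check v$, then divide. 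Both routes correctly sidestep the subtlety you flag (that $N^p$ need not be irreducible, so Proposition~\ref{P:subinvariance} cannot be applied directly to $N^p$). Your argument is a self-contained left-eigenvector pairing, whereas the paper's stays entirely within the machinery it has already set up and is slightly shorter; your explicit observation that $v_i>0$ is actually automatic from Proposition~\ref{P:subinvariance}, which gives $\check v>0$, but it does no harm. Both proofs are correct.
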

\begin{proof}

1- $(N^p\check v)_i\leq (M^pv)_i$ follows easily by induction on $p$. From Lemma \ref{L:subinvariance}
$(M^pv)_i\leq \lambda^p v_i$. And $\lambda^p v_i=\lambda^p\check v_i$ since $v_i=\check v_i$, $i\in I$.

2- By Proposition \ref{P:subinvariance} $\lambda(N)\leq\lambda$. We prove the statement by contradiction.  Supposing the inequality  $\lambda(N)<\lambda$ is false, we have $\lambda(N)=\lambda$.
Since $N\check v\leq \lambda\check v$, by the second statement in Proposition \ref{P:subinvariance} we obtain
that $N\check v=\lambda\check v$. Then $N^p\check v=\lambda^p\check v$ for any $p\geq 1$. By 1 above for any $i\in I$ holds $(N^p\check v)_i\leq (M^p v)_i$. Therefore for the power $p\geq 1$ and index $i\in I$ in the statement holds $(N^p\check v)_i\leq (M^p v)_i<\lambda^p v_i=\lambda^p\check v_i$, which gives the required contradiction.
\end{proof}

We shall also need (for proving Lemma \ref{L:first_move}):

\begin{lemma}\label{L:propagation_matrix}
Let $M$ be an $m\times m$ irreducible matrix. For every $1\leq j\leq m$ (determining a row)
consider $q=q(j)\geq 1$ the
smallest integer such that the first entry of the $j$--row of $M^q$ is not zero.
Suppose $\bar M$ is obtained from $M$
by changes only on the first row.
For any $2\leq j\leq m$ the $j$--row of $\bar M^p$, $1\leq p\leq q=q(j)$ equals that
of $M^p$ (in particular the first entry of the $j$--row of $\bar M^q$ is non-zero).
\end{lemma}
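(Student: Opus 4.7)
The plan is induction on $p$, with the inductive step resting on the combinatorial observation that in the walk expansion of $(M^p)_{(j,k)}$ one never has to consult the first row of $M$ when $p\leq q(j)$.

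Base case $p=1$: since $j\geq 2$ and $\bar M$ differs from $M$ only in its first row, the $j$-th rows already coincide. For the inductive step, assume the conclusion for $p-1$ (for all eligible indices) and let $j\geq 2$ with $p\leq q(j)$. Write $\bar M^p=\bar M\cdot\bar M^{p-1}$, so that the $j$-th row of $\bar M^p$ equals
\[
\sum_{i=1}^{m}\bar M_{(j,i)}\,\bigl(\text{$i$-th row of }\bar M^{p-1}\bigr)
\;=\;\sum_{i=1}^{m} M_{(j,i)}\,\bigl(\text{$i$-th row of }\bar M^{p-1}\bigr),
\]
using $j\neq 1$ for the last equality. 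Two families of summands must be controlled. First, the $i=1$ term: because $p\leq q(j)$ and $p\geq 2$ we have $q(j)\geq 2$, hence $M_{(j,1)}=0$, killing this contribution. Second, for any $i\geq 2$ with $M_{(j,i)}>0$, prepending the edge $j\to i$ to a shortest walk from $i$ to $1$ yields a walk from $j$ to $1$ of length $q(i)+1$, so $q(j)\leq q(i)+1$, equivalently $q(i)\geq q(j)-1\geq p-1$. The inductive hypothesis then gives that the $i$-th row of $\bar M^{p-1}$ equals the $i$-th row of $M^{p-1}$.

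Assembling, the $j$-th row of $\bar M^p$ equals $\sum_i M_{(j,i)}\,(\text{$i$-th row of }M^{p-1})$, which is the $j$-th row of $M\cdot M^{p-1}=M^p$, closing the induction. The parenthetical assertion in the statement, that the first entry of the $j$-th row of $\bar M^{q}$ is non-zero, is then immediate from the definition of $q=q(j)$ together with the equality of the $j$-th rows of $\bar M^q$ and $M^q$. I do not anticipate a serious obstacle; the only mildly delicate step is the digraph estimate $q(i)\geq q(j)-1$, and the whole argument can equivalently be phrased via walks in the digraph of $M$: for $p\leq q(j)$, no length-$p$ walk from $j$ may pass through vertex $1$ except possibly at its terminal step (else truncation produces a walk from $j$ to $1$ of length $<q(j)$), so only non-first rows of $M$ are read in the expansion, and those rows are identical in $\bar M$.
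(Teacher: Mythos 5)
Your proof is correct, but it takes a genuinely different route from the paper's. You decompose $\bar M^p=\bar M\cdot\bar M^{p-1}$ (peeling off the leftmost factor), which forces you to account for every $i$-th row of $\bar M^{p-1}$ with $M_{(j,i)}>0$, and that is why you need the digraph estimate $q(i)\geq q(j)-1$ and must argue that the $i=1$ term vanishes via $M_{(j,1)}=0$. The paper instead decomposes $\bar M^p=\bar M^{p-1}\cdot\bar M$ (peeling off the rightmost factor), so the $j$-th row of $\bar M^p$ is $(\text{$j$-th row of }\bar M^{p-1})\cdot\bar M$; once that row vector is known to equal the $j$-th row of $M^{p-1}$ and to have zero first entry (because $p-1<q(j)$), right-multiplication is insensitive to the altered first row of $\bar M$, and one is done in a single stroke with no digraph estimate and no case split on $i$. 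Both arguments are inductions on the power $p$ in substance (the paper phrases it loosely as induction on $q(j)$, but the actual mechanism is the same $p\to p+1$ step), and both are sound; yours is slightly longer because the left-peeling decomposition requires tracking neighbours of $j$, whereas the right-peeling one localises everything to the single row of interest.
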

\begin{proof}
First note that $q(j)\in\mathbb{Z}$ is well-defined since $M$ is irreducible. We prove the lemma by induction on $q(j)$. For any $2\leq j\leq m$ with $q(j)=1$ the result is immediate. Suppose it holds for every $j$ with $q(j)=q\geq 1$ and let $j$ be such that $q(j)=q+1$.
Let $1\leq p\leq q+1$. Then $\bar M^p=\bar M^{p-1} \bar M$.
But since $p-1\leq q$, the induction hypothesis implies that the $j$--row of
$\bar M^{p-1}$ equals that of $M^{p-1}$.  Hence the first entry in the $j$--row is zero, and then the product on the right by $\bar M$ is not affected by changes on the first row.
Therefore the $j$--row of $\bar M^p$ equals that of $M^p$.
\end{proof}

\section{Incidence discs}\label{S:incidence}

Recall that we shall prove Theorem \ref{Th:tightness} by assuming that $((\Lambda,\mu),(\Omega,\nu))$ is not tight and obtaining a contradiction.  Consider the disc system $\mathcal{E}=\mathcal{E}_0\subseteq
H_0$ and handle decomposition $\mathcal{H}_0$ corresponding to the invariant laminations.  The goal is to change $\mathcal{E}$ (and hence $\mathcal{H}_0$) and $f$ in order to reduce the growth rate.
This will be achieved by first enlarging $\mathcal{E}$ and then extracting a subsystem. It will also be necessary to isotope $f$.

The discs $E$ of the system $\mathcal{E}\subseteq H_0$ have the special property that $f(E)\cap H_0$ consists of
essential discs. Moreover, $f(E)\cap H_0$ is carried by $\mathcal{E}$, since $\mathcal{E}$ is admissible. In the process of enlarging
$\mathcal{E}$ it is desirable to add discs which have at least the first property. That is: essential discs $D\subseteq
H_0$ such that $f(D)\cap H_0$ consists of essential discs.\footnote{Towards obtaining the second {\em admissibility} property we further need to consider product neighborhoods of the essential discs of $f(D)\cap H_0$ and regard them as new $1$--handles.} This yields the following generalization.

\begin{definition}\label{D:incidence_disc}
An essential disc $(D,\partial D)\subseteq (H_n,\partial H_n)$ is an {\em incidence disc} if:
 \begin{enumerate}
    \item $D\cap H_{n-1}$ consists of essential discs, and
    \item $D\cap\Lambda\subseteq\partial D$ ($D\cap\Lambda$ often being empty).
 \end{enumerate}
If moreover $D\cap(H_{n}-\mathring{H}_{n-1})$ is incompressible, then it is called an {\em incompressible incidence
disc}.

An incidence disc $D$ which is also {\em $i_0$--transverse} (i.e. transverse to $\mathcal{H}_{i_0}$) for some $i_0\leq n$ is, of course, an {\em $i_0$--transverse incidence disc}.

\end{definition}

\begin{obs}\label{Rmrk:incidence_disc}
It is clear from the definition that if $(D,\partial D)\subseteq (H_n,\partial H_n)$ is an incidence disc then
$D\cap (H_{n}-\mathring{H}_{n-1})$ is a planar surface with one boundary component in $\partial H_{n}$ and the others
in $\partial H_{n-1}$, these bounding essential discs in $H_{n-1}$.
\end{obs}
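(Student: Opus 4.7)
The plan is to unpack the definition by putting $D$ in general position with respect to $\partial H_{n-1}$ and observing that the hypothesis that $D\cap H_{n-1}$ consists of essential discs rigidly constrains how $D$ can meet $\partial H_{n-1}$. After a small ambient isotopy I may assume $D$ is transverse to $\partial H_{n-1}$, so that $D\cap\partial H_{n-1}$ is a finite disjoint family of simple closed curves $c_1,\dots,c_m$ in the $2$--disc $D$. Each $c_i$ bounds a subdisc of $D$ since $D$ itself is a disc; the only issue is to understand how these curves are arranged.

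The key observation is that the curves $c_i$ cannot be nested. Indeed, the components of $D\cap H_{n-1}$ are exactly the regions of $D-\bigcup c_i$ lying on the $H_{n-1}$--side of $\partial H_{n-1}$, and $\partial D\subseteq\partial H_n$ forces the outermost region of $D$ to lie outside $H_{n-1}$. If two of the curves were nested, the annular region between them would lie either entirely inside $H_{n-1}$ or entirely in its complement; in the first case it would give an annular component of $D\cap H_{n-1}$, contradicting condition (1) of Definition \ref{D:incidence_disc}, which requires each component to be an (essential) disc. Hence the $c_i$ are pairwise non-nested, each bounding a subdisc $\Delta_i\subseteq D$ containing no other $c_k$, and $D\cap H_{n-1}=\bigsqcup_{i=1}^m\Delta_i$, with each $\Delta_i$ an essential disc in $H_{n-1}$ by hypothesis.

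It then follows immediately that $D\cap(H_n-\mathring{H}_{n-1})=D-\bigsqcup_i\mathring{\Delta}_i$, a disc with $m$ disjoint open subdiscs removed, hence a planar surface with $m+1$ boundary circles: the single curve $\partial D\subseteq\partial H_n$ and the $m$ curves $c_i=\partial\Delta_i\subseteq\partial H_{n-1}$, the latter each bounding the essential disc $\Delta_i$ in $H_{n-1}$. The remark is thus a direct consequence of condition (1) of Definition \ref{D:incidence_disc}; condition (2) concerning $\Lambda$ plays no role in this observation. There is no substantive obstacle: once general position is achieved, the argument is purely combinatorial about simple closed curves on a disc, combined with the parity constraint provided by $\partial D\subseteq\partial H_n\subseteq\overline{H_n-H_{n-1}}$.
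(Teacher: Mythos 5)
Your overall picture---put $D$ transverse to $\partial H_{n-1}$, observe that the circles $c_i=D\cap\partial H_{n-1}$ cut $D$ into regions that alternate between $H_{n-1}$ and its complement, use $\partial D\subseteq\partial H_n$ to anchor the outermost region outside $H_{n-1}$, and then read off the non-nesting of the $c_i$ from the requirement that components of $D\cap H_{n-1}$ be discs---is the right elaboration of what the paper leaves to the reader, and your conclusion (and the observation that condition~(2) of Definition~\ref{D:incidence_disc} is not used) is correct.

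However, there is a genuine hole in the non-nesting step as you wrote it. You note that for a nested pair the annulus between them ``lies either entirely inside $H_{n-1}$ or entirely in its complement,'' dispose of the first case, and then conclude non-nesting---but you never address the second case, and it really can occur for an arbitrary nested pair (e.g.\ with $c_1\supset c_2\supset c_3$ mutually nested, the annulus between $c_2$ and $c_3$ lies \emph{outside} $H_{n-1}$). The fix is to be specific about which pair you take: choose $c$ to be a curve of minimal nesting depth among those that contain some other $c_j$, and take $c'\subseteq\Delta_c$ with no curve between them. Then $c$ borders the outermost region $R_0$, which is outside $H_{n-1}$, so by alternation the annular region between $c$ and $c'$ is inside $H_{n-1}$, which is the contradiction you want. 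Equivalently (and perhaps more cleanly): every component of $D\cap H_{n-1}$ is by hypothesis a disc, hence is the closure of a single region bounded by exactly one $c_i$, which forces that $c_i$ to be innermost; combined with the fact that the region just inside every depth-zero curve lies in $H_{n-1}$, this shows every $c_i$ is simultaneously depth-zero and innermost, i.e.\ the family is non-nested. Once that is in place, the remainder of your argument goes through as written.
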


\subsection{Standard incidence moves}

We enlarge $\mathcal{E}$ by adding incidence discs to it. This involves a construction
which obtains an incidence disc from a more general essential disc through an ambient isotopy (Lemma
\ref{L:make_incidence_tool} below). The isotopy need not change $\mathcal{E}$. This yields the notion below. By an {\em ambient isotopy} we mean an isotopy $H\times I\to H$ of the identity, i.e. $(p,0)\mapsto p$ for all $p\in H$. Its {\em support} is the closure of the complement of the fixed set $\{p\in H\,|\, (p,t)\mapsto p\, \textrm{, for all $t\in I$}\, \}$.

\begin{definition}\label{D:framed-isotopy}
A {\em framed isotopy of $H_0$} (or simply a {\em framed isotopy}) is an ambient isotopy $H\times I\to H$ with support in $H_0$ and fixing $\mathcal{E}_0\subseteq H_0$. Automorphisms isotopic through a framed isotopy are said to be {\em framed isotopic} in $H_0$.
We may abuse notation and say that an automorphism $H\to H$ supported in $H_0$ and framed isotopic to the identity is a {\em framed isotopy} in $H_0$.
Similarly we say two sub-manifolds $S, S'\subseteq H_0$ are {\em framed isotopic} if there exists a framed isotopy taking $S$ to $S'$.

In fact by extension we may refer to an ambient isotopy $H\times I\to H$ with support in some $H_n$ and fixing $\mathcal{E}_n\subseteq H_n$ as a {\em framed isotopy of $H_n$}. Also abusing notation we may say two automorphisms $H\to H$ isotopic through a framed isotopy of $H_n$ are {\em framed isotopic in $H_n$}. But unless clearly specifying an $H_n$ as setting we will typically consider as framed isotopy those supported in $H_0$.
\end{definition}

In \cite{LC:tightness}, Lemma 3.13, we proved the lemma stated below, after some straightforward adaptation of terminology and setting. Recall from Subsection \ref{SS:intro:terminology} that $(\mathcal{H}_{i_0},\hat\nu^{i_0})$ is the standard weighted handle decomposition of $H_{i_0}$, with vector $\hat\nu^{i_0}=\lambda^{i_0}\hat\nu$.

\begin{lemma}\label{L:make_incidence_tool}
Let $(D,\partial D)\subseteq(H_n,\partial H_n)$ be an $i_0$--transverse disc, for some $i_0<n$.
If $D\cap\mathcal{E}_n\subseteq\partial D$ then $D$ is framed isotopic in $H_n$ to an $i_0$--transverse incidence disc $D'\subseteq H_n$ such that:
\[
\inter{D'}{\mathcal{H}_{i_0}}{\hat\nu^{i_0}}\leq\inter{D}{\mathcal{H}_{i_0}}{\hat\nu^{i_0}}.
\]
\end{lemma}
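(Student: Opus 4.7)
The plan is to adapt the innermost-disc argument from Lemma 3.13 of \cite{LC:tightness}. Set $W = H_n - \mathring H_{n-1}$, which is a product region $\partial H_{n-1}\times I$. Since $D$ is a disc and $\partial D\subseteq \partial H_n$ is disjoint from $\partial H_{n-1}$, the intersection $D\cap \partial H_{n-1}$ is a finite collection of disjoint simple closed curves in $D$, decomposing $D$ into planar subsurfaces that lie alternately in $H_{n-1}$ and $W$. The goal is twofold: first, to remove, through a framed isotopy in $H_n$, every circle of $D\cap \partial H_{n-1}$ whose innermost-side disc is inessential, so that each remaining component of $D\cap H_{n-1}$ is an essential disc; and second, to perturb the result so that $D\cap\Lambda\subseteq\partial D$. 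All of this must be achieved while keeping $\inter{D}{\mathcal{H}_{i_0}}{\hat\nu^{i_0}}$ non-increasing.

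For the first part I will iteratively pick an innermost circle $\gamma\subseteq D\cap\partial H_{n-1}$ bounding a subdisc $\delta\subseteq D$. If $\delta\subseteq W$, then because $\partial H_{n-1}$ is incompressible in the product $W$ the curve $\gamma$ bounds a disc $\delta'\subseteq\partial H_{n-1}$, so $\delta\cup\delta'$ cobounds a ball $B\subseteq W$; push $D$ across $B$, strictly decreasing $\#(D\cap\partial H_{n-1})$. If $\delta\subseteq H_{n-1}$ and $\delta$ is inessential, then $\gamma$ bounds a disc $\delta'$ in $\partial H_{n-1}$, and by irreducibility of the handlebody $H$ the sphere $\delta\cup\delta'$ bounds a ball $B\subseteq H$; again push $D$ across. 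The process terminates, at which stage every surviving component of $D\cap H_{n-1}$ is an essential disc in $H_{n-1}$. For the second part I perform a final small framed perturbation along the $I$-fibers of the $1$-handles of $\mathcal{H}_n$, shifting the finitely many levels of the dual-disc components of $D\cap H_n$ off the nowhere-dense transverse support of $\Lambda\cap H_n$.

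The main obstacle, and where the hypothesis $D\cap\mathcal{E}_n\subseteq\partial D$ is genuinely used, is ensuring that these surgeries are framed (fix $\mathcal{E}_n$ pointwise, or at least setwise, within $H_n$) and do not increase the weighted intersection. The hypothesis forces $\delta\cap\mathcal{E}_n=\emptyset$, so any components of $\mathcal{E}_n\cap B$ are properly embedded discs in the ball $B$ with boundary in $\delta'\subseteq\partial H_{n-1}$. These boundary circles bound subdiscs of $\delta'$; a standard innermost-disc swap on $\delta'$ lets one replace $\delta'$ by an isotopic disc disjoint from $\mathcal{E}_n$, producing a new ball $B$ disjoint from $\mathcal{E}_n$ across which the push is a framed isotopy. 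By formula~(\ref{eq:intersection-n}) the push eliminates exactly the components of $D\cap H_{i_0}$ trapped inside $B$ without creating new ones, so the weighted intersection cannot increase, and the final $I$-fiber perturbation leaves the combinatorial count unchanged. The resulting $D'$ is then the sought $i_0$-transverse incidence disc.
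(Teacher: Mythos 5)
Your procedure, restricted to surgery along circles that are \emph{innermost in $D$}, implements only the first of the three standard incidence moves of Remark~\ref{R:making_incidence}, and that is not enough. When your iteration halts you have arranged only that every innermost subdisc of $D$ cut off by a circle of $D\cap\partial H_{n-1}$ is an essential disc in $H_{n-1}$; but these innermost subdiscs are precisely the components of $D\cap H_{n-1}$ that are \emph{already} discs, and nothing in your argument forces the remaining components of $D\cap H_{n-1}$ to be discs at all. For a concrete failure, let $D\cap\partial H_{n-1}$ consist of three nested circles $\gamma_1\subset\gamma_2\subset\gamma_3$ in $D$ (with $\partial D$ outermost) such that the innermost subdisc $\delta_1\subseteq H_{n-1}$ is essential, the annulus between $\gamma_1$ and $\gamma_2$ lies in $W=H_n-\mathring H_{n-1}$, and the annulus between $\gamma_2$ and $\gamma_3$ lies in $H_{n-1}$. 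Only $\gamma_1$ is innermost, and $\delta_1$ is essential, so your iteration stops immediately; yet $D\cap H_{n-1}$ has an annular component and $D$ is not an incidence disc. Your assertion ``every surviving component of $D\cap H_{n-1}$ is an essential disc'' is therefore unjustified.

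The missing ingredients are exactly the second and third standard incidence moves of Remark~\ref{R:making_incidence}: compressing a compressible component of $D\cap W$ along a compressing disc inside $W$, and isotoping a $(\partial H_{n-1})$--parallel component of $D\cap W$ across the product region it cobounds, out of $W$. In the example above the middle annulus of $D\cap W$, being incompressible with both boundary circles on $\partial H_{n-1}$, is $(\partial H_{n-1})$--parallel, and move (3) removes it, merging $\delta_1$ with the outer annular component of $D\cap H_{n-1}$ into a single disc. Once these moves are added your termination quantity $\#(D\cap\partial H_{n-1})$ still strictly decreases at each step, and the remaining points of your write-up --- choosing the balls and product regions disjoint from $\mathcal{E}_n$ via an innermost-disc swap using the hypothesis $D\cap\mathcal{E}_n\subseteq\partial D$, the monotonicity of the $\hat\nu^{i_0}$--count, and the final $I$--fiber perturbation to push $D$ off $\Lambda$ --- are along the right lines and match the intent of Remark~\ref{R:making_incidence}.
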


\begin{obs}\label{R:making_incidence}
 The proof of Lemma \ref{L:make_incidence_tool} consists of changing $D$ using three kinds of moves (or ``processes''
as in \cite{LC:tightness}). These are:
\begin{enumerate}
 \item If $D\cap\partial{H}_{n-1}$ is inessential in $\partial H_{n-1}$, bounding an innermost disc
$\Delta\subseteq\partial H_{n-1}$, {\em compress} $D$ along $\Delta$, i.e. surger $D$ on $\Delta$ then discard the resulting sphere;
 \item If $D\cap(H_{n}-\mathring{H}_{n-1})$ is compressible, compress it along a compressing disc $\Delta\subseteq(H_{n}-\mathring{H}_{n-1})$.
 \item If $D\cap(H_{n}-\mathring{H}_{n-1})$ contains a component which is $(\partial H_{n-1})$--parallel, isotope it using the parallelism.
\end{enumerate}

We call these {\em standard incidence moves}. They are all realizable by framed isotopies of $H_n$:
for the first two, by an isotopy along a ball, the third by and isotopy through a product. None alter $\partial D$. Framed isotopies of $H_n$ do not introduce intersections with $\mathcal{E}_n$. Moreover, these do not introduce intersection of $D$ with $H_{i_0}$. In
particular the weighted intersection $\inter{D}{\mathcal{H}_{i_0}}{\hat\nu^{i_0}}$ is not increased. To complete the proof of the
lemma one has to verify that a finite sequence of such moves yields a disc intersecting $H_{n-1}$ in essential discs.
\end{obs}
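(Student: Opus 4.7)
The plan is to iteratively apply the three standard incidence moves described in Remark \ref{R:making_incidence} until both defining conditions of an incidence disc are achieved. I would need to verify three things: (i) each move can be realized as a framed isotopy of $H_n$ that does not increase $\inter{D}{\mathcal{H}_{i_0}}{\hat\nu^{i_0}}$; (ii) the process terminates; and (iii) the terminal disc satisfies both conditions of Definition \ref{D:incidence_disc}.

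For (i), since $D\cap\mathcal{E}_n\subseteq\partial D$ by hypothesis, each move's support---a regular neighborhood of a $3$-ball (moves 1 and 2) or of the product realizing a boundary parallelism (move 3)---can be chosen disjoint from $\mathcal{E}_n$, so the isotopy is framed. Because $i_0<n$ gives $H_{i_0}\subseteq H_{n-1}$, each supporting region can further be chosen inside $H_n-\mathring{H}_{i_0}$; in particular $D\cap H_{i_0}$ is left unchanged (up to an $I$-fiber-preserving reparametrization), so by formula (\ref{eq:intersection-n}) the weighted intersection is preserved throughout.

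For (ii), I would use a standard complexity argument: the lexicographic pair consisting of the number of components of $D\cap\partial H_{n-1}$ followed by the negative Euler characteristic of $D\cap(H_n-\mathring{H}_{n-1})$. Move 1 strictly decreases the first coordinate; moves 2 and 3 strictly decrease the second while fixing the first. For (iii), when no further move applies, $D\cap\partial H_{n-1}$ is essential in $\partial H_{n-1}$ and $D\cap(H_n-\mathring{H}_{n-1})$ is incompressible with no $\partial H_{n-1}$-parallel component, whence standard handlebody theory gives that $D\cap H_{n-1}$ consists of essential discs, yielding condition (1) of Definition \ref{D:incidence_disc}. For condition (2), the observation that $\Lambda\cap H_n$ is carried by the $1$-handles $\mathcal{H}_n^1$ with $\mathcal{E}_n$ as dual discs, combined with the preserved hypothesis $D\cap\mathcal{E}_n\subseteq\partial D$, implies that any interior intersection of $D$ with $\Lambda$ lies inside a $1$-handle; a final small $I$-fiber-preserving push eliminates these without changing $\partial D$ or the weighted intersection.

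The main technical obstacle will be showing that the lexicographic complexity truly decreases monotonically---in particular that moves 2 and 3 do not create new innermost inessential loops on $\partial H_{n-1}$ that would retrigger move 1. I would handle this by executing the moves in the prescribed order (exhaust move 1 first, then move 2, then move 3) and observing that compressing along an incompressible subsurface of $H_n-\mathring{H}_{n-1}$, or sliding through a product region, cannot produce an inessential component of $D\cap\partial H_{n-1}$; so the iteration is finite and terminates at an incidence disc $D'$ with the required measure inequality.
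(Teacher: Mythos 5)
Your plan has the right skeleton (realize the three moves as framed isotopies, argue termination, verify the terminal disc), but there is a concrete error in part (i) that matters. You claim the supporting region of each move ``can further be chosen inside $H_n-\mathring{H}_{i_0}$'' so that $D\cap H_{i_0}$ is left unchanged and the weighted intersection is preserved. This is false for moves 1 and 2. Those moves are realized by isotoping across a ball $B$ whose boundary is $D'\cup_\gamma\Delta$, where $\Delta$ is the compressing disc (in $\partial H_{n-1}$ for move 1, in the interior of $H_n-\mathring{H}_{n-1}$ for move 2) and $D'$ is the discarded subdisc of $D$ bounded by $\gamma$. While $\Delta$ itself avoids $\mathring{H}_{i_0}$, the subdisc $D'$ can dip arbitrarily deep through the nested handlebodies $H_{n-1}\supseteq H_{n-2}\supseteq\cdots\supseteq H_{i_0}$; indeed for move 2, if $D'$ were disjoint from $\partial H_{n-1}$ it would lie entirely in $H_n-\mathring{H}_{n-1}$, contradicting the assumption that $\gamma$ is essential in $D\cap(H_n-\mathring{H}_{n-1})$. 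Hence the ball $B$, and therefore the support $N(B)$ of the isotopy, can meet $H_{i_0}$, and $D\cap H_{i_0}$ is generally not preserved---it can strictly decrease. Correspondingly, the remark claims only that the weighted intersection is ``not increased,'' which is the statement that is actually true.

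The correct argument here, which the paper carries out explicitly in its proof of part (2) of Lemma \ref{L:make_incidence}, is more delicate: one gives $B$ a product structure $B\simeq D^2\times I$ realizing the parallelism between $D'$ and $\Delta$, observes that points of $H_{i_0}\cap N(B)$ are carried by $h$ into $N(\Delta)$ (which misses $H_{i_0}$, so intersections may be removed), and then shows separately that no new intersections are created, because any point carried into $H_{i_0}$ would have to originate in $N(D')$, which is disjoint from the surface being tracked. You should replace your support-confinement claim by this removal/no-creation argument. (For move 3 your claim is fine, since the product $P\subseteq H_n-\mathring{H}_{n-1}$ really is disjoint from $H_{i_0}$ when $i_0<n-1$, and when $i_0=n-1$ the disc is already an incidence disc so move 3 never fires.) As a secondary remark, your lexicographic bookkeeping in (ii) should claim only that moves 2 and 3 do not increase the first coordinate (they can decrease it, as the argument above shows), not that they fix it.
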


The following is an improvement of the lemma. A simpler version has already been exploited in \cite{LC:tightness}.

\begin{lemma}\label{L:make_incidence}
Let $\mathcal{D},\mathcal{S}\subseteq H_0$ be disjoint disc systems transverse to some $\mathcal{H}_{i_0}$, $i_0<0$,
both disjoint from
$\mathcal{E}_0$. Suppose also that $\mathcal{D}$ consists of incompressible incidence discs. There exists $h\colon
H\to H$ framed isotopic in $H_0$ to the identity and such that
\begin{enumerate}
    \item $h|_\mathcal{D}=\Id|_\mathcal{D}$ and $h(\mathcal{S})$ consists of $i_0$-transverse incompressible incidence discs with
        \[
        \inter{h(\mathcal{S})}{\mathcal{H}_{i_0}}{\hat\nu^{i_0}}\leq \inter{\mathcal{S}}{\mathcal{H}_{i_0}}{\hat\nu^{i_0}};
        \]
    \item if $G\subseteq H_0$ is an $i_0$--transverse embedded surface disjoint from $\mathcal{S}$ (not necessarily a
disc) then $h(G)$ is  ${i_0}$--transverse and intersection with $(\mathcal{H}_{i_0},\hat\nu)$ is not increased:
        \[
        \inter{h(G)}{\mathcal{H}_{i_0}}{\hat\nu^{i_0}}\leq \inter{G}{\mathcal{H}_{i_0}}{\hat\nu^{i_0}}.
        \]
\end{enumerate}
\end{lemma}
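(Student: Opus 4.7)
My approach is to build $h$ as a composition $h = h_k\circ\cdots\circ h_1$, processing the discs $\mathcal{S}=\{S_1,\dots,S_k\}$ one at a time, each $h_j$ being the framed isotopy produced by applying Lemma \ref{L:make_incidence_tool} to the current image of $S_j$. At stage $j$, having already turned $S_1,\dots,S_{j-1}$ into $i_0$--transverse incompressible incidence discs, the auxiliary disc system $\mathcal{D}_j^+:=\mathcal{D}\cup\{h_{j-1}\cdots h_1(S_i) : i<j\}$ consists of incompressible incidence discs disjoint from the current $S_j$, and it is precisely $\mathcal{D}_j^+$ that $h_j$ must fix pointwise. The transverse surface $G$ in condition (2) is treated uniformly: once each $h_j$ is chosen to be $I$--fiber preserving on $\mathcal{H}_{i_0}$, the non--increase bound follows simultaneously for all admissible $G$.

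Each $h_j$ is itself a finite composition of the three standard incidence moves of Remark \ref{R:making_incidence}, each realized as an ambient isotopy pushing a subdisc $\Delta'\subseteq S_j$ across a ball $R$ (moves (i)--(ii)) or through a product region $R$ (move (iii)), with $\partial R=\Delta'\cup\Delta$ for some auxiliary disc $\Delta$. Since $\mathcal{D}_j^+$ is disjoint from $S_j$, the intersection $\mathcal{D}_j^+\cap\partial R$ reduces to $\mathcal{D}_j^+\cap\Delta$. Using that $\mathcal{D}_j^+$ consists of incompressible discs and that $H$ is irreducible, standard innermost--circle and outermost--arc surgeries on $\mathcal{D}_j^+\cap\Delta$ strictly reduce $|\Delta\cap\mathcal{D}_j^+|$ at each step, and after finitely many such surgeries we obtain a $\Delta$ disjoint from $\mathcal{D}_j^+$. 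The isotopy realizing the move may then be chosen with support in a thin neighborhood of the resulting $R$, verifying condition (1).

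For condition (2), I would further arrange the isotopy across $R$ to respect the $I$--fibers of $\mathcal{H}_{i_0}$ wherever $R$ meets them. By admissibility of $\mathcal{H}_{i_0}\subseteq\mathcal{H}_0$, the $1$--handles of $\mathcal{H}_{i_0}$ intersect $R$ in subproducts of its $I$--fibration, so a fiber--preserving isotopy across $R$ preserves the intersection count of any $i_0$--transverse surface $G$ with dual discs of $\mathcal{H}_{i_0}$, giving (2) for arbitrary $G$ disjoint from $\mathcal{S}$. The non--increase inequality for $S_j$ in (1) is already provided by Lemma \ref{L:make_incidence_tool}; combined over the composition, these bounds assemble into the required global inequalities for $h(\mathcal{S})$ and $h(G)$.

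The main obstacle I expect is executing conditions (1) and (2) in tandem: the innermost--circle/outermost--arc surgeries required to free $\Delta$ from $\mathcal{D}_j^+$ can destroy any fibration alignment needed for (2), and a fiber--compatible $\Delta$ chosen first may pick up new intersections with $\mathcal{D}_j^+$. I plan to handle this by performing the two adjustments in sequence, first isotoping $\Delta$ rel $\partial\Delta$ to be disjoint from $\mathcal{D}_j^+$ and then, working inside the complement $H_0\setminus\mathcal{D}_j^+$ (which inherits the $I$--fibration of $\mathcal{H}_{i_0}$ from admissibility), further isotoping $\Delta$ into fiber--compatible position. Careful bookkeeping will be needed to confirm that the second adjustment does not reintroduce intersections with $\mathcal{D}_j^+$ and does not destroy the weighted intersection bound on $S_j$ obtained from Lemma \ref{L:make_incidence_tool}.
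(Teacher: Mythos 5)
Your inductive structure (processing the discs of $\mathcal{S}$ one at a time, enlarging the auxiliary system to $\mathcal{D}_j^+$ at each stage) and your treatment of condition (1) — using incompressibility of $\mathcal{D}$ together with irreducibility of $H$ to free the auxiliary disc $\Delta$ from $\mathcal{D}_j^+$ by innermost-circle surgeries — match the paper's proof closely. (A small remark: $\mathcal{D}_j^+\cap\Delta$ consists only of closed curves, since $\partial\Delta$ lies on the relevant face $D\subseteq S_j$ which is disjoint from $\mathcal{D}_j^+$, so there are no outermost arcs to dispose of; and for the third move the paper shows outright that the product region is automatically disjoint from $\mathcal{D}$.)

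Your approach to condition (2), however, has a genuine gap that you partially flag but do not resolve. You propose to make $h_j$ preserve $I$-fibers of $\mathcal{H}_{i_0}$ on $R$, so that the weighted intersection of $G$ is preserved. But such a fiber-preserving isotopy would equally preserve the weighted intersection of any $i_0$-transverse surface, including $S_j$ itself — and the whole point of the incidence moves (via Lemma \ref{L:make_incidence_tool}) is that they can strictly decrease $\inter{S_j}{\mathcal{H}_{i_0}}{\hat\nu^{i_0}}$, for example when a compression discards a sphere component meeting $H_{i_0}$. So demanding a fiber-preserving move is in tension with the very mechanism you need, and your closing paragraph acknowledges but does not close this gap. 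The paper avoids fibers entirely. For the third move it observes that the support lies in $H_0-\mathring{H}_{-1}$, which is disjoint from $H_{i_0}$ whenever $i_0\le -2$ (and the move never occurs when $i_0=-1$). For the first two moves it exploits the asymmetric geometry of the support $N(B)=N(D)\cup B\cup N(\Delta)$: the face $D\subseteq S_j$ is disjoint from $G$, while the face $\Delta\subseteq H_0-\mathring H_{-1}$ has $N(\Delta)\cap H_{i_0}=\emptyset$. Then a point of $G\cap H_{i_0}$ in $B$ is pushed into $N(\Delta)$, hence out of $H_{i_0}$, and conversely if $h(p)\in H_{i_0}$ for $p$ in the support, then $h(p)\notin N(\Delta)$, so $p\in h^{-1}(B\cup N(D))\subseteq N(D)$, which is disjoint from $G$. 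This tracing argument gives (2) directly, with no fibration constraint on the isotopy, and is the step your proposal still needs.
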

\begin{proof}
By induction it suffices to prove the result for $\mathcal{S}=\{ S\}$ consisting of a single disc. Indeed, suppose that
the result is true for any $\mathcal{S}^j=\{ S_1,\dots, S_j\}$ with $j\in\mathbb{Z}$, $j\geq 0$ components. Note that
the case $j=0$ means $\mathcal{S}^j=\emptyset$, for which the result is obviously true ($h=\Id$). Let
$\mathcal{S}^{j+1}=\{S_1,\dots,S_j,S_{j+1}\}$ have $j+1$ components and consider
$\mathcal{S}^j\subsetneq\mathcal{S}^{j+1}$. The induction hypothesis says (1) that there exists framed isotopy $h_j$
such that $h_j(\mathcal{D})=\mathcal{D}$, $h_j(\mathcal{S}^j)$ consisting of $i_0$--transverse incompressible incidence
discs with
\[
\inter{h_j(\mathcal{S}^j)}{\mathcal{H}_{i_0}}{\hat\nu^{i_0}}\leq \inter{\mathcal{S}^j}{\mathcal{H}_{i_0}}{\hat\nu^{i_0}}.
\]
Moreover, by (2) $h_j(S_{j+1})$ is an $i_0$--transverse essential disc with
$\inter{h_j(S_{j+1})}{\mathcal{H}_{i_0}}{\hat\nu^{i_0}}\leq \inter{S_{j+1}}{\mathcal{H}_{i_0}}{\hat\nu^{i_0}}$.

Therefore consider $\mathcal{D}'=\mathcal{D}\cup h_j(\mathcal{S}^j)$ and $\mathcal{S}'=\{ S_{j+1}\}$. If we prove the result
for such an $\mathcal{S}'$ with just one component (which is the induction step) then we obtain $h$ satisfying properties
(1) and (2) as in the statement. It is easy to check that $h_{j+1}=h\circ h_j$ has the desired properties for
$\mathcal{D}$ and $\mathcal{S}^{j+1}$.
We now prove the induction step, assuming $\mathcal{S}=\{ S\}$. We verify both properties, one at a time.

\medskip

(1) According to Lemma \ref{L:make_incidence_tool} (see also remark \ref{R:making_incidence}) there exists a sequence of
standard incidence moves in $H_0-\mathring{H}_{-1}$ so that $S$ intersects $H_{-1}$ in $i_0$--transverse essential discs
without increasing intersection with $(\mathcal{H}_{i_0},\hat\nu)$.
We now prove that each of these standard incidence moves $h$ may be chosen not to introduce intersections of $S$ with
$\mathcal{D}$ and $h|_\mathcal{D}=\Id_\mathcal{D}$.

The first two types of standard incidence moves are compressions, where the compressing disc $\Delta\subseteq
(H_{0}-\mathring{H}_{-1})$: for the first move $\Delta\subseteq\partial H_{-1}$, while for the second $\Delta$ is
contained in the interior of $(H_{0}-\mathring{H}_{-1})$.
We deal with the two cases as one. In both $\Delta\subseteq \mathring{H}_0$. Let $D\subseteq S$ be the disc bounded by
$\partial\Delta=\partial D$. Also $D\subseteq \mathring{H}_0$. Moreover $D\cup\Delta$ is an embedded sphere bounding a
ball $B\subseteq \mathring{H}_0$ with $B\cap\mathcal{E}=\emptyset$. The move $h$ may be chosen so that its support
$N(B)$ satisfies $N(B)\subseteq\mathring{H}_0$ and $N(B)\cap\mathcal{E}=\emptyset$. If $\mathcal{D}\cap B=\emptyset$ we
can also assume $\mathcal{D}\cap N(B)=\emptyset$, so that $h$ preserves $\mathcal{D}$.

Assume then that $\mathcal{D}\cap B\neq\emptyset$. Recall that $B\subseteq\mathring{H}_0$, hence $B\cap\partial
H_0=\emptyset$. Since on every component $C\in\mathcal{D}$,  $\emptyset\neq\partial C\subseteq\partial H_0$ we see that
$C\nsubseteq B$. Therefore $\mathcal{D}\cap\partial B\neq\emptyset$. But $\partial B=D\cup \Delta$ and $\mathcal{D}\cap
D\subseteq\mathcal{D}\cap S=\emptyset$, therefore $\mathcal{D}\cap\Delta\neq\emptyset$ consists of closed curves. Such a
curve $\gamma\subseteq\Delta\subseteq (H_{0}-\mathring{H}_{-1})$ is therefore inessential in $(H_{0}-\mathring{H}_{-1})$.
Since $\mathcal{D}\cap (H_{0}-\mathring{H}_{-1})$ is incompressible we can change $\Delta$ by a framed isotopy (with support in $H_{0}-\mathring{H}_{-1}$) so that $\Delta\cap\mathcal{D}=\emptyset$. After repeating this finitely often we achieve $\mathcal{D}\cap B=\emptyset$. This proves that a standard incidence move $h$ of the first two
types may be chosen preserving $\mathcal{D}$.

The third type of standard incidence move consists of taking a component $F\subseteq S\cap(H_{0}-\mathring{H}_{-1})$
which is $(\partial H_{-1})$--parallel and isotoping it along a product $F\times [0,1]\simeq P\subseteq
(H_{0}-\mathring{H}_{-1})$,
where $F$ is identified with $F\times\{1\}$ and $F\times\{0\}\subseteq\partial H_i$. Call it $h$.

As before if $\mathcal{D}\cap P=\emptyset$ then $\mathcal{D}\cap N(P)=\emptyset$,
where $N(P)$ is the support of $h$. Then $h|_\mathcal{D}=\Id$.
We claim that $\mathcal{D}\cap P=\emptyset$ is always the case, so assume otherwise $\mathcal{D}\cap P\neq\emptyset$ by
contradiction.

It is easy to see that $P\cap\partial H_0=\emptyset$: indeed, $\partial F = F\cap(\partial H_0\cup\partial H_{-1})$. But
$\partial F\subseteq\partial H_{-1}$, hence $F\cap\partial H_0=\emptyset$. Therefore the same holds for the product:
$P\cap\partial H_0=\emptyset$.

As before, for every component $C\in\mathcal{D}$,  $\partial C\subseteq\partial H_0$. If $C\cap P\neq\emptyset$ for some
component, which is being assumed, then $C\cap\partial P\neq\emptyset$ and hence $\mathcal{D}\cap \partial
P\neq\emptyset$. Now $\partial P=F\times\{0,1\}$ and $\mathcal{D}\cap F\times\{1\}\subseteq\mathcal{D}\cap S=\emptyset$
by the  disjointness hypothesis. Therefore $\mathcal{D}\cap\partial P\subseteq F\times\{0\}\subseteq\partial H_{-1}$. If
$F'\subseteq\mathcal{D}\cap P$ is a component then $\partial F'\subseteq\partial H_{-1}$. But from $\mathcal{D}\cap
F\times\{1\}=\emptyset$ implies that $F'$ is also a component of $\mathcal{D}\cap(H_0-\mathring{H}_{-1})$. Hence $F'$
has a boundary component in $\partial H_0$ (see Remark \ref{Rmrk:incidence_disc}), a contradiction with $\partial
F'\subseteq\partial H_{-1}$. Therefore $\mathcal{D}\cap P=\emptyset$ proving that disjointness of $\mathcal{S}=\{S\}$
and $\mathcal{D}$ is preserved by the third standard incidence move.

This proves, using Lemma \ref{L:make_incidence_tool}, that if $\mathcal{S}=\{S\}$ there is a framed isotopy $h$ in $H_0$ (a composition of standard incidence moves) such that $h(\mathcal{D})=\mathcal{D}$ and $h(S)$ is an incompressible
$i_0$--transverse incidence disc satisfying:
\[
\inter{h(\mathcal{S})}{\mathcal{H}_{i_0}}{\hat\nu^{i_0}}\leq \inter{\mathcal{S}}{\mathcal{H}_{i_0}}{\hat\nu^{i_0}}.
\]

\medskip

(2) As before it suffices to prove the result for $h$ consisting of a single standard incidence move.

The third kind is the easiest, therefore first suppose $h$ realizes such a move. If $i_0=-1$ then
$S$ is already an incidence disc and hence $S\cap(H_0-\mathring{H}_{-1})$ consists of a planar surface with a boundary
component on $\partial H_0$ (see remark \ref{Rmrk:incidence_disc}). Therefore there is no $(\partial H_{-1})$--parallel
component, no incidence move of the third type and $h=\Id$. Assume then that $i_0\leq -2$. Recall notation from the
proof of (1) above: the support of $h$ is the neighborhood $N(P)$ of a product $P\subseteq H_{0}-\mathring{H}_{-1}$.
Therefore $N(P)\cap H_{i_0}=\emptyset$ and $h$ does not introduce intersection of any subset of $H$ with $H_{i_0}$. In
particular $h(G)\cap H_{i_0}=G\cap H_{i_0}$, where $G$ is the surface described in the statement.

As with the third kind of standard incidence move, each of the first two may be regarded as a framed isotopy along a product.
Indeed, recall from (1) the ball $B$ with $\partial B=D\cup \Delta$, where $D\subseteq S$ and $\Delta \subseteq
(H_{0}-\mathring{H}_{-1})$.
Regard $B$ with a product structure $B\simeq D^2\times I$ determining the parallelism
between $D$ and $\Delta$.  As in (1) if $G\cap B=\emptyset$ then $h(G)=G$ and then
$\inter{h(G)}{\mathcal{H}_{i_0}}{\hat\nu^{i_0}}=\inter{G}{\mathcal{H}_{i_0}}{\hat\nu^{i_0}}$.

Assume $G\cap B\neq\emptyset$. Consider neighborhoods $N(D)$, $N(\Delta)$ of $D$, $\Delta$ such that the support of $h$
is
$N(B)=N(D)\cup B\cup N(\Delta)$. Moreover that $h(B\cup N(\Delta))\subseteq N(\Delta)$ and $h^{-1}(B\cup N(D))\subseteq
N(D)$.
From the disjointness $G\cap D=\emptyset$ assume $G\cap N(D)=\emptyset$ and from $\Delta\subseteq
(H_{0}-\mathring{H}_{-1})$
that $N(\Delta)\cap H_{i_0}=\emptyset$. It follows that
if $p\in H_{i_0}\cap G\cap N(B)$ then $p\in B$ and hence $h(p)\in N(\Delta)$, which is disjoint from $H_{i_0}$.
In words, $h$ removes intersection of $G$ with $H_{i_0}$.

It remains to verify that $h$ does not introduce intersection of $G$ with $H_{i_0}$. Assume $h(p)\in H_{i_0}$, where $p\in
N(B)$
the support of $h$. From $N(\Delta)\cap H_{i_0}=\emptyset$ follows $h(p)\in B\cup N(D)$. By applying $h^{-1}$ we obtain
$p\in N(D)$.
But $G\cap N(D)=\emptyset$, hence $p\notin G$, proving that $h$ does not introduce intersections of $G$ with $H_{i_0}$.
Therefore
\[
 \inter{h(G)}{\mathcal{H}_{i_0}}{\hat\nu^{i_0}}\leq \inter{G}{\mathcal{H}_{i_0}}{\hat\nu^{i_0}}.
\]
\end{proof}
\begin{obs}\label{rm:make_incidence_improvement}
There is a sort of improvement of the lemma which is natural and easily obtained. If $\mathcal{D}$ has the property
that $\mathcal{D}\cap H_i$, $i_0\leq i\leq 0$ consists of incompressible incidence discs then the same property can be
realized for $h(\mathcal{S})$. The idea is to apply essentially the same procedure which we have described in $H_0-\mathring{H}_{-1}$ inductively
down the ``layers'' $H_{i+1}-\mathring{H}_i$.
\end{obs}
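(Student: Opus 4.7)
The plan is to prove the improvement by downward induction on $i$, from $i=-1$ down to $i=i_0$, applying the argument of Lemma \ref{L:make_incidence} separately in each layer $H_i - \mathring{H}_{i-1}$. The base case $i=-1$ is Lemma \ref{L:make_incidence} itself: it produces a framed isotopy $h_0$ of $H_0$ so that $h_0(\mathcal{S})$ consists of $i_0$-transverse incompressible incidence discs in $H_0$, with $\mathcal{D}$ preserved and weighted intersection with $(\mathcal{H}_{i_0},\hat\nu^{i_0})$ not increased.

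For the inductive step, suppose $h^{(i+1)}$ has been built as a composition of framed isotopies such that $h^{(i+1)}(\mathcal{S})\cap H_j$ is a system of incompressible incidence discs in $H_j$ for every $j$ with $i+1\leq j\leq 0$, with $\mathcal{D}$ preserved. Set $\mathcal{S}_i=h^{(i+1)}(\mathcal{S})\cap H_i$ and $\mathcal{D}_i=\mathcal{D}\cap H_i$: both are disc systems in $H_i$, the first $i_0$-transverse (by inheritance) and the second consisting of incompressible incidence discs in $H_i$ (by the strengthened hypothesis on $\mathcal{D}$), and they remain mutually disjoint since $\mathcal{D}$ stays disjoint from $h^{(i+1)}(\mathcal{S})$. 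Apply the argument of Lemma \ref{L:make_incidence} one layer deeper, with $H_i$ in the role of $H_0$ and $H_{i-1}$ in the role of $H_{-1}$: the three standard incidence moves of Remark \ref{R:making_incidence} produce a framed isotopy $h_i$, with support confined to $H_i-\mathring{H}_{i-1}$, such that $h_i(\mathcal{S}_i)\cap H_{i-1}$ consists of essential discs, with $\mathcal{D}_i$ preserved and the weighted intersection with $(\mathcal{H}_{i_0},\hat\nu^{i_0})$ not increased. Set $h^{(i)}=h_i\circ h^{(i+1)}$ and iterate until $i=i_0$, producing the desired $h=h_{i_0}\circ\cdots\circ h_0$.

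The critical verification, and the heart of the induction, is that post-composing with $h_i$ does not destroy the incidence property at outer levels $j\geq i+1$. Since $h_i$ has support in $H_i\subseteq H_j$, the portion of $h^{(i+1)}(\mathcal{S})$ lying in $H_j-H_i$ is fixed, while the portion in $H_i$ is merely repositioned by a framed isotopy of $H_i$; essentialness of the discs in $H_j$ is preserved, intersections with $H_{j-1}$ remain essential discs (either unchanged, or the $h_i$-image of an essential disc), and disjointness from $\Lambda$ off the boundary is maintained by choosing the supports of the standard incidence moves (balls and $I$-products confined to $H_i-\mathring{H}_{i-1}$) disjoint from $\Lambda$ and from $\mathcal{E}_0, \mathcal{E}_{-1},\ldots,\mathcal{E}_i$, which is possible because these form a finite collection of properly embedded discs.

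The main obstacle is precisely this layer-by-layer bookkeeping: confirming that moves executed in a deeper layer neither alter the incidence structure already achieved in outer layers nor break the framed condition (which requires fixing not only $\mathcal{E}_i$, as built into the framed isotopy in $H_i$, but also $\mathcal{E}_0$, as needed for $h$ to be framed in $H_0$). Both issues are handled by the locality of the supports of the standard incidence moves, which are neighborhoods of small balls or products that may always be arranged to avoid the finitely many discs in $\mathcal{E}_0\cup\mathcal{E}_{-1}\cup\cdots\cup\mathcal{E}_i$ as well as the relevant portions of $\Lambda$.
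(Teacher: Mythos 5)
Your proposal follows exactly the paper's intended strategy: downward induction on the layers $H_i-\mathring H_{i-1}$, running the three standard incidence moves in each layer while tracking that $\mathcal D$, the framing, the transversality, and the $\hat\nu^{i_0}$-weighted intersection are all preserved. That is what the remark means by ``apply essentially the same procedure inductively down the layers,'' and your identification of the two bookkeeping issues (outer-layer incidence structure is not disturbed; the composite stays framed in $H_0$) is correct.

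One justification is off the mark, however. You write that the supports of the moves ``may always be arranged to avoid the finitely many discs in $\mathcal E_0\cup\cdots\cup\mathcal E_i$ as well as the relevant portions of $\Lambda$, which is possible because these form a finite collection of properly embedded discs.'' Finiteness is not the right reason --- a compressing disc with prescribed boundary cannot in general be isotoped off an arbitrary finite family of properly embedded discs, and $\Lambda$ is a lamination, not a finite collection. The correct mechanism is the same one the paper uses in the proof of Lemma \ref{L:make_incidence} to clear $\Delta$ off $\mathcal D$: the pieces $\mathcal E_j\cap(H_i-\mathring H_{i-1})$ are incompressible in the layer (because $\mathcal E_j\cap H_p$ is carried by $\mathcal E_p$ for $p\leq j$), they are disjoint from the disc $S$ being corrected, so any curve of $\Delta\cap\mathcal E_j$ is inessential in $\mathcal E_j$ and can be removed by an innermost-disc exchange; likewise the condition $D\cap\Lambda\subseteq\partial D$ built into the definition of incidence disc keeps $\Lambda$ out of the way. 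With ``finiteness'' replaced by this incompressibility argument, your proof is complete and coincides with what the remark intends.
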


\subsection{Tightening discs}
In \cite{LC:tightness} the author introduces the following special type of tightening disc:

\begin{definition}\label{D:strong}
Let $(\Delta,\partial\Delta)\subseteq (H_n,\partial H_n\cap \Lambda)$, $n\geq 1$ be an incidence
disc and $\Delta'\subseteq\Lambda$ be such that
$\partial\Delta'=\partial\Delta$. We say that $\Delta$ is a {\em
strong tightening disc} if:
\begin{enumerate}
  \item it is transverse to $\mathcal{H}_0$ and for any $0\leq i\leq n$, $\Delta\cap H_i$ consists of incidence
  discs, and
  \item $\inter{\Delta}{\cal{H}_0}{\hat\nu}<\inter{\Delta'}{\cal{H}_0}{\hat\nu}$.
\end{enumerate}
\end{definition}

Of course it is more convenient to work with strong tightening discs then general ones, and there is no loss of generality in considering those:

\begin{proposition}[\cite{LC:tightness}]\label{P:equivalence}
There exists a tightening disc if and only if there exists a
strong tightening disc.
\end{proposition}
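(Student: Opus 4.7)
My proof plan for Proposition \ref{P:equivalence}:

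The $(\Leftarrow)$ direction is essentially definitional. If $\Delta$ is a strong tightening disc, then since $\Delta$ is transverse to $\mathcal{H}_0$ it is automatically transverse to every $\mathcal{H}_j$ with $j\leq 0$ and hence to $\Omega$, so $\nu(\Delta)=\inter{\Delta}{\mathcal{H}_0}{\hat\nu}$. Moreover $\Delta'$ sits inside a leaf of $\Lambda$, and since $\Lambda\cap H_0\subseteq\bigcap_{i\geq 0}\mathcal{H}_i^1$, $\Delta'$ is automatically carried by the $1$--handles of $\mathcal{H}_0$, yielding $\nu(\Delta')=\inter{\Delta'}{\mathcal{H}_0}{\hat\nu}$. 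The strong tightening inequality then becomes $\nu(\Delta)<\nu(\Delta')$, so $\Delta$ is a tightening disc in the sense of Definition \ref{D:tight}.

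For the $(\Rightarrow)$ direction, start with a tightening disc $\Delta$ bounded by $\gamma\subseteq L$ with $\Delta'\subseteq L$ the subdisc bounded by $\gamma$, satisfying $\nu(\Delta)<\nu(\Delta')$. First, exploit that $\bigcup_j H_j=\mathring H$ and that $\Delta$ is compact to isotope $\Delta$ into some $H_n$, $n\geq 1$, and by a further small perturbation make it transverse to $\mathcal{H}_0$ (and hence to $\mathcal{H}_j$ for all $j\leq 0$), without altering $\nu(\Delta)$. Second, push the boundary onto $\Lambda\cap\partial H_n$: the leaf $L$ is an open disc exhausted by the nested compact discs $L\cap H_i$, so for $n$ large enough $\Delta'\subseteq L\cap H_n$. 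Let $A\subseteq L$ be the annulus bounded by $\gamma$ and $\partial(L\cap H_n)$, and replace $\Delta$ by $\tilde\Delta=\Delta\cup A$ and $\Delta'$ by $\tilde\Delta'=L\cap H_n$. Then $\partial\tilde\Delta\subseteq\partial H_n\cap\Lambda$, and since $\nu(A)=\nu(\tilde\Delta')-\nu(\Delta')$ the strict inequality is preserved:
\[
\nu(\tilde\Delta')-\nu(\tilde\Delta)=\nu(\Delta')-\nu(\Delta)>0.
\]

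Third, apply Lemma \ref{L:make_incidence_tool} inductively, working downward through the layers $H_n\supseteq H_{n-1}\supseteq\cdots\supseteq H_0$. At level $n$ we use the lemma (with $i_0=0$) to modify $\tilde\Delta$ by a framed isotopy so that $\tilde\Delta\cap H_{n-1}$ consists of essential discs, without increasing $\inter{\tilde\Delta}{\mathcal{H}_0}{\hat\nu}$. We then apply the lemma to each component of $\tilde\Delta\cap H_{n-1}$ to make its intersection with $H_{n-2}$ essential, and so on down to $H_0$. To secure the second clause of Definition \ref{D:incidence_disc} at every level, any circles of intersection of $\tilde\Delta$ with $\Lambda$ in the interior can be removed by compression along subdiscs of $\Lambda$--leaves (which are discs), again without increasing the weighted intersection. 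After these moves $\tilde\Delta$ is transverse to $\mathcal{H}_0$, has $\tilde\Delta\cap H_i$ consisting of incidence discs for every $0\leq i\leq n$, and satisfies
\[
\inter{\tilde\Delta}{\mathcal{H}_0}{\hat\nu}\;\leq\;\nu(\tilde\Delta)\;<\;\nu(\tilde\Delta')\;=\;\inter{\tilde\Delta'}{\mathcal{H}_0}{\hat\nu},
\]
so $\tilde\Delta$ is a strong tightening disc.

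The main obstacle I anticipate is the bookkeeping in Step 3: one must simultaneously guarantee all three incidence-disc conditions at every level while ensuring that no simplification move either raises $\inter{\tilde\Delta}{\mathcal{H}_0}{\hat\nu}$ or moves $\partial\tilde\Delta$ off $\partial H_n\cap\Lambda$. The framed-isotopy framework of Lemma \ref{L:make_incidence_tool} is tailored precisely so that these moves are supported in the interior of each layer $H_{i+1}-\mathring H_i$, hence fix the boundary and are non-increasing on the weighted measure; the strict inequality set up in Step 2 then propagates through to the final strong tightening disc.
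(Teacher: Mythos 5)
The paper does not reprove Proposition \ref{P:equivalence}; it cites \cite{LC:tightness}, so your argument can only be judged on its own terms. The $(\Leftarrow)$ direction and the overall shape of the $(\Rightarrow)$ direction (pushing $\partial\Delta$ onto $\Lambda\cap\partial H_n$, then applying Lemma \ref{L:make_incidence_tool} layer by layer) are sensible, but Step~3 has a genuine gap.

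Lemma \ref{L:make_incidence_tool} carries the hypothesis $D\cap\mathcal{E}_n\subseteq\partial D$, which you never arrange for $\tilde\Delta$ (nor, descending through the layers, for the components of $\tilde\Delta\cap H_j$ relative to $\mathcal{E}_j$). Removing an interior circle $\alpha$ of $\tilde\Delta\cap\mathcal{E}_n$ by surgery along an innermost subdisc $D_\alpha\subseteq E\in\mathcal{E}_n$ replaces the subdisc $D_1\subseteq\tilde\Delta$ bounded by $\alpha$ with $D_\alpha$; since $E$ is a core of a $1$--handle of $\mathcal{H}_n$ it has positive $\nu$--measure, so $\nu(D_\alpha)>\nu(D_1)$ is entirely possible and the surgery can \emph{increase} $\nu(\tilde\Delta)$, destroying the strict inequality you need. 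The same objection applies to your assertion that interior circles of $\tilde\Delta\cap\Lambda$ ``can be removed by compression along subdiscs of $\Lambda$--leaves \dots without increasing the weighted intersection'': this is not automatic, because again $\nu(D_\alpha)$ may exceed $\nu(D_1)$. What is missing is an extremal choice, e.g.\ among all transverse discs bounded by simple closed curves in $\Lambda$--leaves that violate tightness pick one with $\nu$ minimal (possible because $\nu$ takes values in a discrete set of nonnegative integer combinations of the entries of $\hat\nu$), and among those minimize interior intersection with $\mathcal{E}_n\cup\Lambda$; then whenever $\nu(D_\alpha)>\nu(D_1)$ the pair $(D_1,D_\alpha)$ would itself be a smaller violator of tightness, a contradiction. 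Without some such argument the inequality closing Step~3 is not justified. A minor additional point: making $\Delta$ transverse to $\mathcal{H}_0$ in Step~1 is not a small perturbation, since $\Delta$ must be pushed out of the $0$--handles of $\mathcal{H}_0$; it is cleaner to note that transversality to $\Omega$ gives transversality to some $\mathcal{H}_{i_0}$, and then replace $(\Delta,\Delta')$ by $(f^{-i_0}\Delta,f^{-i_0}\Delta')$, using $f$--invariance of $\Lambda$ and $\Omega$, to normalize to $i_0=0$.
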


In this paper we also need a special type of tightening disc which has convenient properties but is also general enough.
Strictly speaking they are not tightening discs as in Definition \ref{D:tight}, but are naturally associated to these,
see Remark \ref{rm:parallel}. Recall $(\cal{H}_i,\hat\nu^i)$ is the standard weighted handle decomposition of $H_i$ (see
Subsection \ref{SS:intro:terminology}).

\begin{definition}\label{D:parallel_tightening_disc}
An incidence disc $(\Delta,\partial\Delta)\subseteq(H_n,\partial H_n)$ is a {\em parallel tightening disc} if $\Delta$
is disjoint from $\mathcal{E}_n$, parallel to a disc $E\in\mathcal{E}_n$ and
$\inter{\Delta}{\Omega}{\nu}<\inter{E}{\Omega}{\nu}$. If $\Delta$ is transverse to $\mathcal{H}_i$ we say that it is a
{\em parallel tightening disc with respect to $H_i$} or in short just an {\em $i$--tightening disc}.
\end{definition}

\begin{obs}\label{rm:parallel}
We mentioned that parallel tightening discs and strong tightening discs are naturally related. This is how to obtain a disc of a type from one of the other.

If $\Delta$ is a strong tightening disc one can obtain a $0$--tightening disc by essentially pushing it away from
$\Lambda$. More precisely, if $\Delta'\subseteq\Lambda$ is the disc bounded by $\partial\Delta$ then $\Delta'$ is a dual disc, parallel to some $E\in\mathcal{E}_n$. Use the product structure $E\times I$ of the corresponding $1$--handle to
push $\Delta$ along the intervals away from it, see Figure \ref{F:strong-parallel}. That yields a $0$--tightening disc $D$.

{\def\scalesize{0.3}

\begin{figure}[ht]
\centering
\mbox{\subfigure{\psfrag{Delta'}{\fontsize{\figurefontsize}{12}$\Delta'$}
\psfrag{Delta}{\fontsize{\figurefontsize}{12}$\Delta$}
\psfrag{Lambda}{\fontsize{\figurefontsize}{12}$\Lambda$}
\includegraphics[scale=\scalesize]{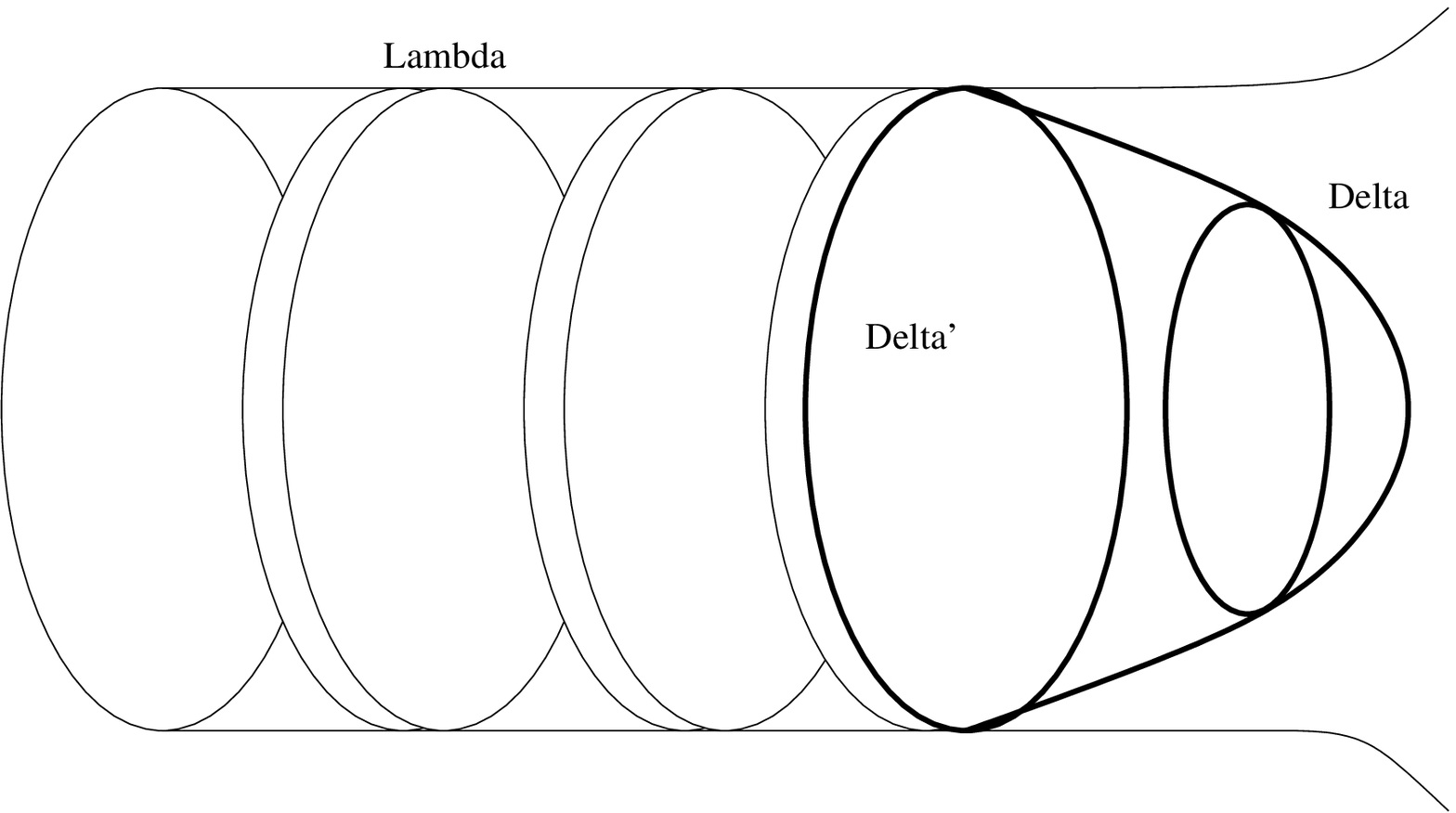}}
\quad \subfigure{\includegraphics[scale=\scalesize]{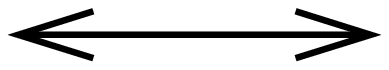}} \quad
\subfigure{\psfrag{Delta'}{\fontsize{\figurefontsize}{12}$\Delta'$}
\psfrag{D}{\fontsize{\figurefontsize}{12}$D$}
\psfrag{A'}{\fontsize{\figurefontsize}{12}$A'$}
\psfrag{Lambda}{\fontsize{\figurefontsize}{12}$\Lambda$}
\includegraphics[scale=\scalesize]{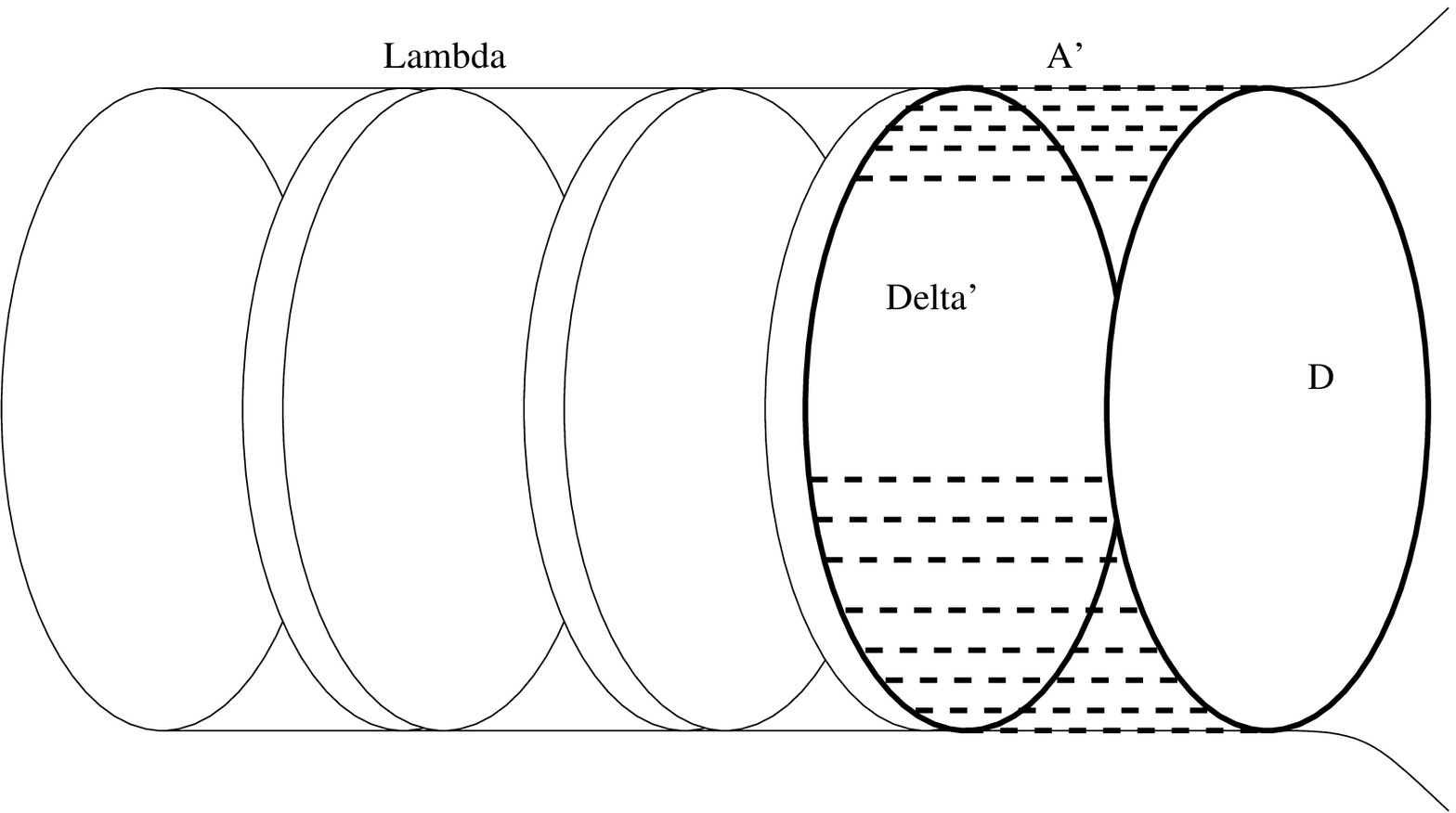}}}
\caption{\small A $1$--handle of $H_n$, being intersected by $\Lambda$. On the left it shows a strong tightening disc $\Delta$. On the right a parallel tightening disc $D$. The figure illustrates how to obtain one from the other.}\label{F:strong-parallel}
\end{figure}
}

Now suppose $D'$ is an $i$--tightening disc. Then $D=f^{-i}(D')$ is a $0$--tightening disc. Let $n$ be such that
$(D,\partial D)\subseteq(H_n,\partial H_n)$. Therefore $D$ is parallel to some $E\in\mathcal{E}_n$. But $D$ is disjoint
from $\mathcal{E}_n$, therefore there is an annulus in $A\subseteq\partial H_n$ from $\partial D$ to $\partial E$. In
fact consider $A\cap\Lambda$, which consists of concentric essential closed curves (because $\Lambda$ is carried by
$\mathcal{E}_n$). Let $A'\subseteq A$ be the ``smallest'' annulus connecting $\partial D$ to $\Lambda\cap\partial H_n$. Now consider $D''=A'\cup D$ and push it (rel $\partial D''$) slightly into $H_n$, obtaining $\Delta$. Such a disc is not yet a strong tightening disc because $\Delta\cap H_j$, $2\leq j\leq n-1$ need not consist of incidence discs. But that can be arranged by applying  Lemma \ref{L:make_incidence_tool} repeatedly on successive $H_j$'s with $j$ decreasing from
$n-1$ to $2$ (see Remark
\ref{rm:make_incidence_improvement}).
\end{obs}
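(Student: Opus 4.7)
The plan is to verify the two constructions outlined in the remark, checking that each produces a disc of the claimed type, with the required measure inequality.

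\textbf{Strong to parallel.} Given a strong tightening disc $\Delta \subseteq H_n$, I would first observe that the disc $\Delta' \subseteq \Lambda$ bounded by $\partial \Delta$ must be a dual disc of some $1$--handle of $\mathcal{H}_n$, parallel to its core $E \in \mathcal{E}_n$: this is because $\Lambda \cap H_n$ is carried by $\mathcal{E}_n$, so each component of $\Lambda$ meeting a $1$--handle is a dual disc of it. I would then define an ambient isotopy supported in a small neighborhood of $\Delta'$ within the product $E \times I$ that slides $\Delta$ along the $I$--fibers by a short distance, yielding $D$ with $\partial D$ off $\Lambda$ and $D$ disjoint from $\mathcal{E}_n$ but still parallel to $E$ through the product structure. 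The parallel tightening inequality then follows from three observations: the push is through surfaces transverse to the $I$--fibers of $\mathcal{H}_n$, so $\inter{D}{\Omega}{\nu} = \inter{\Delta}{\Omega}{\nu}$; the discs $\Delta'$ and $E$ are both dual discs of the same $1$--handle, so $\inter{\Delta'}{\Omega}{\nu} = \inter{E}{\Omega}{\nu} = \lambda^n \hat\nu_{i(E)}$; and the strong--tightening inequality $\inter{\Delta}{\mathcal{H}_0}{\hat\nu} < \inter{\Delta'}{\mathcal{H}_0}{\hat\nu}$, translated via the level--independence of weighted intersection recalled in Subsection \ref{SS:intro:terminology}, becomes exactly $\inter{D}{\Omega}{\nu} < \inter{E}{\Omega}{\nu}$.

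\textbf{Parallel to strong.} Given an $i$--tightening disc $D'$, set $D = f^{-i}(D')$, a $0$--tightening disc in some $H_n$, parallel to $E \in \mathcal{E}_n$ and disjoint from $\mathcal{E}_n$. I would identify the thinnest sub--annulus $A' \subseteq \partial H_n$ cobounded by $\partial D$ and the nearest curve of $\Lambda \cap \partial H_n$, form the disc $D'' = A' \cup D$ along their common boundary $\partial D$, and push $D''$ slightly into $H_n$ rel $\partial D''$ to obtain $\Delta$ with $\partial \Delta \subseteq \Lambda$. Three verifications then show $\Delta$ is an incidence disc in $H_n$: $\partial \Delta \subseteq \Lambda \cap \partial H_n$ by construction; the interior of $\Delta$ meets $\Lambda$ only along $\partial \Delta$, because the collar just inside $\partial H_n$ lies in the $1$--handle of $\mathcal{H}_n$ around $E$ transverse to its $\Lambda$--leaves, while $D$ itself is disjoint from $\Lambda$ (its boundary sits in the annular region between two $\Lambda$--curves on $\partial H_n$); and $\Delta \cap H_{n-1}$ consists of essential discs because $A' \subseteq \partial H_n$ is disjoint from $H_{n-1}$, so $\Delta \cap H_{n-1}$ is a slight isotope of $D \cap H_{n-1}$, already essential by hypothesis.

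Promoting $\Delta$ to a strong tightening disc requires that $\Delta \cap H_j$ consist of incidence discs for every $0 \le j \le n - 1$. I would establish this inductively, with $j$ decreasing from $n-1$, by applying Lemma \ref{L:make_incidence_tool} in the layered form described in Remark \ref{rm:make_incidence_improvement} to each essential disc component of $\Delta \cap H_j$. Each such framed isotopy is supported in $H_j - \mathring{H}_{j-1}$ and does not increase weighted intersection, hence does not disturb the incidence structure already arranged on the outer layers. Finally, the strong--tightening inequality is verified as follows: the disc $\Delta^\star \subseteq \Lambda$ bounded by $\partial \Delta$ is a dual disc of the $1$--handle around $E$, so $\inter{\Delta^\star}{\mathcal{H}_0}{\hat\nu} = \inter{E}{\Omega}{\nu}$; the annulus $A'$ lies on $\partial H_n$ and contributes no $\Omega$--intersection; the small push can be chosen so that the added collar stays inside the $E$--handle without extra crossings of $\Omega$; and the layered incidence moves only decrease weighted intersection. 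Combining yields $\inter{\Delta}{\mathcal{H}_0}{\hat\nu} \le \inter{D}{\Omega}{\nu} < \inter{E}{\Omega}{\nu} = \inter{\Delta^\star}{\mathcal{H}_0}{\hat\nu}$. The delicate point that I expect to require the most care is precisely this measure bookkeeping through the annulus--plus--push operation: one must argue that the added collar $\Delta \setminus D$ contributes no spurious $\Omega$--intersection, which comes down to choosing the push small enough to keep the collar inside the single $1$--handle of $\mathcal{H}_n$ containing $E$.
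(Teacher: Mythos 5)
Your proposal follows the same two constructions the remark sketches — pushing a strong tightening disc off $\Lambda$ along $I$--fibers, and recovering a strong tightening disc from a $0$--tightening disc by capping with the annulus $A'$, pushing into $H_n$, and then applying Lemma~\ref{L:make_incidence_tool} layer by layer — and your measure bookkeeping (via level--independence of weighted intersection, the fact that $\Delta'$ and $E$ are dual discs of the same $1$--handle, and that $A'$ and the small push stay outside $H_{n-1}$ and hence off $\Omega$) correctly supplies the details the remark elides. This is essentially the paper's argument, carefully fleshed out.
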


\begin{corollary}\label{C:parallel_equivalence}
There exists a tightening disc if and only if there exists a parallel tightening disc with respect to some $H_i$.
\end{corollary}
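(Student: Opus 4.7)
The plan is to use Proposition \ref{P:equivalence} as the hinge and translate in both directions between strong tightening discs and parallel tightening discs using the two constructions outlined in Remark \ref{rm:parallel}, being careful to track how each intermediate move affects the weighted intersection with $(\mathcal{H}_0,\hat\nu)$.

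For the forward direction, I would start with a strong tightening disc $(\Delta,\partial\Delta)\subseteq(H_n,\partial H_n\cap\Lambda)$ furnished by Proposition \ref{P:equivalence}, together with its companion disc $\Delta'\subseteq\Lambda$. Since $\Delta'$ is a leaf disc of $\Lambda$ in the $1$-handle over some $E\in\mathcal{E}_n$, it is parallel to $E$ through the $I$-fiber product structure of that handle. I would then push $\Delta$ along those $I$-fibers until $\partial\Delta$ lies on $\partial H_n$ immediately beside $\partial E$, yielding a disc $D$ parallel to $E$ and disjoint from $\mathcal{E}_n$. Because the push is along $I$-fibers of each $\mathcal{H}_j$ for $j\leq n$, transversality is preserved and the components of $D\cap H_j$ carried by each element of $\mathcal{E}_j$ agree with those of $\Delta\cap H_j$; hence $D$ is an incidence disc and, using formula (\ref{eq:intersection-n}), $\inter{D}{\Omega}{\nu}=\inter{\Delta}{\mathcal{H}_0}{\hat\nu}<\inter{\Delta'}{\mathcal{H}_0}{\hat\nu}=\inter{E}{\Omega}{\nu}$, exhibiting $D$ as a parallel tightening disc with respect to $H_0$.

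For the converse, given an $i$-tightening disc $D'$ I would apply $f^{-i}$ to produce a $0$-tightening disc $D\subseteq H_n$ parallel to some $E\in\mathcal{E}_n$ through an annulus $A\subseteq\partial H_n$. The curves of $A\cap\Lambda$ are concentric (because $\Lambda\cap\partial H_n$ is carried by $\partial\mathcal{E}_n$), and I would let $A'\subseteq A$ be the smallest sub-annulus from $\partial D$ to $\Lambda\cap A$. Capping $D$ with $A'$ and pushing the result slightly into $H_n$ relative to its boundary gives an embedded disc $\Delta$ transverse to $\mathcal{H}_0$ with $\partial\Delta\subseteq\Lambda\cap\partial H_n$ and $\Delta\cap\Lambda\subseteq\partial\Delta$. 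To turn $\Delta$ into a strong tightening disc I would then iterate Lemma \ref{L:make_incidence_tool} through the layers $H_{n-1}\supseteq H_{n-2}\supseteq\cdots\supseteq H_0$ as in Remark \ref{rm:make_incidence_improvement}, making $\Delta\cap H_j$ an incidence disc for every $0\leq j\leq n$ by framed isotopies of $H_j$ that are standard incidence moves and therefore never raise the weighted intersection. A final application of Proposition \ref{P:equivalence} then produces a tightening disc.

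The main obstacle, and the step that forces this little dance rather than an immediate appeal to Proposition \ref{P:equivalence}, is maintaining the strict inequality of Definition \ref{D:strong}(2). In the forward direction it is essentially automatic, since sliding along $I$-fibers is measure-preserving. In the converse I would need to check two things: first, that the new leaf disc $\Delta'\subseteq\Lambda$ bounded by $\partial\Delta$ satisfies $\inter{\Delta'}{\mathcal{H}_0}{\hat\nu}=\inter{E}{\Omega}{\nu}$, which holds because $A'\subseteq\partial H$ contributes nothing to any transverse weighted count and the leaf disc differs from $E$ only by the boundary annulus $A'$; and second, that the cascade of standard incidence moves up the layers only decreases $\inter{\Delta}{\mathcal{H}_0}{\hat\nu}$ from its initial value $\leq\inter{D}{\Omega}{\nu}$, so that the gap $\inter{D}{\Omega}{\nu}<\inter{E}{\Omega}{\nu}$ of the parallel tightening hypothesis survives into the strict inequality required for $\Delta$ to be strong.
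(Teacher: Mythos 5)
Your proof follows the same route as the paper's: both directions hinge on Proposition \ref{P:equivalence} together with the two translation constructions of Remark \ref{rm:parallel} (pushing a strong tightening disc along $I$--fibers to obtain a $0$--tightening disc; capping a $0$--tightening disc with the sub-annulus $A'$ and then iterating Lemma \ref{L:make_incidence_tool} through the layers to recover a strong one). You have simply made explicit the measure bookkeeping that the paper leaves implicit in Remark \ref{rm:parallel}, but the argument is the same.
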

\begin{proof}
If there exists a tightening disc use Proposition \ref{P:equivalence} and obtain a strong tightening disc. The
construction described on Remark \ref{rm:parallel} then yields a $0$--tightening disc. Given an $i$--tightening disc the construction of Remark \ref{rm:parallel} also yields a tightening disc.
\end{proof}

Supported by the corollary we shall henceforth deal only with parallel tightening discs. In fact, we will deal only
with $i$--tightening discs. Thus for the remainder of argument the reader can forget the
definitions of ``tightening disc'' and ``strong tightening disc''.

\begin{obs}\label{rm:f-preserves-tightness}
Every $(i+1)$--tightening disc is also an $i$--tightening disc. Therefore it is clear that if $\Delta$ is an $i$--tightening disc then so is $f(\Delta)$.
\end{obs}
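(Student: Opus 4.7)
The remark bundles two assertions, and the plan is to peel them off in order. For the first assertion I would begin by isolating which conditions in Definition~\ref{D:parallel_tightening_disc} actually mention $i$. A parallel tightening disc is an incidence disc $(\Delta,\partial\Delta)\subseteq(H_n,\partial H_n)$ that is disjoint from $\mathcal{E}_n$, is parallel to some $E\in\mathcal{E}_n$, and satisfies $\inter{\Delta}{\Omega}{\nu}<\inter{E}{\Omega}{\nu}$; the $i$--tightening refinement only adds transversality to $\mathcal{H}_i$. So the first claim reduces to showing that $(i+1)$--transversality implies $i$--transversality, which is exactly the admissibility statement recorded in Subsection~\ref{SS:intro:terminology}: if $\Delta\cap H_{i+1}$ is carried by $\mathcal{E}_{i+1}$, then $\Delta\cap H_i$ is carried by $\mathcal{E}_i$.

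For the second assertion I would propagate each defining property forward by $f$, exploiting the equivariance of the whole setup: $f(H_n)=H_{n+1}$, $f(\mathcal{E}_n)=\mathcal{E}_{n+1}$, $f(\mathcal{H}_i)=\mathcal{H}_{i+1}$, $f(\Lambda)=\Lambda$, and $f(\Omega,\nu)=(\Omega,\lambda^{-1}\nu)$. Starting from an $i$--tightening disc $\Delta\subseteq H_n$ parallel to $E\in\mathcal{E}_n$, its image $f(\Delta)\subseteq H_{n+1}$ is automatically disjoint from $\mathcal{E}_{n+1}$ and parallel to $f(E)\in\mathcal{E}_{n+1}$. It is an incidence disc in $H_{n+1}$ because $f(\Delta)\cap H_n=f(\Delta\cap H_{n-1})$ consists of essential discs (as $\Delta\cap H_{n-1}$ did and $f$ is a homeomorphism), and $f(\Delta)\cap\Lambda=f(\Delta\cap\Lambda)\subseteq f(\partial\Delta)=\partial f(\Delta)$. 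It is transverse to $\mathcal{H}_{i+1}$ because $\Delta$ is transverse to $\mathcal{H}_i$ and $f(\mathcal{H}_i)=\mathcal{H}_{i+1}$.

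The only point needing a brief computation is the measure inequality. Applying the identity $\inter{S}{\mathcal{H}_j}{\hat\nu^j}=\lambda^j\cdot\inter{f^{-j}(S)}{\mathcal{H}_0}{\hat\nu}$ from Subsection~\ref{SS:intro:terminology} to both $\Delta$ (at level $i$) and $f(\Delta)$ (at level $i+1$) yields $\inter{f(\Delta)}{\Omega}{\nu}=\lambda\cdot\inter{\Delta}{\Omega}{\nu}$, and the same factor $\lambda$ relates $\inter{f(E)}{\Omega}{\nu}$ to $\inter{E}{\Omega}{\nu}$; since $\lambda>1$, the strict inequality $\inter{\Delta}{\Omega}{\nu}<\inter{E}{\Omega}{\nu}$ is preserved under $f$. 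Thus $f(\Delta)$ is an $(i+1)$--tightening disc, and the first assertion then upgrades it to an $i$--tightening disc. There is no real obstacle here; the entire remark is bookkeeping that tracks how the $i$--label interacts with the index shift induced by $f$.
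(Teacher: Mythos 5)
Your proof is correct and follows essentially the same reasoning the paper leaves implicit in this remark: admissibility gives the downgrade from $(i+1)$--transversality to $i$--transversality, and $f$--equivariance of $H_n$, $\mathcal{E}_n$, $\mathcal{H}_i$, $\Lambda$ and the scaling $\nu\mapsto\lambda\nu$ under $f$ shows $f(\Delta)$ is an $(i+1)$--tightening disc, hence an $i$--tightening disc by the first part. The measure computation $\inter{f(\Delta)}{\Omega}{\nu}=\lambda\cdot\inter{\Delta}{\Omega}{\nu}$ and the verification that $f(\Delta)$ remains an incidence disc are exactly the bookkeeping the paper's ``it is clear'' elides.
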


\section{Push-away}\label{S:pushaway}

Recall from the introduction (Section \ref{SS:intro:strategy}) that given an $i$--tightening disc $\Delta$ it would
be desirable to obtain a framed isotopy $\phi$ such that $(\phi\circ f)^p(\Delta)\cap\Delta=\emptyset$. The following describes a step toward achieving this property.

\begin{definition}\label{D:pushaway}
Let $\Delta$ be a disc and $S$ a disjoint union of discs, both
embedded in a compact $3$--manifold $M$. Assume that
$\partial\Delta\cap\partial S=\emptyset$. Consider
$\gamma\subseteq\Delta\cap S$ a component which is innermost in
$\Delta$, bounding a disc $D\subseteq\Delta$. Since $S$ is
incompressible $\gamma$ also bounds a disc $D'\subseteq S$. Let
$S'$ be the surface obtained from $S$ by replacing $D$ with $D'$
(and pushing it a bit away from $\Delta$). We say that {\em $S'$
is obtained from $S$ by surgery along $\Delta$ and on $\gamma$}.
If $S''$ is obtained from $S$ by a sequence of surgeries along
$\Delta$ and $S''\cap\Delta=\emptyset$ then it is obtained by {\em
pushing $S$ away from $\Delta$}, see Figure \ref{F:push-away}. We call such a move a {\em
push-away}.
\end{definition}

{\def\scalesize{0.35}

\begin{figure}[ht]
\centering
\mbox{\subfigure{\psfrag{Delta}{\fontsize{\figurefontsize}{12}$\Delta$}
\psfrag{S}{\fontsize{\figurefontsize}{12}$S$}
\includegraphics[scale=\scalesize]{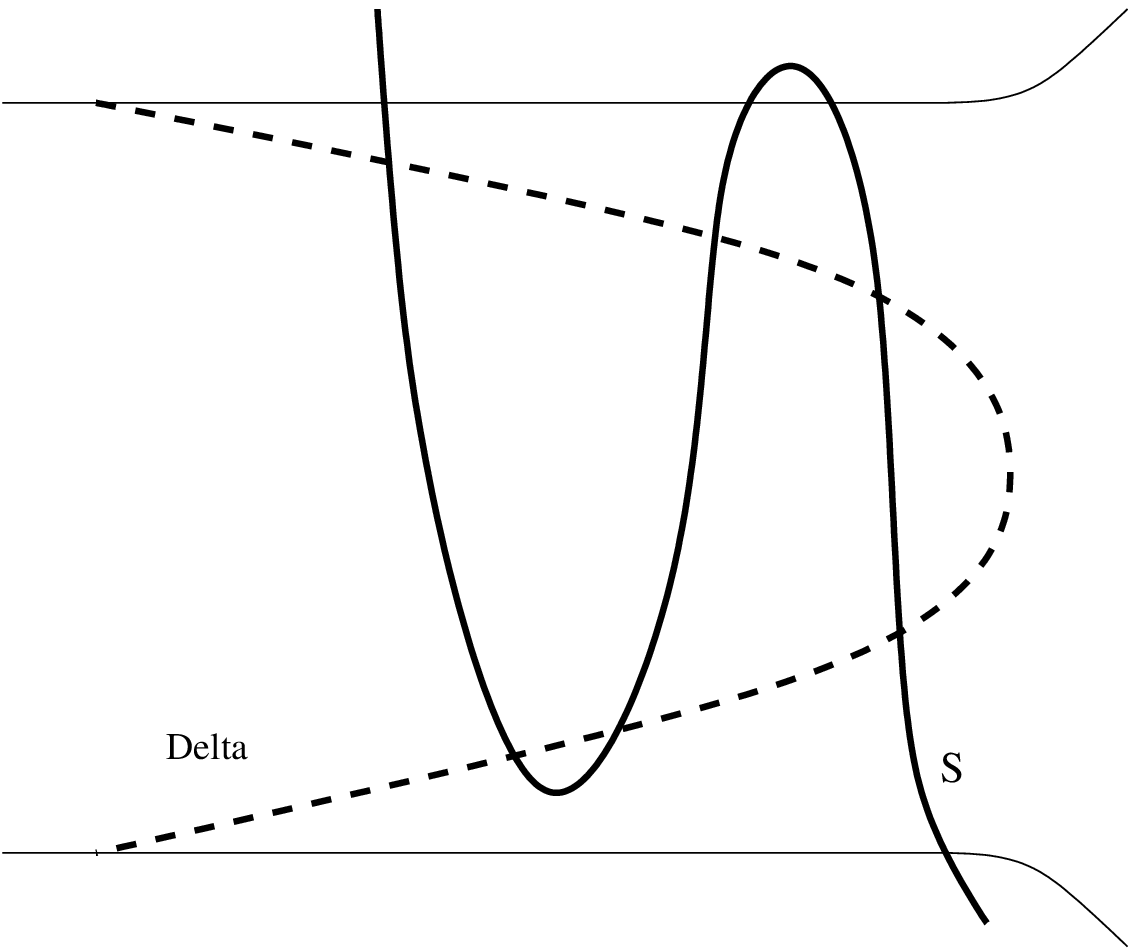}}
\quad \subfigure{\includegraphics[scale=\scalesize]{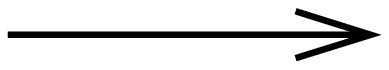}} \quad
\subfigure{\psfrag{Delta}{\fontsize{\figurefontsize}{12}$\Delta$}
\psfrag{S''}{\fontsize{\figurefontsize}{12}$S''$}
\includegraphics[scale=\scalesize]{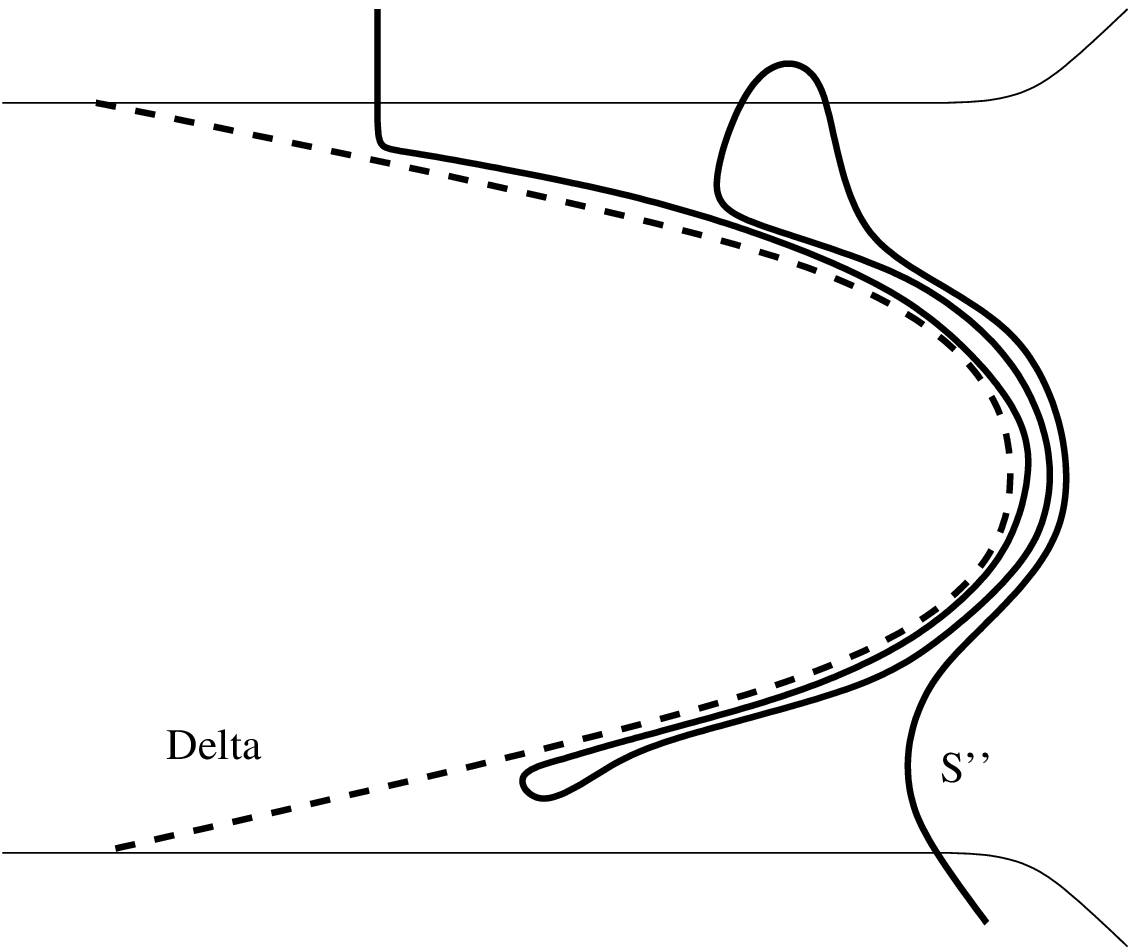}}}
\caption{\small A push-away from the disc $\Delta$, represented as the dashed line. The push-away of $S$ from $\Delta$ yields $S''$ disjoint from $\Delta$.}\label{F:push-away}
\end{figure}
}

\begin{obs}\label{R:push-away1}
In an irreducible manifold $M$ a push-away is realizable by an isotopy. In the present context of an irreducible
automorphism $f\colon H\to H$ if $\Delta\subseteq H_0$ is disjoint from $\mathcal{E}$ then a push-away is realizable by a framed
isotopy.
Also note that a push-away is a solely geometric operation, disregarding the measure $(\Omega,\nu)$ or any tightness
property.
\end{obs}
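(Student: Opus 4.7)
The plan is to reduce a push-away to a finite sequence of single-curve surgeries, each realized by an isotopy through a $3$--ball supplied by irreducibility.

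For one surgery, given an innermost intersection $\gamma\subseteq\Delta\cap S$ bounding $D\subseteq\Delta$, the fact that every component of $S$ is a disc gives a disc $D'\subseteq S$ with $\partial D'=\gamma$. The $2$--sphere $D\cup D'$ bounds a $3$--ball $B\subseteq M$ by irreducibility. The surgery, namely cutting $S$ along $\gamma$, discarding $D'$, and capping with a parallel copy of $D$ pushed just past $\Delta$, is the time--one map of an ambient isotopy supported in a regular neighborhood $N(B)$: one flows the component of $S$ containing $D'$ through $B$ across $D$ and a bit beyond, where ``beyond'' lies on the far side of $\Delta$ inside $N(B)\setminus B$.

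An induction on the number of components of $S\cap\Delta$ then yields the full push-away. The innermost surgery removes $\gamma$, and because the flow extends only into the sliver $N(B)\setminus B$ on the outer side of $\Delta$, it creates no new intersection with $\Delta$; hence $|S\cap\Delta|$ strictly decreases. Composing the finitely many isotopies so produced gives the desired ambient isotopy taking $S$ to $S''$ with $S''\cap\Delta=\emptyset$, proving the first claim.

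For the framed version one needs each $N(B)$ to lie in $H_0$ and to avoid $\mathcal{E}$. When $\Delta\subseteq H_0$, both $D$ and (after a preliminary adjustment of $S$) $D'$ can be assumed to lie in $H_0$, and since $H_0$ is a handlebody and therefore irreducible, $D\cup D'$ bounds a $3$--ball $B\subseteq H_0$. Disjointness from $\mathcal{E}$ is the main point requiring care: using that $\Delta\cap\mathcal{E}=\emptyset$, so $D\cap\mathcal{E}=\emptyset$, one arranges via a preliminary $\mathcal{E}$--preserving isotopy of $S$ along the product structure of the $1$--handles that $D'\cap\mathcal{E}=\emptyset$ as well. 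Then $D\cup D'$ sits inside $H_0\setminus\mathcal{E}$, which is a disjoint union of $3$--balls and hence irreducible; the bounding ball $B$ may therefore be chosen inside $H_0\setminus\mathcal{E}$, and $N(B)$ is taken thin enough to remain there. The resulting isotopy is then framed. The step that needs most attention is this last one, verifying inductively that disjointness from $\mathcal{E}$ can be maintained through each successive surgery; everything else reduces to standard innermost-disc bookkeeping.
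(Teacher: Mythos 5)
The paper does not supply a proof of this statement; it is filed as a Remark and its content is taken as standard, so your argument is being judged on its own merits rather than against an authorial proof.

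Your first paragraph --- realizing a single surgery by an ambient isotopy across the $3$--ball $B$ bounded by $D\cup D'$, then composing finitely many such isotopies, with $|S\cap\Delta|$ strictly decreasing at each step --- is the natural innermost--disc argument and is correct. One small point worth stating explicitly: because $\gamma$ is innermost in $\Delta$ one has $S\cap B=D'$ (no other sheet of $S$ can enter $B$, since $S$ could only cross $\partial B$ transversally through $D$ and $D\cap S=\gamma$), which is what makes the support-in-$N(B)$ claim clean.

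The framed version is where your write-up has a genuine gap, and you partly sense it. Two issues. First, your phrase ``after a preliminary adjustment of $S$, $D'$ can be assumed to lie in $H_0$'' is unnecessary and slightly misleading: in the situations in which the remark is invoked (Lemma \ref{L:inherited_push-away}, Proposition \ref{P:main_construction}) $S$ is an incidence disc, so $\gamma\subseteq\Delta\subseteq H_0$ already forces $D'$ to lie inside the component of $S\cap H_0$ containing $\gamma$, hence $D'\subseteq H_0$ automatically; no adjustment is involved. Second, and more seriously, the proposed ``preliminary $\mathcal{E}$--preserving isotopy of $S$ along the product structure of the $1$--handles'' to achieve $D'\cap\mathcal{E}=\emptyset$ does not obviously exist. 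If $D'$ genuinely meets some $E_i\in\mathcal{E}$, removing that intersection by an isotopy requires passing a piece of $S$ across $E_i$; but a framed isotopy fixes $\mathcal{E}$, so by definition it cannot carry $S$ through $\mathcal{E}$. Sliding along the $I$--fibers only works if $S\cap H_0$ is already carried by $\mathcal{E}$ (i.e.\ consists of dual discs at levels one can shift), which is not given by the hypotheses as you have stated them. In the paper's actual use, this difficulty is sidestepped because the discs being pushed away are, by the inductive construction, already disjoint from $\mathcal{E}$; that disjointness should be taken as part of the setting rather than manufactured by a local move, and your argument should be rephrased to invoke it as a hypothesis (then $D\cup D'$ lies in a single $0$--handle component of $H_0\setminus\mathcal{E}$ and your irreducibility argument goes through verbatim).
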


\begin{proposition}\label{P:well-defined}
Given $\Delta$, $S$, as in the Definition \ref{D:pushaway} the surface $S''$ obtained from $S$ by pushing it away from
$\Delta$
is uniquely determined (up to a small isotopy in a neighborhood of $\Delta$).
\end{proposition}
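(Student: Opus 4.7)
The plan is to prove uniqueness by induction on $n = |\Delta \cap S|$, using a confluence (diamond) argument: I show that any two choices of initial innermost curve lead to surfaces whose subsequent push-aways agree up to a small isotopy near $\Delta$, and a routine induction then disposes of all later choices. The base case $n=0$ is vacuous. For the inductive step, fix two distinct curves $\gamma_1,\gamma_2\subseteq\Delta\cap S$ that are both innermost in $\Delta$, with associated innermost discs $D_j\subseteq\Delta$ and bounded discs $D'_j\subseteq S$, $j=1,2$. The innermost hypothesis on $\Delta$ immediately forces $D_1\cap D_2=\emptyset$. On $S$, since $\gamma_1,\gamma_2$ are disjoint simple closed curves in the disjoint union of discs $S$, the bounded discs $D'_1,D'_2$ are either disjoint or nested.

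First I would dispose of the disjoint case $D'_1\cap D'_2=\emptyset$ (which includes the subcase where $\gamma_1,\gamma_2$ lie on distinct components of $S$). Here the two surgeries are supported in disjoint regions: letting $S_j$ denote the result of surgering $S$ on $\gamma_j$, the curve $\gamma_2$ still lies in $\Delta\cap S_1$, is still innermost in $\Delta$ with unchanged innermost disc $D_2$, and still bounds $D'_2$ on the new surface; the symmetric statement holds with the roles reversed. Surgering $S_1$ on $\gamma_2$ and surgering $S_2$ on $\gamma_1$ therefore yield the same intermediate surface $T$, and since $T$ has strictly fewer than $n$ intersections with $\Delta$, the inductive hypothesis gives equal push-aways.

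The main obstacle is the nested case, say $D'_1\subseteq D'_2$ on a common component of $S$. Here the two orders do not even perform the same number of elementary surgeries: surgery on $\gamma_2$ replaces $D'_2$ in a single move with a push-off $\tilde D_2$ of $D_2$, erasing $\gamma_1$ together with every intersection curve lying inside $D'_2$; whereas surgery on $\gamma_1$ only replaces $D'_1$ with a push-off $\tilde D_1$ of $D_1$, leaves $\gamma_2$ intact, and creates the modified disc $(D'_2\setminus\mathring{D'_1})\cup\tilde D_1$ bounded by $\gamma_2$ in $S_1$. The key observation to push through is that $\gamma_2$ remains innermost in $\Delta$ for $S_1$: the push-off $\tilde D_1$ can be taken inside a neighborhood of $D_1$ disjoint from $D_2$, so $D_2\cap S_1 = D_2\cap S = \gamma_2$. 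Hence the push-away of $S_1$ may legitimately continue with a surgery on $\gamma_2$, which replaces the modified disc with $\tilde D_2$; the net effect near $D_2$ is the single replacement $D'_2\to \tilde D_2$, agreeing with $S_2$ up to an isotopy supported in a neighborhood of $\Delta$. Applying the inductive hypothesis both to $S_1$ (whose push-away is now computed via $\gamma_2$ as the next surgery) and to $S_2$ closes the diamond and completes the induction.
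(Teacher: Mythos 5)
Your proof is correct and follows essentially the same route as the paper: induction on $|\Delta\cap S|$, a confluence (diamond) argument on the choice of first innermost curve, and the same two cases (the innermost discs in $S$ are either disjoint or nested), with the nested case resolved by observing that surgery on the inner curve leaves the outer curve innermost so that a second surgery recovers the other branch. Your treatment of the nested case is if anything a bit more carefully spelled out (e.g., placing the push-off of $D_1$ in a neighborhood disjoint from $D_2$), but it is not a different argument.
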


\begin{proof}
We have to show that $S''$ does not depend on the choice of sequence of surgeries on components of intersection of $S$ with $\Delta$.
We prove this by induction on $|\Delta\cap S|$.

If $|\Delta\cap S|=1$ there is just one surgery performed and the result holds.
Therefore now assume it holds for any pair of discs with fewer than $n$ intersection components.

Suppose then that $|\Delta\cap S|=n$. Let $S_0$, $S_1$ be two surfaces obtained from $S$ by a single surgery along
$\Delta$,
as described in Definition \ref{D:pushaway}. These surgeries are performed on curves $\gamma_0$,
$\gamma_1\subseteq\Delta\cap S$
which are innermost in $\Delta$, $\gamma_0$ yielding $S_0$ and $\gamma_1$ yielding $S_1$.
Since surgery along $\Delta$ does not introduce intersection components we only need to verify that by pushing $S_0$ and
$S_1$
away from $\Delta$ yields the same surface $S''$. There are two cases to consider:

\begin{itemize}
    \item [-] $\gamma_0$, $\gamma_1$ are ``disjoint'' in $S$, in the sense that the corresponding discs $D_0$,
$D_1\subseteq S$
they bound are disjoint. In this case $\gamma_0\subseteq D_0\subseteq S_1$, $\gamma_1\subseteq D_1\subseteq S_0$.
Now let $T_0$ be obtained by surgery along $\Delta$ on $\gamma_1$. Similarly, $T_1$ is obtained by surgery on
$\gamma_0$.
It is clear that $T_0=T_1$. The induction hypothesis applies to $T_0$, $T_1$,
therefore pushing them away from $\Delta$ yields a single $S''$. But the hypothesis also applies to $S_0$, $S_1$,
so pushing away also yields the same $S''$, proving the result does not depend on the choice between
$\gamma_0$, $\gamma_1$ for the first surgery.
    \item [-] $\gamma_0$, $\gamma_1$ are ``concentric'' in $S$, in the sense that the corresponding discs
$D_0$, $D_1\subseteq S$ they bound are not disjoint.
In this case one is (properly) contained in the other.
Assume without loss of generality that $D_1\subseteq D_0$. In this case $\gamma_1\subseteq S_0$ but $\gamma_0$,
$\gamma_1$
are disjoint from $S_1$. Let $T_0$ be obtained from $S_0$ by surgery on $\gamma_1$ along $\Delta$. Then $T_0=S_1$ and,
as in the previous case, they yield the same $S''$, completing the proof.
\end{itemize}
\end{proof}

\begin{obs}\label{R:push-away2}
It is clear that if $C$ is a component of $S$
then $C''$ is obtained by pushing $C$ away from $\Delta$ if  and
only if $C''$ is a component of $S''$.
\end{obs}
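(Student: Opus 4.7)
The plan is to reduce the statement to a single observation about one surgery, then use the well-definedness Proposition \ref{P:well-defined} to conclude.

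The key observation is that a single surgery along $\Delta$ modifies exactly one component of $S$, and the modified component is again a disc. Indeed, if $\gamma\subseteq\Delta\cap S$ is innermost in $\Delta$, bounding the innermost disc $D\subseteq\Delta$, then (since $S$ is a union of discs) $\gamma$ bounds a subdisc $D'\subseteq S$ which is entirely contained in one component $C_i$ of $S$. The surgery replaces $D'\subseteq C_i$ by a parallel copy of $D$ (pushed slightly off $\Delta$), leaving the other components $C_j$, $j\neq i$, pointwise unchanged; and the new component $(C_i-D')\cup D$ is the union of the annulus $C_i-D'$ with a disc glued along the boundary circle $\gamma$, hence still a disc.

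With this in hand, let $C_1,\ldots,C_k$ be the components of $S$. By Proposition \ref{P:well-defined} the push-away $S''$ is independent of the order in which we perform the surgeries, so we may arrange to execute all surgeries on curves lying in $C_1$ first, then all surgeries on $C_2$, and so on. During the block of surgeries for $C_i$, the preceding observation shows that only $C_i$ is modified, and the net effect is precisely a push-away of $C_i$, viewed by itself as the surface being pushed, from $\Delta$. Therefore each $C_i$ contributes exactly one component of $S''$, namely the push-away $C_i''$; conversely every component of $S''$ arises in this way, since the total number of components is preserved throughout. This establishes the stated equivalence.

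I expect no real obstacle here: the statement is essentially a bookkeeping remark, and the only substantive input is the already-proved reordering property from Proposition \ref{P:well-defined}. The mildest subtlety is verifying that the ``innermost in $\Delta$'' surgeries used to push $C_i$ by itself can be realized as surgeries in the combined sequence on $S$ — but this is immediate, since a curve innermost in $\Delta$ among $\Delta\cap C_i$ is also innermost in $\Delta$ among $\Delta\cap S$ after the other components' curves have been removed (or simply when we restrict attention to the $C_i$ block of the reordered sequence).
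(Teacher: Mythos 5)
The first half of your argument is exactly right and is the crux of the matter: a single surgery along $\Delta$ modifies exactly one component of $S$ (the one containing the subdisc of $S$ bounded by the innermost curve $\gamma$), and the result of that modification is again a disc, so components stay in bijection throughout. The gap is in the reordering step. Proposition~\ref{P:well-defined} says that all \emph{valid} sequences of surgeries produce the same $S''$; it does not license an arbitrary rearrangement into a valid sequence. In particular, the block ordering ``all $C_1$ surgeries first, then all $C_2$ surgeries, \dots'' need not be executable: at each step one must surger a curve that is innermost in $\Delta$ among \emph{all} of $\Delta\cap S$ at that moment, and while you are in the $C_1$ block the curves of $C_2,\dots,C_k$ have not yet been removed, so one of them may sit inside the disc of $\Delta$ bounded by the $C_1$-curve you wish to surger. (Concretely, take $\gamma_1,\gamma_3\subseteq C_1$ and $\gamma_2\subseteq C_2$ with the discs in $\Delta$ nested as $D_1\subset D_2\subset D_3$: every valid sequence must surger $\gamma_2$ between $\gamma_1$ and $\gamma_3$.) Your closing sentence has the implication backwards: being innermost among $\Delta\cap C_i$ does not imply being innermost among the larger set $\Delta\cap S$.

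The fix is short and avoids reordering altogether. Take \emph{any} valid sequence of surgeries carrying $S$ to $S''$ and, using your first observation, extract the subsequence of those surgeries that modify $C_i$. At each such step the surgered curve $\gamma$ is innermost in $\Delta$ with respect to $\Delta\cap S$ (the current whole surface); since $\Delta\cap C_i\subseteq\Delta\cap S$, $\gamma$ is a fortiori innermost with respect to $\Delta\cap C_i$, so the extracted subsequence is a legitimate sequence of surgeries on $C_i$ alone. Its end result $C_i''$ is disjoint from $\Delta$ because $S''$ is, hence by Proposition~\ref{P:well-defined} it is \emph{the} push-away of $C_i$; and by the component-tracking observation $C_i''$ is a component of $S''$. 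Conversely the number of components is constant, so every component of $S''$ arises this way. With that replacement for the reordering paragraph your proof is complete.
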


The following is more technical.

\begin{lemma}\label{L:inherited_push-away}
Suppose $(\Delta,\partial\Delta)\subseteq(H_0,\partial H_0)$ and $(S,\partial S)\subseteq (H_1,\partial H_1)$ are
$i$--transverse  incidence discs for some $i\in\mathbb{Z}$. Also assume $\partial\Delta\cap S=\emptyset$. Let
$D\subseteq S\cap H_0$ be a component. If $D''$, $S''$ are obtained from $D$, $S$ by push-away from $\Delta$ then
$D''\subseteq S''$ and both are $i$--transverse. In particular $S''\cap H_0$ is obtained by pushing $S\cap H_0$ away
from $\Delta$.
\end{lemma}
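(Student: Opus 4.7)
The plan is to show that the push-away process on $S$ decomposes as independent push-aways on the components of $S\cap H_0$, so that the stand-alone push-away $D''$ appears as a component of $S''$.

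First, I would analyze the geometry of $\Delta\cap S$. Since $\Delta\subseteq H_0\subseteq\textrm{Int}(H_1)$ we have $\partial S\cap \Delta\subseteq \partial H_1\cap H_0=\emptyset$; combined with the hypothesis $\partial\Delta\cap S=\emptyset$, this forces $\Delta\cap S$ to consist of simple closed curves in $\textrm{Int}(\Delta)\cap \textrm{Int}(S)$. Since $\Delta\subseteq H_0$, every such curve lies in $S\cap H_0$. Because $S$ is an incidence disc of $H_1$, Remark \ref{Rmrk:incidence_disc} says that $S\cap(H_1-\mathring{H}_0)$ is a planar surface whose only boundary component in $\partial H_1$ is $\partial S$, the remaining boundary components being simple closed curves on $\partial H_0$ that bound the essential discs of $S\cap H_0$. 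Thus $S\cap H_0=D_1\sqcup\cdots\sqcup D_k$ with each $D_\ell$ a disc whose boundary $\partial D_\ell\subseteq\partial H_0$ separates it in $S$ from $\partial S$, and each component of $\Delta\cap S$ lies in exactly one $D_\ell$.

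Next, I would examine a single surgery. Let $\gamma\subseteq\Delta\cap S$ be innermost in $\Delta$, bounding $D_\Delta\subseteq\Delta\subseteq H_0$, and suppose $\gamma\subseteq D_\ell$. The sub-disc of $S$ cut off by $\gamma$ on the side not containing $\partial S$ is contained in $D_\ell$, because $\partial D_\ell$ separates $D_\ell$ from $\partial S$. The surgery replaces this sub-disc with a small push-off of $D_\Delta$, so it modifies only $D_\ell$ and leaves the other $D_j$'s and the planar piece $S\cap(H_1-\mathring{H}_0)$ fixed. Iterating and invoking Proposition \ref{P:well-defined}, we may schedule all surgeries on curves contained in $D$ first; the restriction to $D$ of the result is exactly the stand-alone push-away $D''$, and subsequent surgeries (supported on the other $D_j$'s) leave $D''$ untouched. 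This gives $D''\subseteq S''$ and $S''\cap H_0=D_1''\sqcup\cdots\sqcup D_k''$, which is the final claim of the lemma.

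For $i$-transversality, each surgery step replaces a sub-disc of $S$ (automatically $i$-transverse as a sub-surface of the $i$-transverse disc $S$) by a parallel push-off of $D_\Delta\subseteq\Delta$, which is $i$-transverse for the same reason. Choosing the push-off inside a bicollar of $\Delta$ small enough to sit inside the $1$-handles of $\mathcal{H}_i$ that meet $\Delta\cap H_i$ and to respect their $I$-fiber product structure, the push-off stays disjoint from the $0$-handles of $\mathcal{H}_i$ and transverse to the $I$-fibers. Thus the modified surface remains $i$-transverse at each step, so $S''$ (and its component $D''$) is $i$-transverse. The main obstacle is bookkeeping in the first two paragraphs: one must identify carefully which disc bounded by $\gamma$ in $S$ is the one being replaced—the one ``inside'' $\gamma$ not containing $\partial S$—and verify that it sits entirely inside one $D_\ell$. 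Once this structural fact, which uses the incidence-disc hypothesis on $S$ essentially, is in hand, the rest is a straightforward iteration combined with the well-definedness of push-away.
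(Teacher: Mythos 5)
Your structural analysis---every component $\gamma$ of $\Delta\cap S$ is a circle lying inside a single component $D_\ell$ of $S\cap H_0$, and the disc of $S$ that $\gamma$ bounds on the side away from $\partial S$ also sits inside that $D_\ell$---is exactly the heart of the paper's proof, and your use of Remark \ref{Rmrk:incidence_disc} to see that $\partial D_\ell$ separates $D_\ell$ from $\partial S$ in $S$ is correct. However, the step where you ``invoke Proposition \ref{P:well-defined}'' to \emph{schedule all surgeries on curves contained in $D$ first} does not go through as stated. Definition \ref{D:pushaway} only permits surgery on a curve that is innermost in $\Delta$ with respect to the \emph{entire} intersection $\Delta\cap S$. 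A curve $\gamma_0\subseteq\Delta\cap D$ that is innermost in $\Delta$ relative to $\Delta\cap D$ may fail to be innermost relative to $\Delta\cap S$: the innermost disc in $\Delta$ it bounds can contain curves of $\Delta\cap D_j$ with $j\neq\ell$. Proposition \ref{P:well-defined} asserts that the end result is independent of the admissible choices; it does not say every ordering is admissible. So the claimed schedule may simply not exist.

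The repair is cheap and is what the paper's proof implicitly does. Whichever admissible (innermost-in-$\Delta$) curve is chosen at a given step, the surgery modifies only the one component $D_j$ containing it, together with a push-off lying in a thin bicollar of $\Delta$ disjoint from the other components. Moreover, any such surgery performed on a curve $\gamma\subseteq D_\ell$ is also innermost with respect to the subset $\Delta\cap D_\ell\subseteq\Delta\cap S$, so the subsequence of surgeries touching $D_\ell$ is itself a legitimate push-away sequence for $D_\ell$ alone, terminating disjoint from $\Delta$; Proposition \ref{P:well-defined} applied to $D_\ell$ then identifies its end state with $D''$. On $i$-transversality, the paper's route is also a bit more direct than yours: since $\Delta$ and $D_\ell$ are both $i$-transverse, in general position $\Delta\cap D_\ell$ misses $H_i$, so $\gamma\cap H_i=\emptyset$; consequently both discs bounded by $\gamma$ (in $\Delta$ and in $S$) are $i$-transverse and the gluing happens entirely outside $H_i$. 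Your bicollar argument controls the push-off but does not explicitly address the gluing along $\gamma$ inside $H_i$, which the observation $\gamma\cap H_i=\emptyset$ renders moot.
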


\begin{proof}
Let $\gamma\subseteq \Delta\cap S$. From $\Delta\subseteq H_0$ it follows that $\gamma\subseteq H_0$.
Therefore $\gamma$ is contained in a component of $S\cap H_0$, which is an essential disc $D\subseteq S\cap H_0$ because
$S$ is an incidence disc. This means that the disc $F'\subseteq S$ bounded by $\gamma$ is also contained in $D$. Hence if
$D'$, $S'$ are obtained from $D$, $S$
by surgery on $\gamma$ along $\Delta$ then $D'\subseteq S'\cap H_0$, proving the result for $D''$, $S''$ pushed away
from $\Delta$. Since both $D$ and $\Delta$ are $i$--transverse $\Delta\cap D$ does not intersect $H_i$, hence
$\gamma\cap H_i=\emptyset$. Therefore both discs $F$, $F'$ bounded by $\gamma$ are also $i$--transverse and the surgery yields an
$i$--transverse disc.
\end{proof}

\section{The proof}

\subsection{Main construction}\label{S:proof:main}

Assume that the lamination is not tight. Now consider
$i$--tightening discs $\Delta$ with the properties that 1)
$(\Delta,\partial \Delta)\subseteq (H_0,\partial H_0)$, and 2)
$\Delta$ has minimal ``height'' $h_0$. The {\em height} of a disc $(\Delta,\partial\Delta)\subseteq(H_0,\partial H_0)$ is the greatest $h_0=-i_0$ such that $\Delta$ is transverse to $\mathcal{H}_{i_0}$. Put another way, $\Delta\cap H_{i_0}$
consists of discs transverse to $\mathcal{H}_{i_0}$, but for any $i>i_0$
the intersection $\Delta\cap H_i$ contains a component which is not transverse of $\mathcal{H}_i$.

Now among $i$-tightening discs of minimal height $h_0=-i_0$ described above, choose one, also labeled $\Delta$, having minimal intersection
$\inter{\Delta}{\mathcal{H}_{i_0}}{\hat\nu^{i_0}}$. Note that such a disc exists since $\inter{\Delta}{\mathcal{H}_{i_0}}{\hat\nu^{i_0}}$ is an integer and non-negative combination of entries of the vector $\hat\nu^{i_0}=\lambda^{i_0}\hat\nu>0$ (see equation (\ref{eq:intersection-n}) in Subsection \ref{SS:intro:terminology} above). We fix this $\Delta$ throughout the remainder of the paper.

Recall the strategy outlined in the introduction, Subsection \ref{SS:intro:strategy}. The goal is to find a suitable
enlargement $\mathcal{E}'$ of the disc system $\mathcal{E}\subseteq\mathcal{E}'\subseteq H_0$ including $\Delta\in\mathcal{E}'$ and an isotopy $\phi$
so that $\mathcal{E}'$ is admissible with respect to $\phi\circ f$. Such an $\mathcal{E}'$ (and $\phi$) is obtained by
considering the union of the $f^p(\Delta)\cap H_0$ with increasing $p\geq 0$. In general these will be neither discs nor
pairwise disjoint. By using mainly push-aways and standard incidence moves we obtain an ambient isotopy $h$ such that the
union of these $(h\circ f)^p(\Delta)\cap H_0$ indeed consists of an nested increasing sequence of disc systems, all
transverse to $\mathcal{H}_{i_0}$ (Proposition \ref{P:main_construction}). This process eventually ``stabilizes'', in a sense which will be made precise (see Lemma \ref{L:stable}) and can be made finite, yielding the disc system $\mathcal{E}'$ and ambient isotopy $\phi$, see
Proposition \ref{P:stabilized} (in the statement the role of $\mathcal{E}'$ is played by $\mathcal{E}\cup\mathcal{D}$).

It is important to ensure that in the process $f$ does not become less efficient. In other words, the introduction of the the isotopy $\phi$ should not
yield $\phi\circ f$ less efficient than $f$. As mentioned before $\phi$ is essentially a composition of standard incidence
moves and push-aways. While studying standard incidence moves we proved many results assuring they do not increase
intersection of certain surfaces with $(\mathcal{H}_{i_0},\hat\nu^{i_0})$. We need something similar for push-aways. Below
$\Delta$ is the $i_0$--tightening disc fixed at the beginning of the section. Its minimality properties are important.

\begin{lemma}\label{L:push-away_reduce}
Suppose that $D$ is an embedded disc transverse to $\mathcal{H}_{i_0}$ with $D\cap\partial\Delta=\emptyset$. Let $D'$
be obtained by pushing $D$ away from $\Delta$. Then $D'$ is transverse to $\mathcal{H}_{i_0}$ and,
\[
\inter{D'}{\mathcal{H}_{i_0}}{\hat\nu^{i_0}}\leq \inter{D}{\mathcal{H}_{i_0}}{\hat\nu^{i_0}}.
\]
\end{lemma}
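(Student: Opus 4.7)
My plan is to argue by contradiction, exploiting the double minimality of the fixed disc $\Delta$ (smallest height $h_0$ among $i$-tightening discs in $H_0$, and among those the smallest $\inter{\Delta}{\mathcal{H}_{i_0}}{\hat\nu^{i_0}}$). A push-away is a finite composition of surgery moves along innermost intersection curves $\gamma\subseteq\Delta\cap D$: each such move replaces the sub-disc $D_\gamma\subseteq D$ bounded by $\gamma$ with the innermost-in-$\Delta$ sub-disc $\Delta_\gamma\subseteq\Delta$ (also bounded by $\gamma$), pushed slightly off $\Delta$. Since $\Delta_\gamma$ and $D_\gamma$ inherit transversality to $\mathcal{H}_{i_0}$ from $\Delta$ and $D$, a small smoothing near $\gamma$ (together with a generic perturbation so that the new disc crosses $\partial H_{i_0}$ transversely) keeps the surgered disc $\mathcal{H}_{i_0}$-transverse. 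This establishes the first claim of the lemma, and by induction on the number of innermost components of $\Delta\cap D$ reduces the inequality to the single-surgery statement
\[
\inter{\Delta_\gamma}{\mathcal{H}_{i_0}}{\hat\nu^{i_0}}\leq\inter{D_\gamma}{\mathcal{H}_{i_0}}{\hat\nu^{i_0}}.
\]

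Suppose this fails. I form the ``swap'' disc $\tilde\Delta=(\Delta\setminus\Delta_\gamma)\cup D_\gamma$ (appropriately smoothed along $\gamma$), which is an embedded $\mathcal{H}_{i_0}$-transverse disc with $\partial\tilde\Delta=\partial\Delta$ and $\inter{\tilde\Delta}{\mathcal{H}_{i_0}}{\hat\nu^{i_0}}<\inter{\Delta}{\mathcal{H}_{i_0}}{\hat\nu^{i_0}}$. My goal is to promote $\tilde\Delta$ to a genuine $i_0$-tightening disc $\tilde\Delta'$ with $\inter{\tilde\Delta'}{\mathcal{H}_{i_0}}{\hat\nu^{i_0}}\leq\inter{\tilde\Delta}{\mathcal{H}_{i_0}}{\hat\nu^{i_0}}$, which will contradict the choice of $\Delta$. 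The required adjustments, each carried out without raising $\inter{\cdot}{\mathcal{H}_{i_0}}{\hat\nu^{i_0}}$ and preserving $\mathcal{H}_{i_0}$-transversality, are: (a) if $D_\gamma$ strays outside $H_0$, isotope it back through the product $\overline{H\setminus H_0}\simeq\partial H\times I$, an isotopy living in $H\setminus H_{i_0}$ and thus invisible to the weighted intersection; (b) remove $D_\gamma\cap\mathcal{E}_0$, which (since $\gamma$ is disjoint from $\mathcal{E}_0$) consists of closed curves in $\mathring D_\gamma$, by innermost-disc surgeries in $D_\gamma$ against $\mathcal{E}_0$, using the ball-localization technique of case (1) in the proof of Lemma \ref{L:make_incidence}; (c) apply Lemma \ref{L:make_incidence_tool} to convert the result into an incidence disc.

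The outcome $\tilde\Delta'$ is an incidence disc in $H_0$, disjoint from $\mathcal{E}_0$, $\mathcal{H}_{i_0}$-transverse, and sharing its boundary with $\Delta$. Since $H_0\setminus\mathcal{E}_0$ is a disjoint union of balls, $\tilde\Delta'$ is parallel to the same $E\in\mathcal{E}_0$ to which $\Delta$ is parallel, and
\[
\nu(\tilde\Delta')\leq\inter{\tilde\Delta}{\mathcal{H}_{i_0}}{\hat\nu^{i_0}}<\nu(\Delta)<\nu(E)
\]
exhibits $\tilde\Delta'$ as a parallel tightening disc. Its height is at most $h_0$ (being $\mathcal{H}_{i_0}$-transverse) and at least $h_0$ (by minimality of $h_0$), hence equals $h_0$, so $\tilde\Delta'$ contradicts the minimality of $\inter{\Delta}{\mathcal{H}_{i_0}}{\hat\nu^{i_0}}$ among height-$h_0$ tightening discs. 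The main obstacle will be step (b): verifying that one can really clean up $D_\gamma\cap\mathcal{E}_0$ via framed-isotopy ball surgeries that stay out of $H_{i_0}$, by faithfully imitating the analogous construction inside Lemma \ref{L:make_incidence}.
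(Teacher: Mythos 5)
Your overall strategy is exactly the paper's: reduce to a single surgery move, construct the ``swap'' disc, clean it up into a genuine $i_0$--tightening disc in $H_0$, and contradict the minimality of $\inter{\Delta}{\mathcal{H}_{i_0}}{\hat\nu^{i_0}}$. However, there is a genuine gap at the central step: you assert that $\tilde\Delta=(\Delta\setminus\Delta_\gamma)\cup D_\gamma$, ``appropriately smoothed along $\gamma$,'' is an \emph{embedded} disc. That is false in general. Here $\Delta_\gamma$ is innermost in $\Delta$, but $D_\gamma$ need not be innermost in $D$; if $D_\gamma$ contains other components $\gamma'$ of $\Delta\cap D$, then those $\gamma'$ lie in $\Delta\setminus\mathring\Delta_\gamma$ (because $\Delta_\gamma$ is innermost), so $D_\gamma\cap(\Delta\setminus\Delta_\gamma)\supsetneq\gamma$ and $\tilde\Delta$ is only an immersed disc with double curves. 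Smoothing along $\gamma$ does nothing for these double curves, which are located elsewhere. The paper flags this explicitly (``It may not be embedded, in case $\gamma$ is not innermost in $D$'') and removes the double curves by a finite sequence of double--curve surgeries, in the style of the loop theorem; these surgeries happen in neighborhoods of curves of $\Delta\cap D$, which are disjoint from $H_{i_0}$ by transversality of both $\Delta$ and $D$, so they leave $\inter{\cdot}{\mathcal{H}_{i_0}}{\hat\nu^{i_0}}$ unchanged. That argument is the essential missing ingredient in your write-up.

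You identify step~(b) --- removing $D_\gamma\cap\mathcal{E}_0$ --- as ``the main obstacle,'' but it is a secondary issue; the paper simply invokes Lemma~\ref{L:make_incidence} after the double--curve surgeries. The real obstacle is the embeddedness question above. Your steps (a), (b), (c) are reasonable bookkeeping for producing a bona fide $i_0$--tightening disc, but without the double--curve surgery step you never have an embedded disc to feed into them, so the contradiction does not get off the ground.
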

\begin{proof}
It suffices to show that a surgery along $\Delta$ on a component $\gamma\subseteq D\cap\Delta$ does not increase
intersection.
Let $F\subseteq D$, $F'\subseteq\Delta$ be the discs bounded by $\gamma$ in each disc (where $F'$ is innermost).
If $\inter{F'}{\mathcal{H}_{i_0}}{\hat\nu^{i_0}}\leq\inter{F}{\mathcal{H}_{i_0}}{\hat\nu^{i_0}}$ then the surgery --- replacing $F$
with $F'$ on $D$ --- does not increase intersection. Therefore assume otherwise that
$\inter{F'}{\mathcal{H}_{i_0}}{\hat\nu^{i_0}}>\inter{F}{\mathcal{H}_{i_0}}{\hat\nu^{i_0}}$. We shall obtain another
$i_0$--tightening disc
$\Delta'\subseteq H_0$, hence with height $h_0=-i_0$, and with $\inter{\Delta'}{\mathcal{H}_{i_0}}{\hat\nu^{i_0}}<\inter{\Delta}{\mathcal{H}_{i_0}}{\hat\nu^{i_0}}$, contradicting
the minimal intersection choice of $\Delta$.

Let $S$ be obtained by removing $F'$ from $\Delta$ and gluing in $F$. Then
$\inter{S}{\mathcal{H}_{i_0}}{\hat\nu^{i_0}}<\inter{\Delta}{\mathcal{H}_{i_0}}{\hat\nu^{i_0}}$. It may not be embedded, in case
$\gamma$ is not innermost in $D$. But in this case the singular set consists only of double curves (which, in this case, are simple closed curves). Then
perform finitely many double curve surgeries (as in the proof of the loop theorem), each changing $S$ in a neighborhood
of $\gamma$, possibly discarding a component without boundary. Recall that both $D$ and $\Delta$ are transverse to
$\mathcal{H}_{i_0}$, therefore so are $F$, $F'$, and hence $\gamma\cap H_{i_0}=\emptyset$. This shows that changing $S$
in neighborhood of $\gamma$ does not affect intersection with $(\mathcal{H}_{i_0},\hat\nu)$ (though discarding closed
surface components may reduce intersection). After finitely many such surgeries we obtain an embedded disc $S'$ with
$\inter{S'}{\mathcal{H}_{i_0}}{\hat\nu^{i_0}}<\inter{\Delta}{\mathcal{H}_{i_0}}{\hat\nu^{i_0}}$ and $\partial S'=\partial\Delta$. By
Lemma \ref{L:make_incidence} a sequence of standard incidence moves yield an $i_0$--transverse incidence disc $\Delta'$
with $\partial\Delta'=\partial\Delta$ and:
\[
\inter{\Delta'}{\mathcal{H}_{i_0}}{\hat\nu^{i_0}}<\inter{\Delta}{\mathcal{H}_{i_0}}{\hat\nu^{i_0}}.
\]
Therefore $\Delta'\subseteq H_0$ is also an $i_0$--tightening disc, hence with height $h_0=-i_0$, contradicting the minimality hypothesis on $\inter{\Delta}{\mathcal{H}_{i_0}}{\hat\nu^{i_0}}$.
\end{proof}

For considering enlargements of disc systems we introduce the following notion. We say that two disc systems $\mathcal{D}$, $\mathcal{D}'\subseteq H_n$ are {\em compatible} if each disc $D\in\mathcal{D}$ is either a disc of $\mathcal{D}'$ or disjoint from $\mathcal{D}'$ (note that the definition is symmetric). It is easy to see that $\mathcal{D}\cup\mathcal{D}'$ is a disc system if, and only if, the systems are compatible.

The next proposition describes the main construction of the section, taming the enlargement of the system resulting from adding in the tightening disc $\Delta$ and its iterates under powers of the automorphism.

\begin{proposition}\label{P:main_construction}
There exists a nested sequence of systems of $i_0$--transverse incidence discs
\[
\{\Delta\}=\mathcal{D}_0\subseteq\mathcal{D}_1\subseteq\cdots\subseteq H_0,
\]
and framed-isotopies $h_i\colon H\to H$, $i\geq 0$, in $H_0$  such that:
\begin{itemize}
    \item $\mathcal{D}_{i+1}=\mathcal{D}_0\cup (h_{i}\circ f(\mathcal{D}_{i})\cap H_0)$;
    \item $\mathcal{D}_i$ is disjoint from $\mathcal{E}$;
    \item if $D\in\mathcal{D}_i$ then $\inter{h_i\circ f(D)}{\mathcal{H}_{i_0}}{\hat\nu^{i_0}}\leq
\lambda\cdot\left(\inter{D}{\mathcal{H}_{i_0}}{\hat\nu^{i_0}}\right) $.
\end{itemize}
\end{proposition}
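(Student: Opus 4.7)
The argument goes by induction on $i \geq 0$. For the base case, set $\mathcal{D}_0 = \{\Delta\}$: by the choice of $\Delta$ as an $i_0$-tightening disc made at the start of this section, it is an $i_0$-transverse incidence disc disjoint from $\mathcal{E}$, and the intersection inequality is vacuous until $h_0$ is defined. Inductively, suppose $\mathcal{D}_0 \subseteq \cdots \subseteq \mathcal{D}_i$ and framed isotopies $h_0, \ldots, h_{i-1}$ satisfying the three listed properties have been produced, with the convention $h_{-1} = \Id$ and $\mathcal{D}_{-1} = \emptyset$. My plan is to write $h_i = g_i \circ h_{i-1}$ for a suitable framed isotopy $g_i$ pointwise fixing $\mathcal{D}_i$, and set $\mathcal{D}_{i+1} = \mathcal{D}_0 \cup (h_i \circ f(\mathcal{D}_i) \cap H_0)$. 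The nestedness $\mathcal{D}_i \subseteq \mathcal{D}_{i+1}$ will follow automatically: because $g_i$ fixes $\mathcal{D}_i \setminus \mathcal{D}_0 = h_{i-1}(f(\mathcal{D}_{i-1}) \cap H_0)$, the map $h_i$ agrees with $h_{i-1}$ on $f(\mathcal{D}_{i-1}) \cap H_0 \subseteq f(\mathcal{D}_i) \cap H_0$, forcing $\mathcal{D}_i \setminus \mathcal{D}_0 \subseteq h_i(f(\mathcal{D}_i) \cap H_0)$.

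By $f$-equivariance and admissibility, $f(\mathcal{D}_i) \subseteq H_1$ is a system of $(i_0+1)$-transverse (and hence $i_0$-transverse) incidence discs in $H_1$, so $f(\mathcal{D}_i) \cap H_0$ is an $i_0$-transverse system of essential discs in $H_0$. Decompose $f(\mathcal{D}_i) \cap H_0$ into an ``old piece'' $f(\mathcal{D}_{i-1}) \cap H_0$ (whose $h_{i-1}$-image is $\mathcal{D}_i \setminus \mathcal{D}_0$, to be preserved by $g_i$) and a ``new piece'' $f(\mathcal{D}_i \setminus \mathcal{D}_{i-1}) \cap H_0$ (automatically disjoint from the old piece, since $\mathcal{D}_i$ is itself a disc system). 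The construction of $g_i$ proceeds in two stages. First, push the $H_0$-part of $h_{i-1}$ applied to the new piece away from $\Delta$: Lemma \ref{L:inherited_push-away} guarantees that the push-away inside $H_1$ descends correctly to $H_0$, and Lemma \ref{L:push-away_reduce} (which crucially uses the minimality of $\Delta$) bounds the resulting weighted intersection. Second, apply Lemma \ref{L:make_incidence} with $\mathcal{D} = \mathcal{D}_i$ and $\mathcal{S}$ the push-away output to convert the new piece into $i_0$-transverse incompressible incidence discs disjoint from $\mathcal{D}_i$ and $\mathcal{E}$, once more without increasing intersection. Combining with the eigenvector identity $\inter{f(D)}{\mathcal{H}_{i_0}}{\hat\nu^{i_0}} = \lambda \cdot \inter{D}{\mathcal{H}_{i_0}}{\hat\nu^{i_0}}$ (a direct consequence of the invariance statement in Subsection \ref{SS:intro:terminology}) yields the third bullet for every $D \in \mathcal{D}_i$; for $D \in \mathcal{D}_{i-1}$ this was already ensured by the inductive hypothesis on $h_{i-1}$, and $g_i$ preserves it because it fixes $\mathcal{D}_i$.

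The principal obstacle I foresee is ensuring that the new pieces become incidence discs not only in $H_0$ but at every deeper level $H_j$ with $i_0 \leq j \leq 0$, which requires a layer-by-layer application of standard incidence moves in the spirit of Remark \ref{rm:make_incidence_improvement}. This must be orchestrated so that (i) the fixed set $\mathcal{D}_i$ is respected throughout, (ii) the push-away from $\Delta$ does not destroy incidence structure that the later moves would need to restore at deeper levels, and (iii) the composite $g_i$ remains a framed isotopy (supported in $H_0$ and fixing $\mathcal{E}$). The last point is feasible because $\Delta$ and the relevant images of the new piece can be arranged in neighborhoods disjoint from $\mathcal{E}$, using that $\mathcal{D}_i$ itself is disjoint from $\mathcal{E}$ by the inductive hypothesis.
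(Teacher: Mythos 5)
Your outline follows the paper's strategy closely: an induction producing $h_i$ as a composition $g_i\circ h_{i-1}$ (the paper's $\psi_j\circ h_j$), with $g_i$ built from a push-away from $\Delta$ followed by standard incidence moves, the decomposition of $f(\mathcal{D}_i)\cap H_0$ into an already-handled ``old'' part and a ``new'' part, and the appeal to Lemmas \ref{L:inherited_push-away}, \ref{L:push-away_reduce}, \ref{L:make_incidence} to obtain incidence discs without increasing weighted intersection. The eigenvector identity $\inter{f(D)}{\mathcal{H}_{i_0}}{\hat\nu^{i_0}}=\lambda\cdot\inter{D}{\mathcal{H}_{i_0}}{\hat\nu^{i_0}}$ is also the right way to pass from ``does not increase'' to the $\lambda\cdot(\cdot)$ bound.

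However, there is a genuine gap in how you close the induction on the third bullet. You carry as inductive hypothesis exactly the three bullets in the statement, in particular ``if $D\in\mathcal{D}_{i-1}$ then $\inter{h_{i-1}\circ f(D)}{\mathcal{H}_{i_0}}{\hat\nu^{i_0}}\le\lambda\cdot\inter{D}{\mathcal{H}_{i_0}}{\hat\nu^{i_0}}$.'' For $D\in\mathcal{D}_i\setminus\mathcal{D}_{i-1}$ you want to deduce
\[
\inter{h_i\circ f(D)}{\mathcal{H}_{i_0}}{\hat\nu^{i_0}}
\le \inter{h_{i-1}\circ f(D)}{\mathcal{H}_{i_0}}{\hat\nu^{i_0}}
\le \inter{f(D)}{\mathcal{H}_{i_0}}{\hat\nu^{i_0}}
=\lambda\cdot\inter{D}{\mathcal{H}_{i_0}}{\hat\nu^{i_0}},
\]
where the first inequality is $g_i$ not increasing intersection (fine) but the second inequality is exactly the assertion that the \emph{already-constructed} isotopy $h_{i-1}$ does not increase intersection when applied to $f(D)\cap H_0$ for a disc $D$ that was \emph{not present} when $h_{i-1}$ was built. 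Your inductive hypothesis says nothing about such $D$, and this is not automatic: a push-away performed at an earlier stage is an ambient isotopy supported near $\Delta$, and it can a priori increase the weighted intersection of an arbitrary surface passing through that region. The paper resolves this by strengthening the inductive hypothesis to its property (3): the bound holds for \emph{every disc $D$ compatible with $\mathcal{D}_{i-1}$}, not merely $D\in\mathcal{D}_{i-1}$; since new discs $D\in\mathcal{D}_i\setminus\mathcal{D}_{i-1}$ are disjoint from (hence compatible with) $\mathcal{D}_{i-1}$, the strengthened hypothesis applies to them. (The paper also carries the auxiliary bookkeeping properties $(A_1)$, $(A_2)$, $(B_1)$, $(B_2)$ to make the decomposition into old and new pieces and the fixed sets of the $\psi_j$'s precise; these substitute for your more informal ``fixes $\mathcal{D}_i$'' claim, which, as stated, conflates fixing $\mathcal{D}_i$ with fixing $h_{i-1}\circ f(\mathcal{D}_{i-1})$ --- these sets coincide only inside $H_0$, and the consistency needs $(B_2)$.) To make your proof correct, you should promote the third bullet in your inductive hypothesis to the compatibility-level statement (3) and verify it at each step; the rest of your argument then goes through as in the paper.
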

\begin{proof}
The construction will be recursive. We illustrate it by considering its first step. Let $\mathcal{D}_0=\{\Delta\}$ and consider $f(\Delta)\cap H_0$, which consists of $i_0$--transverse (in fact $(i_0+1)$--transverse) essential discs because $\Delta$ is an $i_0$--tightening disc. If
$f(\Delta)\cap\Delta=\emptyset$ set $\mathcal{D}_1=\mathcal{D}_0\cup (f(\Delta)\cap H_0)$ and $h_0=\Id$.

If $f(\Delta)\cap\Delta\neq\emptyset$ a push-away is necessary. We claim that $f(\Delta)\cap\Delta$ can be assumed to consist of closed curves. Indeed, recall that $\Delta$ is parallel to a disc $E\in\mathcal{E}$. Therefore
$\partial\Delta$ and $\partial E$ co-bound an annulus in $\partial H_0$. By sliding $\partial\Delta$ along this annulus we can assume $\partial\Delta$ is sufficiently close to $\partial E$ to to ensure that it avoids $f(\Delta)$ (for purposes of understanding push-aways it may be helpful to regard $\Delta$ with $\partial\Delta\subseteq E$, see Remark \ref{rm:parallel}).
Hence $f(\Delta)\cap\partial\Delta=\emptyset$ and $f(\Delta)\cap H_0$ can be pushed away from $\Delta$, yielding
$i_0$--transverse discs (Lemma \ref{L:inherited_push-away}). It is realizable by a framed isotopy $h$ such that $h\circ f(\Delta)\cap\Delta=\emptyset$, see Remark \ref{R:push-away1}. Moreover $h$ does not increase intersection of $f(\Delta)$ with $(\mathcal{H}_{i_0},\hat\nu)$. Now while $h\circ f(\Delta)\cap H_0$ consists of essential discs these need not be incidence discs. By Lemma \ref{L:make_incidence} we obtain a framed isotopy $g$ such that $g(\Delta)=\Delta$ and $g\circ h\circ
f(\Delta)\cap H_0$ consisting of incidence discs disjoint from $\mathcal{D}_0=\{\Delta\}$. Let $\mathcal{D}_1 = \mathcal{D}_0\cup(h_0 \circ f(\Delta)\cap H_0)$, where $h_0=g\circ h$. Moreover applying $g\circ h$ does not increase intersection of $f(\Delta)$ with $(\mathcal{H}_{i_0},\hat\nu)$. We thus obtained $\mathcal{D}_1$ and $h_0$ as in the statement.

To generalize this construction so it can be applied recursively we enlarge the list of properties of $\mathcal{D}_i$ given in the statement. Suppose that there exists $\mathcal{D}_i$ and $h_i$, $0\leq i\leq j$ satisfying:
\begin{enumerate}
    \item $\mathcal{D}_i\cap\mathcal{E}=\emptyset$;
    \item $(h_i\circ f(\mathcal{D}_i))\cap H_0$ consists of $i_0$--transverse incidence discs compatible with
$\mathcal{D}_i$;
    \item if $\{ D\}$ is compatible with $\mathcal{D}_i$ then $\inter{h_i\circ f(D)}{\mathcal{H}_{i_0}}{\hat\nu^{i_0}}\leq
\lambda\cdot\left(\inter{D}{\mathcal{H}_{i_0}}{\hat\nu^{i_0}}\right)$.
\end{enumerate}
Suppose moreover that for any $0\leq i\leq j-1$ the following statements also hold:
\begin{itemize}
    \item[(A$_1$)] $\mathcal{D}_{i+1}=\mathcal{D}_0\cup\left( h_{i}\circ f(\mathcal{D}_{i})\cap H_0\right)$;
    \item[(A$_2$)] $\mathcal{D}_{i+1}=\mathcal{D}_i\cup\mathcal{D}'$, where $\mathcal{D}'=\mathcal{D}_i'$ is disjoint from (hence compatible with) $\mathcal{D}_i$. In particular $\mathcal{D}_{i}\subseteq\mathcal{D}_{i+1}$;
    \item[(B$_1$)] $h_{i+1}=\psi_i\circ h_i$ where $\psi_i$ is a framed isotopy such that,
    \item[(B$_2$)] $\psi_i$ leaves $h_i\circ f(\mathcal{D}_i)$ fixed: $\psi_i\circ h_i\circ f(\mathcal{D}_i)=h_i\circ f(\mathcal{D}_i)$.
\end{itemize}

By setting $i=j$ the equation in (A$_1$) defines a $\mathcal{D}_{j+1}$ for which we will verify the other properties. We shall also need a candidate for $\psi_j$ which defines $h_{j+1}$ by (B$_1$).  First, we prove (A$_2$) for $\mathcal{D}_{j+1}$, defined by (A$_1$):
\[
\mathcal{D}_{j+1}\stackrel{\text{def}}{=}\mathcal{D}_0\cup\left( h_{j}\circ f(\mathcal{D}_{j})\cap H_0\right)\stackrel{\text{(A$_2$)}}{=}
\mathcal{D}_0\cup\left( h_{j}\circ f(\mathcal{D}_{j-1}\cup\mathcal{D}')\cap H_0\right).
\]
But by (B$_1$), $h_{j}=\psi_{j-1}\circ h_{j-1}$:
\begin{multline*}
\mathcal{D}_{j+1}=\mathcal{D}_0\cup\big( h_{j}\circ f(\mathcal{D}_{j-1}\cup\mathcal{D}')\cap H_0\big)\stackrel{\text{(B$_1$)}}{=}
\mathcal{D}_0\cup\big((\psi_{j-1}\circ h_{j-1}\circ f)(\mathcal{D}_{j-1}\cup\mathcal{D}')\cap H_0\big)=\\
\mathcal{D}_0\cup\big((\psi_{j-1}\circ h_{j-1}\circ f)(\mathcal{D}_{j-1})\cap H_0\big)\cup\big((\psi_{j-1}\circ h_{j-1}\circ f)(\mathcal{D}')\cap H_0\big)\stackrel{\text{(B$_2$)}}{=}\\
\mathcal{D}_0\cup\big((h_{j-1}\circ f)(\mathcal{D}_{j-1})\cap H_0\big)\cup\big((\psi_{j-1}\circ h_{j-1}\circ f)(\mathcal{D}')\cap H_0\big)\stackrel{\text{(A$_1$)}}{=}\\
\mathcal{D}_j\cup\big((\psi_{j-1}\circ h_{j-1}\circ f)(\mathcal{D}')\cap H_0\big).
\end{multline*}

Setting $\mathcal{D}''=(\psi_{j-1}\circ h_{j-1}\circ f)(\mathcal{D}')\cap H_0$ yields
\[
\mathcal{D}_{j+1}=\mathcal{D}_j\cup\mathcal{D}'',
\]
where $\mathcal{D}''=\mathcal{D}_j''$ is disjoint from $\mathcal{D}_j$, proving (A$_2$).

Consider then the decomposition
$\mathcal{D}_{j+1}=\mathcal{D}_j\cup\mathcal{D}''$, hence $h_j\circ f(\mathcal{D}_{j+1})=h_j\circ
f(\mathcal{D}_j\cup\mathcal{D}'')=h_j\circ f(\mathcal{D}_j)\cup h_j\circ f(\mathcal{D}'')$. In particular $h_j \circ f(\mathcal{D}'')$ is compatible with $h_j \circ f(\mathcal{D}_j)$. By property (3), if
$D\in\mathcal{D}''$ then $h_j\circ f(D)$ does not increase intersection with $({\mathcal{H}_{i_0}},{\hat\nu^{i_0}})$.

By properties (A$_1$) and (A$_2$) we see that if $h_j\circ f(\mathcal{D}_{j+1})\cap H_0$ is not compatible with
$\mathcal{D}_{j+1}$ then $h_j\circ f(\mathcal{D}'')\cap\mathcal{D}_0\neq\emptyset$, i.e. $h_j\circ
f(\mathcal{D}'')\cap\Delta\neq\emptyset$. In this case push $h_j\circ f(\mathcal{D}'')$ away from $\Delta$. Using Lemma
\ref{L:push-away_reduce} this is realizable by a framed isotopy $h$, so $h\circ h_j\circ
f(\mathcal{D}')\cap\Delta=\emptyset$. Also it does not increase intersection with $(\mathcal{H}_{i_0},\hat\nu)$ and
leaves $h_j\circ f(\mathcal{D}_j)$ fixed. Now $h\circ h_j\circ f(\mathcal{D}')$ consists of a system disjoint from
$\mathcal{D}_j$. Applying
Lemma \ref{L:make_incidence_tool} obtain a framed isotopy $g$ such that  $g\circ h\circ h_j\circ f(\mathcal{D}')$
consists of incidence discs and $g$ fixes $\mathcal{D}_{j+1}$. Therefore $\psi_j=g\circ h$ fixes $h_j\circ
f(\mathcal{D}_j)\subseteq\mathcal{D}_{j+1}$.
Let $h_{j+1}=\psi_j\circ h_j$, hence satisfying (B$_1$) and (B$_2$). The construction yields properties (1)--(3) for $i=j+1$.
\end{proof}

The proposition yields the nested sequence $\{\Delta\}=\mathcal{D}_0\subseteq\mathcal{D}_1\subseteq\cdots\subseteq H_0$ of systems compatible with $\mathcal{E}$. Fix $j\geq 0$ and consider $\mathcal{D}_j \subseteq H_0$, which consists of a system of
$i_0$-transverse incidence discs.
Since there is an upper bound on the number of
isotopy classes of such discs there exists $j=J\in\mathbb{Z}$ which
``stabilizes'' these classes of discs in the following sense: $J$ has
the property that each disc of $\mathcal{D}_{J+1}\subseteq H_0$ is parallel to
one of $\mathcal{D}_J$. We shall fix such a $J$, but another ``stabilization'' property on $J$, which will be
related to intersection with $(\Omega,\nu)$, is required.

To say that $J$ {\em stabilizes} $\mathcal{D}_J\subseteq H_0$ means the following. If $D'$ is a
component of $\mathcal{D}_{J+1}\subseteq H_0$ then either $D'$ is transverse to $\mathcal{H}_0$ or there is a component $D$ of $\mathcal{D}_J$ such that:
\begin{enumerate}
    \item $D'$ is parallel to $D$, and
    \item $\inter{D}{\mathcal{H}_{i_0}}{\hat\nu^{i_0}}\leq\inter{D'}{\mathcal{H}_{i_0}}{\hat\nu^{i_0}}$.
\end{enumerate}

\begin{lemma}\label{L:stable}
There exists $J$ stabilizing $\mathcal{D}_J$.
\end{lemma}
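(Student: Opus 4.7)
The plan is to combine a topological finiteness result with a discreteness-plus-monotonicity argument. The nesting $\mathcal{D}_j\subseteq\mathcal{D}_{j+1}$ (property (A$_2$) in the proof of Proposition \ref{P:main_construction}) drives the whole argument. First I would establish that essential discs in $H_0$ disjoint from $\mathcal{E}$ fall into only finitely many parallel classes $C_1,\dots,C_N$: since $\mathcal{E}$ is complete, $H_0\setminus\mathcal{E}$ is a disjoint union of finitely many 3-balls; any such disc is contained in one such ball $B$, and its isotopy class in $H_0$ is determined by the isotopy class of $\partial D$ in the planar surface $\partial B\cap\partial H_0$ (the sphere $\partial B$ with the 1-handle attaching discs removed), where there are only finitely many isotopy classes of simple closed curves.

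For each class $C_\alpha$, I would set
\[
m_j(C_\alpha)=\min\{\inter{D}{\mathcal{H}_{i_0}}{\hat\nu^{i_0}}\ :\ D\in\mathcal{D}_j,\ [D]=C_\alpha\}
\]
(with $m_j(C_\alpha)=+\infty$ if no such $D$ exists). From $\mathcal{D}_j\subseteq\mathcal{D}_{j+1}$ the sequence $(m_j(C_\alpha))_j$ is non-increasing, and by equation (\ref{eq:intersection-n}) its finite values lie in $\Sigma=\{\sum_{i=1}^{k} a_i\lambda^{i_0}\hat\nu_i : a_i\in\mathbb{Z}_{\geq 0}\}$, which is discrete in $[0,+\infty)$: for any bound $R$ only finitely many tuples $(a_i)$ satisfy $\sum_i a_i\lambda^{i_0}\hat\nu_i\leq R$. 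So each sequence is eventually constant; since there are only $N$ classes, I can pick a single $J$ past the stabilization of all $N$ of them and also past the index at which no further parallel class first appears (the set of represented classes is a non-decreasing subset of a finite set, hence eventually constant).

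For any component $D'\in\mathcal{D}_{J+1}$ of parallel class $C_\alpha$, if $D'$ is transverse to $\mathcal{H}_0$ the first clause of the definition of stabilization holds and we are done. Otherwise $C_\alpha$ is already represented in $\mathcal{D}_J$ by our choice of $J$, and picking $D\in\mathcal{D}_J$ of class $C_\alpha$ realizing $m_J(C_\alpha)$ gives $\inter{D}{\mathcal{H}_{i_0}}{\hat\nu^{i_0}}=m_J(C_\alpha)=m_{J+1}(C_\alpha)\leq\inter{D'}{\mathcal{H}_{i_0}}{\hat\nu^{i_0}}$, supplying both conditions (1) and (2). The main technical obstacle is the topological finiteness of parallel classes of essential discs in $H_0$ disjoint from $\mathcal{E}$, which follows cleanly from the completeness of $\mathcal{E}$; the measure stabilization is then a routine argument about non-increasing sequences in a discrete sub-semigroup of $\mathbb{R}_{\geq 0}$.
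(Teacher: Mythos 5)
Your proof is correct and follows essentially the same route as the paper's (topological finiteness of parallel classes of essential discs disjoint from $\mathcal{E}$, combined with discreteness/well-ordering of the set of possible weighted intersections $\inter{D}{\mathcal{H}_{i_0}}{\hat\nu^{i_0}}$), but it makes explicit what the paper leaves terse: the cut-along-$\mathcal{E}$-into-balls argument for finiteness, the per-class minimum sequences $m_j(C_\alpha)$, and the explicit verification of both clauses of the stabilization definition.
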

\begin{proof}

As already mentioned the first property can be realized. Moreover, for any $j\geq 0$,
the system $\mathcal{D}_j$ consists of $i_0$--transverse discs. But the intersection of such a disc $D$ with
$(\mathcal{H}_{i_0},\hat\nu)$ is an integer and non-negative combination of the entries of $\hat\nu$, which are also all non-negative. Hence the set
$P_D=\{ \inter{D'}{\mathcal{H}_{i_0}}{\hat\nu^{i_0}}\,|\, D'$ isotopic to $D$ and $i_0$--transverse$\}$ is well-ordered. Therefore there exists a stabilizing $J$.
\end{proof}

\begin{obs}\label{obs:framed-isotopy}
If $\mathcal{D}_J\subseteq H_0$ is stabilized then there exists framed isotopy $\psi\colon H\to H$ such that $\psi(\mathcal{D}_{J+1})$ is carried by $\mathcal{E}\cup\mathcal{D}_J\subseteq H_0$, realizing the parallelism. Note $\psi$ thus has support in $H_1$, not necessarily in $H_0$.
\end{obs}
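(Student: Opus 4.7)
The plan is to build $\psi$ componentwise. For each component $D'$ of $\mathcal{D}_{J+1}$, the stabilization of $\mathcal{D}_J$ gives two cases: (i) $D'$ is already transverse to $\mathcal{H}_0$, and hence carried by $\mathcal{E}$; or (ii) $D'$ is parallel to some component $D\in\mathcal{D}_J$. Case (i) requires no action. In case (ii), the disjoint essential discs $D,D'\subseteq H_0$ with isotopic boundary curves in $\partial H_0$ cobound a parallelism ball $B\subseteq H_0$ together with an annulus $A\subseteq\partial H_0$ joining $\partial D$ to $\partial D'$, and we parametrise $B\simeq D^2\times I$ with $D=D^2\times\{0\}$ and $D'=D^2\times\{1\}$.

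The idea is to let $\psi$ slide $D'$ along the $I$--direction into a thin collar $D^2\times[0,\epsilon]$, which we declare to be the 1--handle of $D$ in the handle decomposition of $H_0$ induced by $\mathcal{E}\cup\mathcal{D}_J$. After such a slide, $\psi(D')$ is a dual disc of that 1--handle, hence carried by $\mathcal{D}_J\subseteq\mathcal{E}\cup\mathcal{D}_J$. The subtlety is that $\psi$ must fix $\mathcal{E}_1$ pointwise. Since $\mathcal{E}_1\cap H_0$ is a union of dual discs of $\mathcal{H}_0$ by admissibility, after a preliminary framed isotopy in $H_1$ we may arrange that $\mathcal{E}_1\cap B$ appears as a family of horizontal dual discs $D^2\times\{c_j\}$ of the chosen product structure.

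The main obstacle will be that a naive slide of $D'$ performed entirely inside $B$ drags the horizontal levels $D^2\times\{c_j\}$ along with it, violating the framed condition. To circumvent this, we enlarge $B$ across $A$ into the collar $\overline{H_1-H_0}\simeq\partial H_0\times I$, obtaining $\widetilde{B}\subseteq H_1$ over which the product structure extends; the slide of $D'$ is then routed past each level $D^2\times\{c_j\}$ through this additional room in the collar, leaving $\mathcal{E}_1$ pointwise fixed. This is exactly what the remark alludes to when it states that $\psi$ has support in $H_1$ but not necessarily in $H_0$. Applying the construction to each parallel pair in turn, with pairwise disjoint parallelism balls for distinct components of $\mathcal{D}_{J+1}$, and composing the resulting isotopies, produces the required $\psi$.
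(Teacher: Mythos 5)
The skeleton of your argument is the right one: decompose $\mathcal{D}_{J+1}$ using the stabilization dichotomy, and for each non-transverse component $D'$ slide it through the parallelism region into a dual disc of the $1$--handle of the parallel disc $D\in\mathcal{D}_J$. The reason the remark says the support lies in $H_1$ rather than $H_0$ is also essentially the one you touch on via the annulus $A$: an ambient isotopy supported in $H_0$ is the identity on $H-\mathring{H}_0$, and by continuity it fixes $\partial H_0$ pointwise, whereas realizing the parallelism requires moving $\partial D'$ along $A\subseteq\partial H_0$; so the support must cross $\partial H_0$ and $H_1$ is the smallest $H_n$ containing a collar of $\partial H_0$. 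That observation is really all the remark is recording.

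The paragraph on $\mathcal{E}_1$, however, does not hold up. If a component of $\mathcal{E}_1$ meets $B$ in a dual disc $D^2\times\{c_j\}$ with boundary in $A$, that component continues across $A$ into $\overline{H_1-H_0}$; so after you enlarge $B$ to $\widetilde B = B\cup (A\times[0,1])$ the intersection $\mathcal{E}_1\cap\widetilde B$ is still a properly embedded, separating disc in $\widetilde B$ (roughly $D^2\times\{c_j\}$ capped off by an annulus in the collar). An isotopy supported in $\widetilde B$ that fixes $\mathcal{E}_1\cap\widetilde B$ pointwise cannot carry $D'$ from one side of that separating surface to the other, so ``routing the slide past each level through the collar'' cannot leave $\mathcal{E}_1$ pointwise fixed. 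In other words, if such levels really do separate $D'$ from $D$, your isotopy necessarily moves $\mathcal{E}_1$. Note also that pointwise fixing of $\mathcal{E}_1$ is stronger than Definition~\ref{D:framed-isotopy} requires and stronger than what is used downstream (Remark~\ref{obs:stabilized} and the computation of $\bar M$ only need $\phi(\mathcal{E}_1)=\mathcal{E}_1$ as a system, so that the $\mathcal{E}$--block of the incidence matrix is unchanged). Finally, your closing sentence assumes the parallelism balls for distinct components of $\mathcal{D}_{J+1}$ can be taken pairwise disjoint; if several components are parallel to the same $D\in\mathcal{D}_J$ the balls are nested rather than disjoint, and the slides must be performed compatibly (innermost first), not independently.
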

We have therefore:

\begin{proposition}\label{P:stabilized}
There exists system of $i_0$--transverse incidence discs $\mathcal{D}\subseteq H_0$ and framed isotopy $\phi$ in $H_1$ satisfying the following:
\begin{itemize}
    \item ${\mathcal{D}}\cap\mathcal{E}=\emptyset$;
    \item $\Delta\in\mathcal{D}$ is parallel to $E=E_1\in\mathcal{E}$;
    \item $\inter{\Delta}{\mathcal{H}_{i_0}}{\hat\nu^{i_0}}<\inter{E_1}{\mathcal{H}_{i_0}}{\hat\nu^{i_0}}$;
    \item if $D\in\mathcal{D}$ then $\phi\circ f(D)\cap H_0$ is carried by $\mathcal{E}\cup\mathcal{D}$ and
        \[
        \inter{\phi\circ f(D)}{\mathcal{H}_{i_0}}{\hat\nu^{i_0}}\leq
\inter{f(D)}{\mathcal{H}_{i_0}}{\hat\nu^{i_0}}=\lambda\cdot(\inter{D}{\mathcal{H}_{i_0}}{\hat\nu^{i_0}}).
        \]
\end{itemize}
In particular $\mathcal{E}'=\mathcal{E}\cup\mathcal{D}$ is an admissible disc system (with respect to $\phi\circ f$) containing the tightening disc $\Delta$.
\end{proposition}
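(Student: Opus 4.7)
My plan is to assemble the ingredients produced by the previous constructions. First I apply Proposition \ref{P:main_construction} to obtain the nested sequence $\{\Delta\} = \mathcal{D}_0 \subseteq \mathcal{D}_1 \subseteq \cdots$ of $i_0$-transverse incidence discs together with the framed isotopies $h_i$; then I use Lemma \ref{L:stable} to select an index $J$ stabilizing $\mathcal{D}_J$, and Remark \ref{obs:framed-isotopy} to produce a framed isotopy $\psi$ in $H_1$ such that $\psi(\mathcal{D}_{J+1})$ is carried by $\mathcal{E} \cup \mathcal{D}_J$. I then set $\mathcal{D} := \mathcal{D}_J$ and $\phi := \psi \circ h_J$, and verify the four bullets.

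The first three bullets are essentially immediate. Disjointness $\mathcal{D} \cap \mathcal{E} = \emptyset$ is bullet two of Proposition \ref{P:main_construction}; the containment $\Delta \in \mathcal{D}$ holds because $\mathcal{D}_0 = \{\Delta\} \subseteq \mathcal{D}_J$; that $\Delta$ is parallel to some $E_1 \in \mathcal{E}$ (after relabeling) is built into Definition \ref{D:parallel_tightening_disc}. For the strict measure inequality, both $\Delta$ and $E_1$ are $i_0$-transverse ($\Delta$ by construction; $E_1 \in \mathcal{E}_0$ is $0$-transverse, hence $i_0$-transverse by admissibility), so each weighted intersection with $(\mathcal{H}_{i_0}, \hat\nu^{i_0})$ coincides with its $\nu$-measure, and the tightening condition $\nu(\Delta) < \nu(E_1)$ converts directly into $\inter{\Delta}{\mathcal{H}_{i_0}}{\hat\nu^{i_0}} < \inter{E_1}{\mathcal{H}_{i_0}}{\hat\nu^{i_0}}$.

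The fourth bullet carries the real content. Fix $D \in \mathcal{D}$. By the recursion of Proposition \ref{P:main_construction}, $h_J \circ f(D) \cap H_0$ is a union of $i_0$-transverse incidence discs sitting inside $\mathcal{D}_{J+1}$; applying $\psi$ and invoking Remark \ref{obs:framed-isotopy} makes $\phi \circ f(D) \cap H_0$ carried by $\mathcal{E} \cup \mathcal{D}$. The equality $\inter{f(D)}{\mathcal{H}_{i_0}}{\hat\nu^{i_0}} = \lambda \inter{D}{\mathcal{H}_{i_0}}{\hat\nu^{i_0}}$ is the $\lambda$-eigenvector property applied to $f(D)$ (which is $(i_0+1)$-transverse, hence $i_0$-transverse). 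The inequality $\inter{\phi \circ f(D)}{\mathcal{H}_{i_0}}{\hat\nu^{i_0}} \leq \inter{f(D)}{\mathcal{H}_{i_0}}{\hat\nu^{i_0}}$ combines property (3) of Proposition \ref{P:main_construction} (which bounds $\inter{h_J \circ f(D)}{\mathcal{H}_{i_0}}{\hat\nu^{i_0}}$ by $\lambda \inter{D}{\mathcal{H}_{i_0}}{\hat\nu^{i_0}}$) with the assertion that $\psi$ does not increase weighted intersection; the latter rests on property (2) of stabilization, which ensures that whenever $\psi$ replaces a component $D' \in \mathcal{D}_{J+1}$ parallel to some $D^\ast \in \mathcal{D}_J$ by a dual disc of the new $1$-handle on $D^\ast$, the associated weight does not go up.

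The step I expect to be trickiest is verifying rigorously that $\psi$, whose support lies in $H_1 \supsetneq H_0$, does not increase the $(\mathcal{H}_{i_0}, \hat\nu^{i_0})$-weighted intersection; the resolution should be to choose $\psi$ to act inside the product neighborhoods realizing the parallelism, observing that such neighborhoods meet $H_{i_0}$ only inside the $1$-handles of $\mathcal{E} \cup \mathcal{D}$, so the weighted count is controlled fiberwise. Admissibility of $\mathcal{E}' = \mathcal{E} \cup \mathcal{D}$ with respect to $\phi \circ f$ then comes for free: on $\mathcal{D}$ it is the fourth bullet, and on $\mathcal{E}$ it follows from $\psi$ fixing $\mathcal{E}_1 = f(\mathcal{E})$ pointwise, modulo an auxiliary isotopy absorbing any interior motion that $h_J$ (a framed isotopy of $H_0$) may induce on $f(\mathcal{E}) \cap H_0$ while preserving its isotopy class.
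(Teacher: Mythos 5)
Your proof is correct and follows essentially the same approach as the paper's (much terser) argument: set $\mathcal{D}=\mathcal{D}_J$ for a stabilizing $J$ from Lemma \ref{L:stable}, and $\phi=\psi\circ h_J$ with $\psi$ from Remark \ref{obs:framed-isotopy}; the paper simply cites these ingredients and leaves the bullet-by-bullet verification implicit, whereas you spell out the details and correctly flag the one genuine subtlety --- that $\psi$, having support in $H_1\supsetneq H_0$, must be chosen (using the stabilization inequality $\inter{D}{\mathcal{H}_{i_0}}{\hat\nu^{i_0}}\leq\inter{D'}{\mathcal{H}_{i_0}}{\hat\nu^{i_0}}$ and product neighborhoods realizing the parallelism) so as not to increase weighted intersection, and so that $\phi$ fixes $\mathcal{E}_1$ as required for a framed isotopy of $H_1$ (cf.\ Remark \ref{obs:stabilized}).
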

\begin{proof}
Make $\mathcal{D}=\mathcal{D}_J\subseteq H_0$, where $\mathcal{D}_J$ is stabilized. Also assume without loss of
generality that the
disc $E\in\mathcal{E}=\mathcal{E}_0=\{E_1,\dots, E_k\}$ parallel to $\Delta$ is $E=E_1$. Use Proposition
\ref{P:main_construction} letting $\phi$ be $h_J$ composed with the parallelism that takes $\mathcal{D}_{J+1}$ to be carried by $\mathcal{E}\cup\mathcal{D}=\mathcal{E}\cup\mathcal{D}_J$. Then $\phi$ is a framed isotopy in $H_1$, see Remark \ref{obs:framed-isotopy}.
\end{proof}

\begin{obs}\label{obs:stabilized}
Note that, since $\phi$ is a framed isotopy in $H_1$ then $f(\mathcal{E}_0)=\mathcal{E}_1$ and hence $\phi\circ f(\mathcal{E}_0)=f(\mathcal{E}_0)$.
\end{obs}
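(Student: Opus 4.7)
The plan is to unpack the two equalities in the remark directly from the definitions given earlier in the excerpt.

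For the first equality $f(\mathcal{E}_0)=\mathcal{E}_1$: This is simply the defining equation of $\mathcal{E}_1$. Recall from Subsection \ref{SS:intro:terminology} that once the representative $f$ has been isotoped so that $H_0$ is ``outward expanding and exhausting'', the disc systems $\mathcal{E}_j\subseteq H_j$ are \emph{defined} by $\mathcal{E}_j=f^j(\mathcal{E}_0)$; in particular for $j=1$ this gives $\mathcal{E}_1=f(\mathcal{E}_0)$. No further argument is needed.

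For the second equality $\phi\circ f(\mathcal{E}_0)=f(\mathcal{E}_0)$: I would invoke Definition \ref{D:framed-isotopy}, extended to $H_1$. By that definition, a framed isotopy in $H_1$ is an ambient isotopy of $H$ supported in $H_1$ which fixes $\mathcal{E}_1\subseteq H_1$. Since Proposition \ref{P:stabilized} guarantees that $\phi$ is a framed isotopy in $H_1$, we conclude $\phi|_{\mathcal{E}_1}=\Id|_{\mathcal{E}_1}$. Combining with the first equality, $\phi\circ f(\mathcal{E}_0)=\phi(\mathcal{E}_1)=\mathcal{E}_1=f(\mathcal{E}_0)$, as claimed.

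I do not anticipate an obstacle here; the remark is essentially a bookkeeping observation, useful later to justify that, after pre-composing $f$ with $\phi$, the original system $\mathcal{E}_0$ has the same image as before. The only subtle point worth emphasizing in the write-up is that \emph{$\phi$ is a framed isotopy in $H_1$, not merely in $H_0$}, so that fixation of $\mathcal{E}_1$ (rather than just $\mathcal{E}_0$) is what is actually needed and is available.
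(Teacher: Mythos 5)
Your proposal is correct and is exactly the (implicit) reasoning the paper intends for this remark: $\mathcal{E}_1=f(\mathcal{E}_0)$ is definitional, and a framed isotopy of $H_1$ fixes $\mathcal{E}_1$ by Definition \ref{D:framed-isotopy}, so $\phi\circ f(\mathcal{E}_0)=\phi(\mathcal{E}_1)=\mathcal{E}_1=f(\mathcal{E}_0)$. Your emphasis that it is framedness in $H_1$ (not merely $H_0$) that is needed, and that this is what Proposition \ref{P:stabilized} supplies, is precisely the point of the remark.
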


\subsection{Incidence matrices}\label{S:proof:matrices}

From the construction realized so far we can already obtain a representative of $[f]$ and corresponding disc system
which is more efficient than the original $f$ and $\mathcal{E}$. Verifying that it is indeed more efficient requires some further work.

Obtaining the new representative requires three steps:

1- Using Proposition \ref{P:stabilized} we have that the system $\mathcal{E}'=\mathcal{E}\cup\mathcal{D}$ is admissible
with respect to $\phi\circ f$. Also, $\mathcal{E}$ itself is admissible with respect to $\phi\circ f$.

2- Now let $\varphi$ realize the parallelism between $E_1$ and $\Delta$, in such a way that $\varphi(E_1)=\Delta$. Note
that $\varphi$
is not a framed isotopy. Consider $\hat f=\phi\circ f\circ \varphi$. The system $\mathcal{E}'$ is admissible with
respect to $\hat f$.

3- Pass to an irreducible sub-system $\mathcal{E}'$.

The task then is to verify that $\hat f$ is more efficient than $f$.

We consider the corresponding incidence matrices. Let $M=M(f,\mathcal{E})$ be the incidence matrix of the original pair
$(f,\mathcal{E})$:
\begin{equation}
 M=\left[
\begin{array}{c ccc }
e_{(1,1)} & & \cdots & e_{(1,k)}\\
\vdots & & \ddots  & \vdots \\
e_{(k,1)} & & \cdots & e_{(k,k)}
\end{array}\right]
\end{equation}
Recall that $e_{(i,j)}$ is the number of components of $f(E_i)\cap H_0$ carried by $E_j$.

Now let $\bar f=\phi\circ f$ and consider the incidence matrix $\bar M=\bar M(\bar f,\mathcal{E}\cup\mathcal{D})$:
\begin{equation}
 \bar M=\left[
\begin{array}{c c c | c c c}
e_{(1,1)} & \cdots & e_{(1,k)} & d_{(1,k+1)} & \dots & d_{(1,k+l)} \\
\vdots & \ddots  & \vdots & \vdots &\vdots & \vdots\\
e_{(k,1)} & \cdots & e_{(k,k)} & d_{(k,k+1)} & \dots & d_{(k,k+l)} \\ \cline{1-6}
d_{(k+1,1)} & \cdots & d_{(k+1,k)} & d_{(k+1,k+1)} & \dots & d_{(k+1,k+l)} \\
\vdots & \ddots  & \vdots & \vdots &\vdots & \vdots\\
d_{(k+l,1)} & \cdots & d_{(k+l,k)} & d_{(k+l,k+1)} & \dots & d_{(k+l,k+l)} \\
\end{array}\right]
\end{equation}
Here we are ordering $\mathcal{E}\cup\mathcal{D}=\{E_1,\dots,E_k,D_{k+1},\dots, D_{k+l}\}$, first those of $\mathcal{E}$ then those of $\mathcal{D}$.

Since $\phi$ is a framed isotopy $\bar f|_\mathcal{E}=f|_\mathcal{E}$, therefore the upper left block of $\bar M$ is
$M$.
Moreover $f(\mathcal{E})\cap H_0$ consists only of discs carried by $\mathcal{E}$, therefore there is a zero block in $\bar M$:
\begin{equation}
 \bar M=\left[
\begin{array}{c c c | c c c}
e_{(1,1)} & \cdots & e_{(1,k)} &  & &  \\
\vdots & \ddots  & \vdots &  & 0 &  \\
e_{(k,1)} & \cdots & e_{(k,k)} &  &  &  \\ \cline{1-6}
d_{(k+1,1)} & \cdots & d_{(k+1,k)} & d_{(k+1,k+1)} & \dots & d_{(k+1,k+l)} \\
\vdots & \ddots  & \vdots & \vdots &\vdots & \vdots\\
d_{(k+l,1)} & \cdots & d_{(k+l,k)} & d_{(k+l,k+1)} & \dots & d_{(k+l,k+l)} \\
\end{array}\right]
\end{equation}

The following is a corollary of Proposition \ref{P:stabilized}.
\begin{lemma}\label{L:early_subinvariance}
If $v\in \mathbb{R}^{k+l}$ is the vector whose entries are $v_i=\inter{E_i}{\mathcal{H}_{i_0}}{\hat\nu^{i_0}}>0$ ($1\leq i\leq k$),
$v_i=\inter{D_i}{\mathcal{H}_{i_0}}{\hat\nu^{i_0}}>0$ ($k+1\leq i\leq k+l$), then $\bar M
v\leq\lambda v$.
\end{lemma}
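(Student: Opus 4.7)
The strategy is to verify the inequality $(\bar Mv)_i\leq\lambda v_i$ row by row, exploiting the block structure of $\bar M$ to separate the rows indexed by $\mathcal{E}$ from those indexed by $\mathcal{D}$.

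For $1\leq i\leq k$ (rows from $\mathcal{E}$) the upper-right block of $\bar M$ vanishes, so $(\bar Mv)_i=\sum_{j=1}^{k}e_{(i,j)}v_j$. The disc $E_j\in\mathcal{E}$ is itself a single dual disc of its $1$-handle in $\mathcal{H}_0$, hence (\ref{eq:intersection-0}) gives $\inter{E_j}{\mathcal{H}_0}{\hat\nu}=\hat\nu_j$; since $E_j$ is both $0$- and $i_0$-transverse, the invariance $\inter{E_j}{\mathcal{H}_{i_0}}{\hat\nu^{i_0}}=\inter{E_j}{\mathcal{H}_0}{\hat\nu}$ gives $v_j=\hat\nu_j$ for $j\leq k$. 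Therefore $(\bar Mv)_i=(M\hat\nu)_i=\lambda\hat\nu_i=\lambda v_i$, and the inequality holds with equality.

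For $k+1\leq i\leq k+l$ (rows from $\mathcal{D}$), I unfold the combinatorial meaning of $\bar M_{(i,j)}$ as the number of components of $\bar f(D_i)\cap H_0$ carried by $C_j\in\mathcal{E}\cup\mathcal{D}$, and switch the order of summation:
\[
(\bar Mv)_i=\sum_{j=1}^{k+l}\bar M_{(i,j)}v_j=\sum_{F\subseteq\bar f(D_i)\cap H_0}v_{j(F)},
\]
where $j(F)$ denotes the index of the disc carrying $F$. The crucial observation is that for every such component $F$ one has
\[
\inter{F}{\mathcal{H}_{i_0}}{\hat\nu^{i_0}}=\inter{C_{j(F)}}{\mathcal{H}_{i_0}}{\hat\nu^{i_0}}=v_{j(F)}.
\]
Indeed, by definition of \emph{carried}, $F$ is a dual disc of the $1$-handle of $C_{j(F)}$ in the handle decomposition of $H_0$ determined by $\mathcal{E}\cup\mathcal{D}$, so $F$ and $C_{j(F)}$ cobound a product region inside that $1$-handle. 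Because $C_{j(F)}$ is $i_0$-transverse --- for $j(F)\leq k$ by admissibility of $\mathcal{H}_0$ with $\mathcal{H}_{i_0}$, and for $j(F)>k$ by Proposition \ref{P:stabilized} --- the product structure of the $1$-handle can be chosen compatible with $\mathcal{H}_{i_0}$, so that every dual disc is $i_0$-transverse with the same $\hat\nu^{i_0}$-weighted intersection as $C_{j(F)}$.

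Granting the equality above and summing, additivity of the weighted intersection together with $\bar f(D_i)\cap H_{i_0}=\bigsqcup_F(F\cap H_{i_0})$ yields
\[
(\bar Mv)_i=\sum_F\inter{F}{\mathcal{H}_{i_0}}{\hat\nu^{i_0}}=\inter{\bar f(D_i)}{\mathcal{H}_{i_0}}{\hat\nu^{i_0}}\leq\lambda\inter{D_i}{\mathcal{H}_{i_0}}{\hat\nu^{i_0}}=\lambda v_i,
\]
where the inequality is precisely the last bullet of Proposition \ref{P:stabilized}. I expect the only real subtlety to be the compatibility claim for the product structure on the new $1$-handles coming from $\mathcal{D}$: one must observe (or make explicit in the construction) that since each $D_j$ is an $i_0$-transverse incidence disc, a sufficiently thin product neighborhood $D_j\times I$ can be arranged so that the resulting handle decomposition of $H_0$ determined by $\mathcal{E}\cup\mathcal{D}$ is admissible with $\mathcal{H}_{i_0}$; all other steps are bookkeeping.
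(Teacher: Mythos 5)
Your proof is correct and takes the same route as the paper; the paper's proof is literally just the citation ``Proposition \ref{P:stabilized},'' and what you have written is exactly the bookkeeping the author leaves implicit. Your treatment of the top $k$ rows as an exact equality $(\bar Mv)_i=\lambda v_i$ (using $v_j=\hat\nu_j$ for $j\leq k$) matches the remark the paper makes just after the lemma that the inequality is non-strict on the $\mathcal{E}$-block, and your unfolding identity $(\bar Mv)_i=\inter{\bar f(D_i)}{\mathcal{H}_{i_0}}{\hat\nu^{i_0}}$ is the intended mechanism for converting the last bullet of Proposition \ref{P:stabilized} into the matrix inequality. The one point you flag as a subtlety --- that a component $F$ of $\bar f(D_i)\cap H_0$ carried by some $C_{j(F)}\in\mathcal{E}\cup\mathcal{D}$ satisfies $\inter{F}{\mathcal{H}_{i_0}}{\hat\nu^{i_0}}=v_{j(F)}$ --- is indeed the only step that needs a thought, and your resolution (take the product neighborhoods of the discs in $\mathcal{D}$ thin enough that the handle decomposition of $H_0$ determined by $\mathcal{E}\cup\mathcal{D}$ is admissible with $\mathcal{H}_{i_0}$, which is possible because each disc of $\mathcal{D}$ is $i_0$-transverse) is exactly what is needed; it would be slightly more robust to phrase this as the invariance of $\inter{\cdot}{\Omega}{\nu}$ under isotopy through $I$-fibers of a $1$-handle whose lateral boundary lies in $\partial H_0$, hence is disjoint from $\Omega$, but the content is the same.
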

\begin{proof}
Proposition \ref{P:stabilized}.
\end{proof}

Note that in a sense $\bar f$ is still as efficient as $f$: indeed the inequality $\bar M v\leq\lambda v$ is really
non-strict. That is because $M$ is an irreducible sub-matrix whose growth rate is $\lambda$ (more precisely, $(\bar M
v)_i=\lambda v_i$ for $1\leq i\leq k$).

The real ``tightening'' move is represented by $\varphi$. Assume the last $D_{k+l}\in\mathcal{D}$ is the tightening disc
$\Delta$, parallel to the first the first disc $E_1\in\mathcal{E}$. Consider $\hat f=\bar f\circ \varphi=\phi\circ
f\circ\varphi$ and the corresponding matrix $\hat M$.
The isotopy $\varphi$ affects the incidence matrix $\bar M$ by copying the last line (that corresponding to $\Delta$)
over the first one (that corresponding to $E_1$).

\begin{equation}\label{eq:matrix_2}
 \hat M=\left[
\begin{array}{c c c | c c c}
d_{(k+l,1)} & \cdots & d_{(k+l,k)} & d_{(k+l,k+1)} & \dots & d_{(k+l,k+l)} \\ \cline{1-6}
e_{(2,1)} & \cdots & e_{(2,k)} &  & &  \\
\vdots & \ddots  & \vdots &  & 0 &  \\
e_{(k,1)} & \cdots & e_{(k,k)} &  &  &  \\ \cline{1-6}
d_{(k+1,1)} & \cdots & d_{(k+1,k)} & d_{(k+1,k+1)} & \dots & d_{(k+1,k+l)} \\
\vdots & \ddots  & \vdots & \vdots &\vdots & \vdots\\
d_{(k+l,1)} & \cdots & d_{(k+l,k)} & d_{(k+l,k+1)} & \dots & d_{(k+l,k+l)} \\
\end{array}\right]
\end{equation}
Recall the vector $v\in\mathbb{R}^{k+l}$ (see Lemma \ref{L:early_subinvariance}). It also holds that $\hat M v\leq \lambda v$ because $(\hat M v)_j=(\bar Mv)_j$
for $j>1$ and $(\hat M v)_1< (\bar M v)_1=\lambda v_1$, since $\Delta$ is tightening disc. To complete the construction it is
necessary to pass to an irreducible sub-system, say $\hat{\mathcal{D}}\subseteq\mathcal{E}\cup\mathcal{D}$. We can
easily extract from $v$ the vector $w$ corresponding to $\hat{\mathcal{D}}$ for which, letting $N$ be the corresponding
incidence matrix (with respect to $\hat f$), $N w\leq \lambda w$.  If there is $(N w)_j < \lambda w_j$ for some $j$ then
 $\lambda(\hat f)<\lambda$ by Proposition \ref{P:subinvariance}. The problem in finding such an $j$ is that the
sub-system $\hat{\mathcal{D}}$ may have little to do with $E_1$.

The solution then is to prove that the reduction obtained from $\varphi$, i.e. $(\hat M v)_1< (\bar M v)_1=\lambda v_1$,
on the first disc somehow ``propagates'' to all discs of $\mathcal{E}\cup\mathcal{D}$ through powers of $\hat f$.

\begin{claim}\label{Cl:claim}
For every $1\leq j\leq k+l$ there exists power $p\geq 1$ such that $(\hat M^p v)_j<\lambda^p v_j$.
\end{claim}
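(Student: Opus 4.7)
I plan to prove the claim by the standard Perron--Frobenius ``defect propagation'' trick. Define
\[
\delta^{(p)} := \lambda^p v - \hat M^p v,
\]
which is nonnegative by Lemma \ref{L:subinvariance}(I) applied to the inequality $\hat M v \le \lambda v$ from Lemma \ref{L:early_subinvariance}. A short induction using $\hat M^{p+1}v = \hat M(\hat M^p v)$ yields the recursion $\delta^{(p+1)} = \lambda^p \delta^{(1)} + \hat M\,\delta^{(p)}$, hence
\[
\delta^{(p)} = \sum_{i=0}^{p-1} \lambda^{i}\,\hat M^{p-1-i}\,\delta^{(1)}.
\]
Reading off the entries of $\delta^{(1)}$: since $\hat M$ differs from $\bar M$ only in row 1, and $(\bar M v)_i = \lambda v_i$ for $1\le i \le k$ (as explicitly noted in the text, using that $v$ restricted to $\mathcal{E}$ is the Perron eigenvector $\hat\nu$ of $M$), we have $\delta^{(1)}_i = 0$ for $2\le i\le k$, $\delta^{(1)}_i \ge 0$ for $k+1\le i\le k+l$, and crucially $\delta^{(1)}_1 > 0$: indeed $(\hat M v)_1 = (\bar M v)_{k+l} \le \lambda v_{k+l} < \lambda v_1$, where the last strict inequality is the defining property of the tightening disc $\Delta$ (Proposition \ref{P:stabilized}, $\inter{\Delta}{\mathcal{H}_{i_0}}{\hat\nu^{i_0}} < \inter{E_1}{\mathcal{H}_{i_0}}{\hat\nu^{i_0}}$). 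Consequently $(\delta^{(p)})_j > 0$ whenever $(\hat M^m)_{(j,1)} > 0$ for some $0\le m\le p-1$, so the claim reduces to showing that in the directed graph $G(\hat M)$ of $\hat M$, vertex $1$ is reachable from every vertex $j$.

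For $j=1$ this is trivial. For $2\le j\le k$, rows $2,\dots,k$ of $\hat M$ coincide with the corresponding rows of $\bar M$, which in the $\mathcal{E}$-columns equal the rows of $M$ and vanish in the $\mathcal{D}$-columns (because $\bar M$ has a zero upper-right block, $f(\mathcal{E})\cap H_0$ being carried by $\mathcal{E}$). Irreducibility of $M$ supplies a directed path from $j$ to $1$ in $G(M)$; a shortest such path avoids vertex $1$ except at its endpoint, so it uses only rows $2,\dots,k$, which are common to $G(M)$ and $G(\hat M)$. Hence $j$ also reaches $1$ in $G(\hat M)$.

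The remaining case, $k+1\le j\le k+l$, is the main obstacle. Let $T\subseteq\{1,\dots,k+l\}$ be the set of vertices from which $1$ is reachable in $G(\hat M)$; by the preceding cases $\{1,\dots,k\}\subseteq T$. The complement $T^c$ is forward-invariant in $G(\hat M)$: any outgoing edge from $j\in T^c$ must land in $T^c$, for otherwise $j$ would reach $1$ through that edge. Assuming toward contradiction $T^c\ne\emptyset$, we get $T^c\subseteq\{k+1,\dots,k+l\}$ and the $T^c$-rows of $\hat M$ (equivalently of $\bar M$) have zeros in the $\{1,\dots,k\}$-columns. Geometrically, for every $D_j\in\mathcal{D}$ with $j\in T^c$, each component of $(\phi\circ f)(D_j)\cap H_0$ is carried only by $\mathcal{D}$-discs again indexed by $T^c$, and this persists under iteration.

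To close the argument I intend to exploit the construction of $\mathcal{D}=\mathcal{D}_J$ in Proposition \ref{P:main_construction} (forward iteration of $\phi\circ f$ starting at $\Delta$, together with the stabilization/minimality properties of $\Delta$), and the fact --- coming from irreducibility of the original matrix $M$ --- that $(\Omega,\nu)$ has full support. The persistent trapping in $T^c$ would yield an $\bar f$-invariant subsystem of discs carried entirely by $T^c$, with associated growth rate $\le\lambda$, that completely avoids $E_1$ in the incidence structure. Combined with full-support/minimality of $(\Omega,\nu)$ and the minimality of $\inter{\Delta}{\mathcal{H}_{i_0}}{\hat\nu^{i_0}}$ fixed at the start of Subsection \ref{S:proof:main}, some iterate of a $T^c$-disc must have a component carried by $E_1$, producing an edge from $T^c$ into $\{1,\dots,k\}$ and contradicting forward-invariance. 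The technical heart --- turning the combinatorial trap into a geometric contradiction via the minimality properties of $\Delta$ and the stabilization of $\mathcal{D}_J$ --- is where the real difficulty lies; once that step is in place the three cases combine to prove the claim.
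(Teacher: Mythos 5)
Your framework is a clean rephrasing: setting $\delta^{(p)} = \lambda^p v - \hat M^p v \ge 0$ and expanding $\delta^{(p)} = \sum_{i=0}^{p-1} \lambda^i \hat M^{p-1-i}\delta^{(1)}$ correctly reduces the claim to showing that vertex $1$ is reachable from every vertex $j$ in the directed graph of $\hat M$, given $\delta^{(1)}_1 > 0$. Your handling of $j=1$ (immediate) and $2\le j\le k$ (shortest path in $G(M)$ avoids vertex $1$ internally, hence survives the row-$1$ change) is correct. The identification $v|_{\mathcal{E}} = \hat\nu$ and the verification $(\bar M v)_i = \lambda v_i$ for $1\le i\le k$ are also correct.

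However, the case $k+1\le j\le k+l$ --- which you rightly flag as ``the technical heart'' --- is not resolved, and this is a genuine gap, not a routine omission. The route you sketch (a forward-invariant trap $T^c$ contradicting full support of $(\Omega,\nu)$ or the minimality of $\inter{\Delta}{\mathcal{H}_{i_0}}{\hat\nu^{i_0}}$) is not the mechanism that makes the claim true, and it is not clear it could be made to work: the $T^c$-block of $\hat M$ records the incidence of $\phi\circ f$ on the new discs $\mathcal{D}$, and full support of $(\Omega,\nu)$ (which comes from irreducibility of the \emph{original} $M$) gives no direct leverage over this block. The reason reachability actually holds is geometric and specific: $\mathcal{D}$ consists of $i_0$--transverse discs, so $f^{h_0}(\mathcal{D})\cap H_0$ (with $h_0 = -i_0$) is carried by $\mathcal{E}$. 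But this statement concerns iterates of $f$, not of $\hat f = \phi\circ f\circ\varphi$, so it does not directly give an edge into $\{1,\dots,k\}$ in $G(\hat M)$. The paper resolves this by building the auxiliary matrix $W$ on the larger collection $\mathcal{F} = \bigcup_{0\le i\le h_0} f^i(\mathcal{D})\cap H_0$, where the $i_0$--transversality makes the path to $\mathcal{E}$ visible (Lemma \ref{L:first_move}), and then transfers the strict inequalities back to $\hat M$ through the chain $T_0 \to T_1 \to T_2 \to T_3 = \hat M$ using Lemma \ref{L:subinvariance} and Corollary \ref{C:submatrix}. Without this intermediary (or an equivalent device translating the geometric transversality fact into edges of $G(\hat M)$ itself), your argument does not close.
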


Therefore even after passing to the sub-system $\hat{\mathcal{D}}$ a reduction on the growth can be detected. Assuming
the claim we can prove Theorem \ref{Th:tightness}.

\begin{proof}[Proof of Theorem \ref{Th:tightness}]
We prove the contrapositive. If $f$ is not tight we obtain $\hat f=\phi\circ f\circ\varphi$ isotopic to $f$ and the matrix $\hat M$ constructed before. But the growth $\lambda(\hat f)$ of $\hat f$ is that $\lambda(N)$ of an irreducible submatrix $N$ of $\hat M$.
By Corollary \ref{C:submatrix} and Claim \ref{Cl:claim}, $\lambda(\hat f)=\lambda(N)<\lambda(f)$. Therefore $\hat f$ is more efficient than $f$, which is then not efficient.
\end{proof}

\subsection{Tracking the growth}\label{S:proof:tracking}
As seen so far it suffices to prove Claim \ref{Cl:claim} to obtain Theorem \ref{Th:tightness}.
We then go some steps back and obtain $\hat M$ in an alternative away, keeping track of its growth.

We shall build the following matrix $W$, which is an incidence matrix but not really of a disc system. Let
$\mathcal{S}^i=f^i(\mathcal{D})\cap H_0$, $0\leq i\leq h_0=-i_0$. If $i=0$, $1$ then $\mathcal{S}^i$ is a disc system,
because $\mathcal{D}$ consists of incidence discs. If $i=h_0$ that is also the case because $\mathcal{D}$ consists of
$i_0$--transverse discs (in fact we identify $\mathcal{S}^{h_0}$ with $\mathcal{E}$). But $\mathcal{S}^i$ need not
consist os essential discs\footnote{That could easily be fixed by applying the argument on Lemma \ref{L:make_incidence}
inductively, down on the levels $H_{i+1}-\mathring{H}_{i}$. Something similar is sketched in Remark \ref{rm:parallel}.}
for other values of $i$. But that is fine for our purposes: these other surfaces have an intermediate role in the
argument and will eventually be discarded. Still, we assume by general position that every $\mathcal{S}^i$ consists of embedded surfaces $S^{i}_{j}\subseteq H_0$.

Let
\[
\mathcal{F}=\bigcup_{0\leq i\leq h_0} \mathcal{S}^i=\{ S^{h_0}_1,\dots, S^{h_0}_l, S^{h_0-1}_{1},\dots
S^{h_0-1}_{k_1},\dots\dots, S^{0}_{1}, \dots, S^0_{k_{h_0}}\},
\]
where the $S^j_i$'s are ordered lexicographically: decreasing with $j$ and increasing with $i$.
\[
\begin{array}{cccc}
S^{h_0}_1,\dots, S^{h_0}_l &\in & \mathcal{S}^{h_0}=\mathcal{E}&=f^{h_0}(\mathcal{D}) \\
S^{h_0-1}_{1},\dots S^{h_0-1}_{k_1} &\in & \mathcal{S}^{h_0-1}&=f^{h_0-1}(\mathcal{D}) \\
S^{h_0-2}_{1},\dots S^{h_0-2}_{k_2} &\in& \mathcal{S}^{h_0-2}&=f^{h_0-2}(\mathcal{D}) \\
\vdots & &\vdots &\\
S^0_{1},\dots,S^0_{k_{h_0}} &\in& \mathcal{S}^{0}&=\mathcal{D}.
\end{array}
\]
Even if each $\mathcal{S}^i$ is a ``system of surfaces'' the whole $\mathcal{F}$ is not because distinct
$\mathcal{S}^i$'s may intersect. We shall consider the incidence matrix of this collection of embedded surfaces with
respect to $f$. It is clear from the construction that $f(\mathcal{S}^i)\cap H_0=\mathcal{S}^{i+1}$.

Recall our immediate goal is to consider the incidence matrix $W$ of $\mathcal{F}$ (as a collection of properly embedded
surfaces in $H_0$) with respect to $f$. Rows and columns of $W$ follow the lexicographic order of the $S^j_i$'s.
Therefore rows and columns of $W$ also follow the order of the $\mathcal{S}^j$'s decreasingly: the first ones correspond
to
$\mathcal{S}^{h_0}=\mathcal{E}$, then to $\mathcal{S}^{h_0-1}$ and so fourth, so the last ones correspond to
$\mathcal{S}^0=\mathcal{D}$. We then obtain $W$ as a ``block'' matrix. Below we illustrate how such a matrix looks like,
in a case where $\Delta$ is an $(-4)$--tightening disc with height $h_0=4$ and
$\mathcal{F}=\mathcal{S}^4\cup\mathcal{S}^3\cup\mathcal{S}^2\cup\mathcal{S}^1\cup\mathcal{S}^0$.
\begin{equation}\label{eq:matrix_3}
 W=\left[
\begin{array}{ccc|c|ccc|c|ccc}
  &   & &   &   &   & & & & & \\
  & M & & 0 &   &   & & & & & \\
  &   & &   &   &   & & & & & \\ \cline{1-3}
  & * & &   &   & 0 & & & & & \\ \cline{1-4}
  &   & &   &   &   & &0& & & \\
  & * & & * &   &   & & & & & \\
  &   & &   &   &   & & & &0& \\ \cline{1-7}
  & * & & 0 &   & * & & & & & \\ \cline{1-3} \cline{4-11}
  &   & &   &   &   & & & & & \\
  & * & & 0 &   & 0 & &*& &0& \\
  &   & &   &   &   & & & & &
\end{array}\right]
\end{equation}
The first ``block'' consists of the incidence matrix of the original system $\mathcal{E}=\mathcal{S}^{h_0}$.
Since $\mathcal{E}$ is admissible (i.e. $f(\mathcal{E})\cap H_0\subseteq\mathcal{E}$) to the right of $M$ (as a
submatrix of $N$)
all entries are zero.

The next surfaces $S\in\{S^{h_0-1}_{1},\dots,S^{h_0-1}_{k_1}\}= \mathcal{S}^{h_0-1}$ have the property that
$f(S)\cap H_0\subseteq\mathcal{E}$,
which accounts to the non-zero block (in $W$) just below $M$, and zeros to its right. More generally, if
$S\in\mathcal{S}^{i}$ then $f(S)\cap H_0$ consists of discs in $\mathcal{E}$ or surfaces in $\mathcal{S}^{i+1}$, so $W$
has zeros
all above the diagonal to the right of $M$, where this diagonal may be regarded as a ``block diagonal'': with the
exception
of the submatrix $M$, all entries whose row and column correspond to surfaces in the same $\mathcal{S}^i$ are zeros.
In fact non-zero entries on a row corresponding to a disc in $\mathcal{S}^i$ appear only at the first columns
(those corresponding to the original disc system $\mathcal{E}$) or at those corresponding to discs in the following
$\mathcal{S}^{i+1}$. We obtained:

\begin{lemma}\label{L:block-matrix}
The incidence matrix $W$ of $\mathcal{F}=\bigcup_{0\leq i\leq h_0} \mathcal{S}^i$ with respect to $f$
has the form described in (\ref{eq:matrix_3}) above.
\end{lemma}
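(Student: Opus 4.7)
The plan is to treat the lemma as a bookkeeping statement about the block structure of zeros in $W$, and to deduce it in two passes: one for the block row corresponding to $\mathcal{E} = \mathcal{S}^{h_0}$, and one for the remaining block rows. Recall that the ordering of $\mathcal{F}$ is decreasing in the superscript, so $\mathcal{S}^{h_0}$ indexes the topmost/leftmost block, $\mathcal{S}^{h_0 - 1}$ the next, and $\mathcal{S}^0$ the bottommost/rightmost.

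First I would verify the upper-left block. Under the identification $\mathcal{S}^{h_0} = \mathcal{E}$, the block of $W$ with both row and column indexed by $\mathcal{E}$ is, by definition, the incidence matrix $M = M(f, \mathcal{E})$. Admissibility of $\mathcal{E}$ says that $f(\mathcal{E}) \cap H_0$ is carried by $\mathcal{E}$ itself, so no component of $f(E_i) \cap H_0$ can be carried by any surface of $\mathcal{F} \setminus \mathcal{E}$. This forces every entry in the $\mathcal{E}$-row block to the right of $M$ to vanish, yielding the top block row of the displayed matrix.

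For the remaining block rows, the decisive ingredient is the identity
\[
f(\mathcal{S}^i) \cap H_0 = \mathcal{S}^{i+1}, \qquad 0 \leq i < h_0,
\]
which I would derive directly from the definition $\mathcal{S}^i = f^i(\mathcal{D}) \cap H_0$: since $f$ is a homeomorphism, $f(\mathcal{S}^i) = f^{i+1}(\mathcal{D}) \cap H_1$, and intersecting with $H_0 \subseteq H_1$ produces $f^{i+1}(\mathcal{D}) \cap H_0 = \mathcal{S}^{i+1}$. Consequently every component of $f(S) \cap H_0$, for $S \in \mathcal{S}^i$, is a component of $\mathcal{S}^{i+1}$, and can therefore contribute to an entry $W_{(S, T)}$ only when $T$ lies in $\mathcal{S}^{i+1}$, or when $T \in \mathcal{E} = \mathcal{S}^{h_0}$ (which is only relevant in the boundary case $i+1 = h_0$, or when a component of $\mathcal{S}^{i+1}$ happens to coincide with a disc of $\mathcal{E}$). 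Translating through the decreasing ordering, this produces all the claimed zero blocks: in the row block of $\mathcal{S}^i$, the block-diagonal position (where $T \in \mathcal{S}^i$) is zero, and every column block to the right of the $\mathcal{S}^{i+1}$ column block (namely those indexed by $\mathcal{S}^j$ with $j \leq i$, $j \neq 0$ excepted only when $i=0$) is zero as well.

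I do not expect any genuinely hard step. The only subtlety is pinning down the ``carried by'' relation for target surfaces $T \in \mathcal{S}^j$ that are not essential discs; the natural convention here is that a component $C$ of $f(S) \cap H_0$ is carried by $T$ when $C$ is a component of $T$. With that convention the two observations above suffice, and the $*$ positions in the displayed matrix are not asserted to be non-zero by the lemma but merely permitted by the argument. The whole lemma is organizational, preparing $W$ for the spectral manipulation carried out in Section \ref{S:proof:tracking}.
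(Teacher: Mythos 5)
Your proposal is correct and follows the same route as the paper: the paper also derives the block form from the two facts that $\mathcal{E}=\mathcal{S}^{h_0}$ is admissible (killing the entries to the right of $M$ in the first block row) and that $f(\mathcal{S}^i)\cap H_0=\mathcal{S}^{i+1}$ (so that, for $S\in\mathcal{S}^i$, the only column blocks which can receive nonzero entries are $\mathcal{S}^{i+1}$ and $\mathcal{E}$). Your one-line justification of $f(\mathcal{S}^i)\cap H_0=\mathcal{S}^{i+1}$, which the paper merely calls ``clear from the construction,'' is a helpful addition, and your observation that the lemma asserts only the zero blocks — with the $*$ positions merely permitted — is exactly the right reading.
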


Now focus on $\mathcal{F}=\bigcup_{0\leq i\leq h_0} \mathcal{S}^i$ as a whole, meaning that for a while it will not
matter much in which $\mathcal{S}^i$ a surface of $\mathcal{F}$ is contained. Enumerate
$\mathcal{F}=\{ F_1,\dots, F_{m}\}=\{ S^{h_0}_1,\dots, S^{h_0}_l, \dots\dots, S^{0}_{1}, \dots, S^0_{k_{h_0}}\},$
following the lexicographic order (hence $m=k+l_1+\cdots+ k_{h_0}$).
Let $u\in\mathbb{R}^{m}$ be the vector:
\[
 u_i=\inter{F_i}{\mathcal{H}_{i_0}}{\hat\nu^{i_0}}\geq0.
\]
Since $W$ is the incidence matrix of $f$, whose growth rate is $\lambda$, $u$ is a $\lambda$--eigenvector of $W$:
\begin{equation}\label{eq:block-matrix}
 Wu=\lambda u.
\end{equation}

Recall the nested $\mathcal{D}_k\subseteq \mathcal{D}_{k+1}$ obtained from the original tightening disc $\Delta$
($\mathcal{D}_0=\{\Delta\}$) and that $\mathcal{S}^{0}=\mathcal{D}=\mathcal{D}_K$, where $\mathcal{D}_K$ is stabilized.
It also
contains $\Delta$, which we assume to be $\Delta=F_{m}\in\mathcal{S}^0=\mathcal{D}$, the last disc of $\mathcal{F}$.
Therefore the last row and column of $W$ correspond to $\Delta$ (the column, naturally, being zero). Also:
\[
 u_{m}=\inter{\Delta}{\mathcal{H}_{i_0}}{\hat\nu^{i_0}}.
\]
Recall we are assuming that $F_1=E_1\in\mathcal{E}$ is the disc of the original system which is parallel to the tightening
disc $\Delta$.
Therefore
\begin{equation}\label{eq:tightening}
u_{m}<u_1=\inter{F_1}{\mathcal{H}_{i_0}}{\hat\nu^{i_0}}.
\end{equation}

The general idea of the argument is to perform a sequence of operations on  $W$ related to $\hat f=\phi\circ
f\circ\varphi$ so that, after passing to a submatrix, we obtain $\hat M$. The growth is controlled in the process.

First obtain $T=T_0$ from $W$ by copying its last line over the first.
\begin{equation}
T= T_0=\left[
\begin{array}{ccc|c|ccc|c|ccc}
           & *    &             &   &   &   & &*& &  &  \\ \cline{1-3} \cline{8-8}
 e_{(2,1)} & \cdots & e_{(2,k)} & 0 &   &   & & & & & \\
 \vdots    & \ddots & \vdots    &   &   & 0 & & & & & \\
 e_{(k,1)} & \cdots & e_{(k,k)} &   &   &   & &0& & & \\ \cline{1-3}
  & * & &   &   &   & & & &0& \\ \cline{1-4}
  &   & &   &   &   & & & & & \\
  & * & & * &   &   & & & & & \\
  &   & &   &   &   & & & & & \\ \cline{1-7}
  & * & & 0 &   & * & & & & & \\ \cline{1-3} \cline{4-11}
      &       &       &   &   &   & & &       &   &       \\
      &   *   &       & 0 &   & 0 & &*&       & 0 &       \\
      &       &       &   &   &   & & &       &   &
\end{array}\right]
\end{equation}
Recall the vector $u\in\mathbb{R}^{m}$:
\[
 u_i=\inter{F_i}{\mathcal{H}_{i_0}}{\hat \nu}\geq 0,
\]
previously built, where $\mathcal{F}=\{ F_1,\dots, F_{m}\}$ is the collection of surfaces.

\begin{lemma}\label{L:first_move}
In addition to $ T_0 u\leq \lambda u$ the following holds:
for every $1\leq j\leq m$ there exists a power $p$ such that $(T_0^p u)_j< \lambda^p u_j$.
\end{lemma}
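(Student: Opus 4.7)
The plan is to first verify $T_0 u \le \lambda u$ with strict inequality only at coordinate $1$, then to track how that single strict inequality propagates to every coordinate under iteration of $T_0$. Since $T_0$ is $W$ with the last row copied over the first, and $Wu=\lambda u$ by (\ref{eq:block-matrix}), we get $(T_0 u)_j = (Wu)_j = \lambda u_j$ for $j \neq 1$, while $(T_0 u)_1 = (Wu)_m = \lambda u_m < \lambda u_1$ by (\ref{eq:tightening}). Setting $\epsilon := \lambda(u_1 - u_m) > 0$, this reads $T_0 u = \lambda u - \epsilon\, e_1$, where $e_1$ denotes the first standard basis vector.

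A routine induction on $p$ then yields
\[
T_0^p u \;=\; \lambda^p u \;-\; \epsilon \sum_{i=0}^{p-1} \lambda^{p-1-i}\, T_0^{\,i} e_1.
\]
Since $T_0$ is non-negative each $T_0^i e_1 \ge 0$, so $T_0^p u \le \lambda^p u$; moreover $(T_0^p u)_j < \lambda^p u_j$ holds for some $p \ge 1$ if and only if $(T_0^{\,i})_{(j,1)} > 0$ for some $i \ge 0$, that is, vertex $j$ can reach vertex $1$ in the directed graph whose adjacency matrix is $T_0$. The lemma thus reduces to a reachability statement on that graph.

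For $j=1$ take $i=0$. For $j\in\{2,\dots,k\}$ (so $F_j\in\mathcal{E}$), the block structure of Lemma \ref{L:block-matrix} shows that rows of $T_0$ indexed by $\{2,\dots,k\}$ vanish outside the first $k$ columns, so the submatrix of $T_0$ on indices $\{1,\dots,k\}$ is a matrix $\bar M$ obtained from the irreducible $M$ by altering only its first row. Lemma \ref{L:propagation_matrix} then yields $(\bar M^{q(j)})_{(j,1)} = (M^{q(j)})_{(j,1)} > 0$ for the appropriate exponent $q(j)$; since a walk in $\bar M$'s graph is literally a walk in $T_0$'s graph through $\{1,\dots,k\}$, this delivers $(T_0^{q(j)})_{(j,1)}>0$. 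For $j$ with $F_j\in\mathcal{S}^l$, $l<h_0$, the same block structure forces every outgoing edge of $j$ (which must be present since $(T_0u)_j=\lambda u_j>0$) to land in $\mathcal{E}\cup\mathcal{S}^{l+1}$, so iterating at most $h_0-l$ times produces a vertex in $\mathcal{E}$, and the previous case then closes the chain. The main obstacle is precisely the middle case: one must identify the correct $\bar M$ inside $T_0$ and argue that reachability in $\bar M$ genuinely lifts to reachability in $T_0$, but Lemma \ref{L:block-matrix} makes this automatic since rows indexed by $\{2,\dots,k\}$ in $T_0$ cannot exit $\mathcal{E}$ in one step.
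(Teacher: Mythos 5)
Your proof is correct and takes essentially the same approach as the paper: both establish the single strict inequality at coordinate $1$, use Lemma \ref{L:propagation_matrix} to propagate it to the indices of $\mathcal{E}$ via the first-row-altered submatrix $\bar M$ sitting in the top-left block of $T_0$, and then use the block structure of Lemma \ref{L:block-matrix} to reach $\mathcal{E}$ in at most $h_0-\ell$ steps from any index in $\mathcal{S}^\ell$. Your explicit identity $T_0^p u = \lambda^p u - \epsilon \sum_{i=0}^{p-1} \lambda^{p-1-i} T_0^i e_1$ and the resulting reduction to reachability of vertex $1$ is a cleaner packaging of what the paper does by comparing the $j$--rows of $T_0^q$ and $W^q$ directly, but the underlying case analysis and the ingredients used are the same.
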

\begin{proof}
Let $T=T_0$. The first assertion ($T u\leq \lambda u$) is clear: from (\ref{eq:tightening}), for $j=1$
\[
(T u)_1=(Wu)_{m}=\lambda u_{m}<\lambda u_1.
\]
(In fact this already proves the second assertion for $j=1$, in which case $p=1$).

For $2\leq j\leq m$ the $j$--row of $T$ equals that of $W$, therefore from (\ref{eq:block-matrix}) above $(T
u)_j=Wu_j=\lambda u_j$, proving the first assertion.

We now prove the second assertion. The general idea is that the reduction $\lambda^{-1}(T u)_1<u_1$
``propagates'' to all entries by taking powers. Geometrically it comes from the fact that every disc
of $\mathcal{F}$, when iterated by increasing powers of $f$, eventually intersects $H_0$ in a parallel copy
of $D_1=E_1\in\mathcal{E}$ (the disc corresponding to the first row of $W$). In the combinatorics of the matrix this is
related to the following: 1- the incidence matrix $M$ of the original system $\mathcal{E}$ is irreducible and 2- the
blocks of $W$ are as described in Lemma \ref{L:block-matrix}.

We make it precise, beginning with the first entries of $u$, those corresponding to discs of the original
system $\mathcal{E}$, i.e. $1\leq j\leq l$. We have already proved it for $j=1$.

Assume $2\leq j\leq l$. An induction argument (on $q$) like the one in Lemma \ref{L:propagation_matrix} proves the
following.
Fix $q\geq 1$. If for any $1\leq p< q$ the $j$--row of $T^p$ equals that of $W^p$ and its first column is zero
then the $j$--row of $T^q$ equals that of $W^q$. But the upper left block of $W^q$ is $M^q$, while the upper right
consists of zeros.
For $1\leq j\leq l$ recall the integer function
$q(j)$ of Lemma \ref{L:propagation_matrix} corresponding to the irreducible matrix $M$ and fix $q=q(j)$.
By using Lemma \ref{L:propagation_matrix} we obtain that for any $2\leq j\leq l$, the $j$--row
of $T^q$ equals that of $W^q$, which consists of the $j$--row of $M^q$ followed by zeros.
Put another way, $(T^q u)_j=(W^q u)_j=\lambda u_j$. Moreover the first entry of the $j$--row
of $T^q$ is not zero.

Now let $p=q+1$ and consider $(T^p u)_j=(T^q\cdot T u)_j$. We have already proved that $T u\leq\lambda u$
and $(T u)_1<\lambda u_1$ therefore
\[
 (T^p u)_j=(T^q\cdot T u)_j<\lambda^p u_j.
\]
This proves the lemma for every $1\leq j\leq l$. It still remains to check the case $l<j\leq m$.

Let $F=F_j\in\mathcal{F}$ be the corresponding surface. Then $F\in\mathcal{S}^k=f^k(\mathcal{D})\cap H_0$ for some
$0\leq k< h_0$.
Let $r=h_0-k$. Therefore $f^r(F)$ intersects $H_0$ in discs of $\mathcal{S}^{h_0}=\mathcal{E}$. In terms of the matrix
that means
that $T^r$ has a non-zero entry on the $j$--row and some $i$--column for $1\leq i\leq l$. We already proved that there
exists power $p$ such that $(T^p u)_i<\lambda^p u_i$. Therefore:
\[
(T^{r+p} u)_j=(T^r\cdot T^p u)_j<\lambda^{r+p} u_j.
\]
\end{proof}

\begin{obs}
We can regard $T_0$ as the incidence matrix of the collection of surfaces $\mathcal{F}$ with respect to a
$f\circ\varphi$. Indeed $f\circ\varphi(F_1)=f\circ\varphi(E_1)=f(\Delta)=f(F_m)$. The other surfaces are not affected by the introduction of $\varphi$.
\end{obs}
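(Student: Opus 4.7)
The plan is to verify the claim row by row, by comparing the incidence matrix of $\mathcal{F}=\{F_1,\ldots,F_m\}$ with respect to $f\circ\varphi$ against $T_0$, which was defined operationally from $W$ by overwriting its first row with its $m$-th row. The entire content of the claim sits in the interplay between $\varphi$ and the distinguished pair $(F_1,F_m)=(E_1,\Delta)$.

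For the first row the computation is immediate from the way $\varphi$ was set up in Subsection \ref{S:proof:matrices}: $\varphi$ realizes the parallelism between $E_1$ and $\Delta$ with $\varphi(E_1)=\Delta$, so
\[
(f\circ\varphi)(F_1)=(f\circ\varphi)(E_1)=f(\Delta)=f(F_m).
\]
Consequently, for every $j$, the number of components of $(f\circ\varphi)(F_1)\cap H_0$ carried by $F_j$ equals the number of components of $f(F_m)\cap H_0$ carried by $F_j$, which is the entry $W_{m,j}$; this is precisely the new first row installed in $T_0$.

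For rows $i\geq 2$ the task is to show $(f\circ\varphi)(F_i)$ has the same incidence pattern with $\mathcal{F}$ as $f(F_i)$ does. The idea is to exploit the freedom in choosing $\varphi$: one may take $\varphi$ supported in an arbitrarily thin product neighborhood of a parallelism region $E_1\times[0,1]\subseteq H_0$ connecting $E_1$ to $\Delta$. Such a $\varphi$ is the identity on $\mathcal{E}\setminus\{E_1\}$ and, more importantly, preserves $H_0$ setwise so that $(f\circ\varphi)(F_i)\cap H_0=f(\varphi(F_i))\cap H_0$. Since $\varphi$ is ambient-isotopic to the identity through an isotopy supported inside $H_0$, the set $f(\varphi(F_i))\cap H_0$ is ambient-isotopic to $f(F_i)\cap H_0$ via an isotopy that carries the decomposition $\mathcal{F}$ to itself up to incidence. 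Combinatorial incidence is invariant under such isotopies, so the count of components carried by each $F_j$ is unchanged. Hence the $i$-th row of the incidence matrix of $\mathcal{F}$ with respect to $f\circ\varphi$ agrees with $W_{i,*}=(T_0)_{i,*}$.

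The main obstacle I expect is formalizing the incidence-invariance for rows $i\geq 2$: one must confirm that the chosen $\varphi$ really is ``small enough'' that no surface $F_i$ (for $i\neq 1$) has its incidence with the collection $\mathcal{F}$ altered. This reduces to a standard transversality argument, using that $\varphi$ is isotopic to the identity inside a product region that can be chosen to meet each $F_i$ (with $i\neq 1,m$) in an arbitrarily controlled way, and that the carrier relation for a fixed collection $\mathcal{F}$ is stable under ambient isotopies supported away from the essential features of $\mathcal{F}$.
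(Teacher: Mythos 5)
Your verification for the first row is correct and matches the paper's justification: $\varphi(E_1)=\Delta$ gives $(f\circ\varphi)(F_1)=f(\Delta)=f(F_m)$, so the first row of the new incidence matrix is the $m$--th row of $W$, which is precisely how $T_0$ is built. For the rows $i\geq 2$, however, the step you invoke --- that ``combinatorial incidence is invariant under such isotopies'' --- is not true as stated and is a genuine gap. Carried-component counts, that is, the entries of the incidence matrix, are \emph{not} preserved by an arbitrary ambient isotopy of a surface: an isotopy supported in $H_0$ can push pieces of a surface across $\partial H_{-1}$, into or out of the $1$--handles, and change the count. Moreover $\varphi$ is explicitly \emph{not} a framed isotopy (it moves $E_1$), so no framed-isotopy invariance applies; and the conjugated isotopy connecting $f(F_i)$ to $f(\varphi(F_i))$ is supported in $H_1$, not $H_0$, and does not preserve the collection $\mathcal{F}$ either.

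The argument the remark is actually resting on is more elementary: one arranges that $\varphi$ fixes each $F_i$, $i\geq 2$, \emph{pointwise}, so that $(f\circ\varphi)(F_i)=f(F_i)$ is an equality of subsets and there is nothing to compare. This requires the support of $\varphi$ --- a product region $P\simeq E_1\times I$ realizing the parallelism to $\Delta$, together with a thin collar --- to be disjoint from $\mathcal{F}\setminus\{E_1,\Delta\}$. For the remaining essential discs of $\mathcal{E}\cup\mathcal{D}$ that disjointness is routine: they are properly embedded discs disjoint from $E_1$, from $\Delta$, and from the parallelism annulus in $\partial H_0$, and an innermost-curve argument pushes them off $P$. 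For the intermediate surfaces $\mathcal{S}^j$, $0<j<h_0$, a further adjustment is still needed and is not spelled out in the remark; you correctly flagged this as the main obstacle, but the resolution should be a pointwise-fixing statement about the support of $\varphi$, not an isotopy-invariance principle for carried counts.
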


Now change the last rows of $T_0$, those corresponding to the disc system $\mathcal{S}^0=\mathcal{D}$. Recall $\phi$,
the framed isotopy such that $\phi\circ f(\mathcal{D})\cap H_0$ consists of discs of $\mathcal{E}\cup\mathcal{D}$. By
recording the incidence pattern of $\phi\circ f(\mathcal{D})\cap H_0$ we obtain a matrix $T_1$:
\begin{equation}\label{eq:matrix_8}
 T_1=\left[
\begin{array}{ccc|c|ccc|c|ccc}
           & *    &             &   &   &   & &*& &  &  \\ \cline{1-3} \cline{8-8}
 e_{(2,1)} & \cdots & e_{(2,k)} & 0 &   &   & & & & & \\
 \vdots    & \ddots & \vdots    &   &   & 0 & & & & & \\
 e_{(k,1)} & \cdots & e_{(k,k)} &   &   &   & &0& & & \\ \cline{1-3}
  & * & &   &   &   & & & &0& \\ \cline{1-4}
  &   & &   &   &   & & & & & \\
  & * & & * &   &   & & & & & \\
  &   & &   &   &   & & & & & \\ \cline{1-7}
  & * & & 0 &   & * & & & & & \\ \cline{1-3} \cline{4-11}
d_{(k+1,1)} & \cdots & d_{(k+1,k)} &   &   &   & & & d_{(k+1,k+1)} & \dots & d_{(k+1,k+l)} \\
\vdots      & \ddots & \vdots      & 0 &   & 0 & &0& \vdots        &\ddots & \vdots \\
d_{(k+l,1)} & \cdots & d_{(k+l,k)} &   &   &   & & & d_{(k+l,k+1)} & \dots & d_{(k+l,k+l)}
\end{array}\right]
\end{equation}
where the lower non-zero blocks are obtained from (\ref{eq:matrix_2}).

\begin{lemma}\label{L:preserveI}
The matrix $T_1$ and the vector $u\in\mathbb{R}^m$ have the properties that
\begin{enumerate}
    \item $T_1 u\leq\lambda u$,
    \item for any $1\leq j\leq m$ there exists power $p\geq 1$ such that $(T_1^p u)_j<\lambda u_j$.
\end{enumerate}
\end{lemma}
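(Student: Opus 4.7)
The plan is to show $T_1 u \le T_0 u$ pointwise and then let Lemma \ref{L:subinvariance}(II)(III), combined with what Lemma \ref{L:first_move} already proved about $T_0$, deliver both conclusions at once.

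First I would compare the two matrices row by row. By construction $T_1$ agrees with $T_0$ outside the rows indexed by the discs of $\mathcal{D}=\mathcal{S}^0$, so for such $j$ trivially $(T_1 u)_j = (T_0 u)_j$. Fix now $j$ corresponding to a disc $D_j\in\mathcal{D}$. The $j$-th row of $T_1$ is supported on the columns of $\mathcal{E}\cup\mathcal{D}$, with entries $d_{(j,i)}$ counting the components of $\phi\circ f(D_j)\cap H_0$ carried by the $i$-th disc of $\mathcal{E}\cup\mathcal{D}$ (Proposition \ref{P:stabilized}). Since each such component is an $i_0$-transverse disc parallel to a core of $\mathcal{E}\cup\mathcal{D}$, the ``counting measure with weights'' formula (\ref{eq:intersection-n}) gives
\[
(T_1 u)_j \;=\; \sum_i d_{(j,i)}\,u_i \;=\; \inter{\phi\circ f(D_j)}{\mathcal{H}_{i_0}}{\hat\nu^{i_0}} \;\le\; \lambda\,u_j,
\]
the last inequality being again Proposition \ref{P:stabilized}. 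On the other hand, row $j$ of $T_0$ equals row $j$ of $W$ (only the first row was altered in going from $W$ to $T_0$), and $Wu = \lambda u$ by equation (\ref{eq:block-matrix}), so $(T_0 u)_j = \lambda u_j$. Hence $(T_1 u)_j \le (T_0 u)_j$ in this case too, and $T_1 u \le T_0 u$ holds throughout.

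With $T_1 u \le T_0 u$ and $T_0 u \le \lambda u$ (from Lemma \ref{L:first_move}) in hand, I would invoke Lemma \ref{L:subinvariance}(II) with $M = T_0$, $N = T_1$, $v = u$ to obtain $T_1^p u \le T_0^p u \le \lambda^p u$ for every $p\ge 1$. Taking $p=1$ proves (1). For (2), Lemma \ref{L:first_move} furnishes, for each $1\le j\le m$, a power $p = p_j \ge 1$ with $(T_0^{p_j} u)_j < \lambda^{p_j} u_j$; Lemma \ref{L:subinvariance}(III) then upgrades this to $(T_1^{p_j} u)_j < \lambda^{p_j} u_j$, which is the required strict inequality.

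The step that will demand the most care is the identity $(T_1 u)_j = \inter{\phi\circ f(D_j)}{\mathcal{H}_{i_0}}{\hat\nu^{i_0}}$ when $j$ lies in the block of $\mathcal{D}$. This reduces to checking that any component $C$ of $\phi\circ f(D_j)\cap H_0$ carried by a disc $F\in\mathcal{E}\cup\mathcal{D}$ contributes exactly $u_{i(F)}$ to the weighted intersection, which follows from the $i_0$-transversality of $\mathcal{E}\cup\mathcal{D}$ together with the invariance of $\inter{\,\cdot\,}{\mathcal{H}_{i_0}}{\hat\nu^{i_0}}$ under admissible isotopy of a disc within its $1$-handle. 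Once this accounting is in place, the rest is a mechanical application of the matrix lemmas of Subsection \ref{SS:intro:matrices}.
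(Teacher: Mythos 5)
Your proposal is correct and follows the same route as the paper: reduce to $T_1 u\le T_0 u$, dispatch the rows outside the $\mathcal{D}$ block trivially, and for the $\mathcal{D}$ block invoke the inequality $\inter{\phi\circ f(D)}{\mathcal{H}_{i_0}}{\hat\nu^{i_0}}\le\lambda\cdot\inter{D}{\mathcal{H}_{i_0}}{\hat\nu^{i_0}}$ of Proposition \ref{P:stabilized} (which the paper packages as Lemma \ref{L:early_subinvariance}, $\bar M v\le \lambda v$, and then matches the relevant rows of $T_1$ and entries of $u$ to those of $\bar M$ and $v$), then finish with Lemmas \ref{L:first_move} and \ref{L:subinvariance}. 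The only difference is that you explicitly unwind the bookkeeping identity $(T_1 u)_j=\inter{\phi\circ f(D_j)}{\mathcal{H}_{i_0}}{\hat\nu^{i_0}}$ to its geometric content (parallel copies carried by an $i_0$-transverse disc contribute equal weight), which the paper leaves implicit inside Lemma \ref{L:early_subinvariance}; both amount to the same verification.
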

\begin{proof}
Note that by Lemma \ref{L:first_move} and Lemma \ref{L:subinvariance} it suffices to prove that $T_1 u\leq T_0 u$.

If $j$ does not correspond to a disc of $\mathcal{S}^0=\mathcal{D}$ then the $j$--row of $T_1$ equals that of $T_0$
and hence $(T_1 u)_j=(T_0u)_j$.

Therefore assume $j$ corresponds to a disc of $\mathcal{D}$. In this case we go back to the inequality $\bar M v\leq
\lambda v$ of Lemma \ref{L:early_subinvariance} (see its statement for the definition of $v$). Note that the last entries
of $v\in\mathbb{R}^{k+l}$ and $u\in\mathbb{R}^m$, $m>k+l$, coincide, those corresponding to discs of $\mathcal{D}$. Also
for the first entries, those corresponding to $\mathcal{E}$. By also recalling the relation between the lower blocks of
(\ref{eq:matrix_2}) and (\ref{eq:matrix_8}) it is clear that if $v_i$ correspond to the same disc as $u_j$ then $T_1
u_j=\bar M v_i$. But $\bar M v_i\leq\lambda v_i=\lambda u_j$ completing the proof.
\end{proof}

Now obtain $T_2$ by copying the last line of $T_1$ over its first.

\begin{equation}
 T_2=\left[
\begin{array}{ccc|c|ccc|c|ccc}
 d_{(k+l,1)} & \cdots & d_{(k+l,k)} &   &   &   & & & d_{(k+l,k+1)} & \dots & d_{(k+l,k+l)}\\ \cline{1-3} \cline{9-11}
 e_{(2,1)} & \cdots & e_{(2,k)} & 0 &   &   & & & & & \\
 \vdots    & \ddots & \vdots    &   &   & 0 & & & & & \\
 e_{(k,1)} & \cdots & e_{(k,k)} &   &   &   & &0& & & \\ \cline{1-3}
  & * & &   &   &   & & & &0& \\ \cline{1-4}
  &   & &   &   &   & & & & & \\
  & * & & * &   &   & & & & & \\
  &   & &   &   &   & & & & & \\ \cline{1-7}
  & * & & 0 &   & * & & & & & \\ \cline{1-3} \cline{4-11}
d_{(k+1,1)} & \cdots & d_{(k+1,k)} &   &   &   & & & d_{(k+1,k+1)} & \dots & d_{(k+1,k+l)} \\
\vdots      & \ddots & \vdots      & 0 &   & 0 & &0& \vdots        &\ddots & \vdots \\
d_{(k+l,1)} & \cdots & d_{(k+l,k)} &   &   &   & & & d_{(k+l,k+1)} & \dots & d_{(k+l,k+l)}
\end{array}\right]
\end{equation}

\begin{lemma}\label{L:preserveII}
The matrix $T_2$ and the vector $u\in\mathbb{R}^m$ have the properties that
\begin{enumerate}
    \item $T_2 u\leq\lambda u$,
    \item for any $1\leq j\leq m$ there exists power $p\geq 1$ such that $(T_2^p u)_j<\lambda u_j$.
\end{enumerate}
\end{lemma}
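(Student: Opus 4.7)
The plan is to mimic the proof of Lemma \ref{L:preserveI} one level up. There the argument reduced the statement for $T_1$ to the coordinate-wise inequality $T_1 u \leq T_0 u$ and then invoked Lemma \ref{L:subinvariance}; I would reduce the statement for $T_2$ to the analogous inequality $T_2 u \leq T_1 u$, and then invoke Lemma \ref{L:subinvariance}(II, III) with Lemma \ref{L:preserveI} supplying the necessary hypothesis on $T_1$. Once $T_2 u \leq T_1 u$ is established, both parts follow directly.

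The heart of the argument is therefore the row-by-row verification of $T_2 u \leq T_1 u$. For $2 \leq j \leq m$, the $j$-row of $T_2$ coincides with that of $T_1$ (since the only change in going from $T_1$ to $T_2$ is overwriting the first row), so $(T_2 u)_j = (T_1 u)_j$. The only delicate case is $j = 1$: by construction $(T_2 u)_1 = (T_1 u)_m$, while $(T_1 u)_1 = (T_0 u)_1$, because the passage from $T_0$ to $T_1$ modifies only the $l$ rows corresponding to $\mathcal{D}$, leaving the first row fixed. Using the identity $(T_0 u)_1 = (W u)_m = \lambda u_m$ already exploited in Lemma \ref{L:first_move}, I get $(T_1 u)_1 = \lambda u_m$. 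Applying Lemma \ref{L:preserveI}(1) to the $m$-coordinate then yields $(T_1 u)_m \leq \lambda u_m = (T_1 u)_1$, which is exactly the inequality needed at row $1$.

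With $T_2 u \leq T_1 u$ in hand, part (1) is immediate from Lemma \ref{L:preserveI}(1) combined with Lemma \ref{L:subinvariance}(II), which gives $T_2^p u \leq T_1^p u \leq \lambda^p u$ for every $p \geq 1$ (in particular for $p = 1$). For part (2), given any $1 \leq j \leq m$, I would first apply Lemma \ref{L:preserveI}(2) to produce a power $p \geq 1$ with $(T_1^p u)_j < \lambda^p u_j$, and then invoke Lemma \ref{L:subinvariance}(III) to transfer the strict inequality to $(T_2^p u)_j \leq (T_1^p u)_j < \lambda^p u_j$. The only real obstacle I anticipate is the bookkeeping carried out in the second paragraph: tracking that the ``tightening budget'' produced by copying the last row of $W$ over its first (yielding $T_0$) survives the subsequent modification $T_0 \to T_1$, so that the first row of $T_1$ still evaluates to exactly $\lambda u_m$ against $u$. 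Everything else is a formal application of the matrix lemmas from Subsection \ref{SS:intro:matrices}.
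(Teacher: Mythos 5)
Your proof is correct and follows the same overall structure as the paper's: reduce to the coordinatewise inequality $T_2 u \leq T_1 u$, observe that rows $j > 1$ are unchanged, and then establish the row-$1$ inequality before appealing to Lemma \ref{L:subinvariance}.

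The difference is in how the row-$1$ inequality is justified. The paper identifies the two relevant entries geometrically --- $(T_1 u)_1 = \inter{f(\Delta)}{\mathcal{H}_{i_0}}{\hat\nu^{i_0}}$ and $(T_2 u)_1 = \inter{\phi\circ f(\Delta)}{\mathcal{H}_{i_0}}{\hat\nu^{i_0}}$ --- and invokes the fourth bullet of Proposition \ref{P:stabilized} directly. You instead work algebraically: $(T_2 u)_1 = (T_1 u)_m$ (first row of $T_2$ is the last row of $T_1$), $(T_1 u)_1 = (T_0 u)_1 = (W u)_m = \lambda u_m$, and then $(T_1 u)_m \leq \lambda u_m$ is read off from Lemma \ref{L:preserveI}(1). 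Both routes are valid and rest on the same foundation (Lemma \ref{L:preserveI} for row $m$ ultimately traces back to Lemma \ref{L:early_subinvariance}, hence to Proposition \ref{P:stabilized}), but yours has the small advantage of not re-invoking the geometric proposition, reusing instead what has already been packaged into the algebraic statement of Lemma \ref{L:preserveI}. This makes the argument for $T_2$ more tightly parallel to the argument for $T_1$.
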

\begin{proof}
As in Lemma \ref{L:preserveI} (and using it) it suffices to show that $T_2 u\leq T_1 u$. For $j>1$ it is evident that
$(T_2 u)_j=(T_1 u)_j$.

Let $j=1$. Recall that the first and last lines of the matrices correspond to the discs $F_1=E_1\in\mathcal{E}$,
$F_m=\Delta\in\mathcal{D}$. Moreover
\[
(T_1 u)_1=\inter{f(\Delta)}{\mathcal{H}_{i_0}}{\hat\nu^{i_0}}
\]
and
\[
(T_2 u)_1=\inter{\phi\circ f(\Delta)}{\mathcal{H}_{i_0}}{\hat\nu^{i_0}}.
\]
Direct application of Proposition \ref{P:stabilized} yields $(T_2 u)_1\leq(T_1 u)_1$.
\end{proof}

Erase all lines and columns of $T_2$ but those corresponding to
$\mathcal{S}^{h_0}\cup\mathcal{S}^0=\mathcal{E}\cup\mathcal{D}$, obtaining $T_3$. Clearly $T_3=\hat M$, see
(\ref{eq:matrix_2}). Being a submatrix of $T_2$, from the vector $u\in\mathbb{R}^m$
we extract the corresponding vector $v\in\mathbb{R}^{k+l}$ (which is the same considered in Lemma
\ref{L:early_subinvariance} and Claim \ref{Cl:claim}).

\begin{proof}[Proof of Claim \ref{Cl:claim}]
Considering $\hat M=T_3$ is a submatrix of $T_2$, with $v\in\mathbb{R}^{k+l}$ the vector extracted from
$u\in\mathbb{R}^m$, apply Lemma \ref{L:preserveII} and Corollary \ref{C:submatrix}.
\end{proof}

\bibliographystyle{amsalpha}
\bibliography{reference_leo}

\end{document}